\definecolor{darkgreen}{rgb}{0.0, 0.7, 0.0}
\definecolor{purple}{rgb}{0.5, 0.0, 0.5}
\definecolor{red}{rgb}{0.8, 0.2, 0.0}
\newtheorem{thm}{Theorem}[section]
\newtheorem{bthm}{Theorem}
\newtheorem{bcor}{Corollary}
\newtheorem{lemma}[thm]{Lemma}
\newtheorem{prop}[thm]{Proposition}
\newtheorem{claim}[thm]{Claim}
\numberwithin{equation}{section}
\theoremstyle{definition}
\newtheorem{defi}[thm]{Definition}
\newtheorem{notation}[thm]{Notation}
\theoremstyle{remark}
\newtheorem{remark}[thm]{Remark}
\newtheorem{example}[thm]{Example}
\newcommand{\Z}{\mathbb{Z}}
\newcommand{\C}{\mathbb{C}}
\newcommand{\Q}{\mathbb{Q}}
\newcommand{\R}{\mathbb{R}}
\newcommand{\Pic}{\operatorname{Pic}}
\DeclareMathOperator{\Hom}{{Hom}}
\DeclareMathOperator{\Ext}{{Ext}}
\DeclareMathOperator{\Spec}{Spec \:}
\def \Im{{\rm Im}}
\def \P{\mathbb{P}}
\def \F{\mathcal F}
\def\I{{\mathcal J}}
\def \L{\mathcal L}
\def \E{\mathcal E}
\def \G{\mathcal G}
\def \H{\mathcal H}
\def\O{\mathcal O}
\def\M0{\mathcal M^0}
\def\mapright#1{\smash{\mathop{\longrightarrow}\limits^{#1}}}
\newcommand{\SHom}{{\mathcal{H}om}}
\newcommand{\SExt}{{\mathcal{E}xt}}
\newcommand{\Id}{{\operatorname{Id}}}
\DeclareMathOperator{\depth}{{depth}}
\DeclareMathOperator{\codim}{{codim}}
\DeclareMathOperator{\Sing}{{Sing}}
\DeclareMathOperator{\Coker}{{Coker}}
\DeclareMathOperator{\Ker}{{Ker}}
\DeclareMathOperator{\Bs}{{Bs}}
\def\B{\mathbf{B}}
\newcommand{\rk}{\operatorname{rank}}
\begin{document}

\title[On the connectedness of some degeneracy loci and of Ulrich subvarieties]{On the connectedness of some degeneracy loci and of Ulrich subvarieties}

\author[V. Buttinelli, A.F. Lopez, R. Vacca]{Valerio Buttinelli*, Angelo Felice Lopez and Roberto Vacca**}

\address{\hskip -.43cm Valerio Buttinelli, Dipartimento di Matematica ``Guido Castelnuovo", Sapienza Universit\`a di Roma, Piazzale Aldo Moro 5, 00185 Roma, Italy. email: {\tt valerio.buttinelli@uniroma1.it}}

\address{\hskip -.43cm Angelo Felice Lopez, Dipartimento di Matematica e Fisica, Universit\`a di Roma
Tre, Largo San Leonardo Murialdo 1, 00146, Roma, Italy. e-mail {\tt angelo.lopez@uniroma3.it}}

\address{\hskip -.43cm Roberto Vacca, Dipartimento di Matematica, Universit\`a di Roma Tor Vergata, Via della Ricerca Scientifica, 00133 Roma, Italy. email: {\tt vacca@mat.uniroma2.it}}

\thanks{*The author thanks the ``Progetti di Avvio alla Ricerca 2024" of the University of Rome La Sapienza}

\thanks{The second and third authors were partially supported by the GNSAGA group of INdAM} 

\thanks{**Work supported by the MIUR Excellence Department Project MatMod@TOV awarded to the Department of Mathematics of the University of Rome Tor Vergata.}

\thanks{{\it Mathematics Subject Classification} : Primary 14J60. Secondary 14F06}

\begin{abstract} 
We study connectedness of degeneracy loci $D_{r-k}(\varphi)$ of morphisms $\varphi : \O_X^{\oplus (r+1-k)} \to \E$, where $\E$ is a rank $r$ globally generated bundle on a smooth $n$-dimensional variety $X$ and $k \le 3$. For $k \le 2$ we give a characterization of connectedness in terms of vanishing of Chern classes. Moreover we prove that they are connected, for $k \le \min\{2, r-1,n-1\}$, if $\E$ is V-big. In the case of Ulrich bundles more precise results are given, both in general and in the case of surfaces.
\end{abstract}

\maketitle

\section{Introduction}

Let $\phi : E \to F$ be a morphism of vector bundles on a smooth complex variety $X$. A fundamental theorem of Fulton and Lazarsfeld \cite{fl} establishes, under the hypothesis that $\SHom(E,F)$ is ample, that the degeneracy loci of $\phi$ are nonempty if they have non-negative expected dimension and are connected if they have positive expected dimension. A few years later, Tu \cite{t} and Steffen \cite{ste} proved a similar result assuming $q$-ampleness of $\SHom(E,F)$. Several results have been proved since then, see for example \cite{d, fu, lay, ln}.

In the absence of some kind of ampleness property, things are more complicated. For example, when $\SHom(E,F)$ is just big and globally generated, degeneracy loci can be disconnected \cite{ln} also when they have positive expected dimension (even in the Ulrich case, see Example \ref{ese} inspired by \cite{ln}).  

In this paper we consider, for $1 \le k \le \min\{r,n\}$, injective morphisms 
$$\varphi : \O_X^{\oplus (r+1-k)} \to \E$$
where $\E$ is a rank $r$ globally generated bundle, and their degeneracy loci
$$D_{r-k}(\varphi)=\{x \in X : \rk \varphi(x) \le r-k\}.$$
which, as is well known, when $\varphi$ is general, are nonempty if and only if $c_k(\E) \ne 0$. 

We first give a small generalization, in this case, of \cite[Thm.~II(a)]{fl} in terms of stable base loci (that measure the positivity of $\E$): if $\B_+(\E) \ne X$, then $D_{r-k}(\varphi)$ is not empty, see Proposition \ref{b+}.

The next natural question to ask is that if one can determine connectedness of $D_{r-k}(\varphi)$ (as a matter of fact, since degeneracy loci are normal, equivalently their irreducibility). 

We can give an answer for some $k$'s, as follows.

\begin{bthm} \hskip 3cm
\label{connesse}

Let $X$ be a smooth irreducible projective variety of dimension $n$. Let $\E$  be a rank $r$ globally generated bundle on $X$ such that $c_k(\E) \ne 0$ for some integer $k$ and let $s=h^0(\E^*)$, so that $r \ge k+s$. Consider morphisms $\varphi : \O_X^{\oplus (r+1-k)} \to \E$ such that the degeneracy loci $D_{r-k}(\varphi)$ are reduced of pure codimension $k$. Also, consider the following conditions:
\begin{itemize}
\item[(i)] $c_{k+1}(\E)=0$.
\item [(ii)] The degeneracy loci $D_{r-k}(\varphi)$ as above have at least $r+1-k-s$ connected (or irreducible) components.
\item[(iii)] The degeneracy loci $D_{r-k}(\varphi)$ as above are disconnected.
\end{itemize} 
If $k \in \{1, 2\}$ we have: (i) implies (ii), (iii) implies (i), and if $r \ge k+s+1$, then (i), (ii) and (iii) are equivalent. If $k=3$ and $H^1(\O_X)=0$, then (i) implies (ii). 

Moreover, under one of the following conditions, for $k \in \{1, 2\}$, the degeneracy loci $D_{r-k}(\varphi)$ as above are connected:
\begin{itemize}
\item[(iv)] $k \le \min\{r-1,n-1\}$ and $\B_+(\E) \ne X$ (that is $\E$ is V-big).
\item [(v)] $\varphi$ is a general morphism and $D_{r-k}(\varphi)$ is singular.
\end{itemize} 
\end{bthm}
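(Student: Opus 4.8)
The plan is to derive both (iv) and (v) from the single assertion $c_{k+1}(\E) \neq 0$ and then invoke the implication (iii) $\Rightarrow$ (i) already proved for $k \in \{1,2\}$. Its contrapositive says precisely that $c_{k+1}(\E) \neq 0$ forces $D_{r-k}(\varphi)$ to be connected, so in each case it is enough to produce a nonzero $(k+1)$-st Chern class. Note that $k \leq \min\{r-1,n-1\}$ in (iv), and the genericity of $\varphi$ in (v), guarantee that $c_{k+1}(\E)$ lives in the expected range $1 \leq k+1 \leq \min\{r,n\}$, so that the criterion recalled in the Introduction applies: for a general morphism, a degeneracy locus of the expected dimension is nonempty if and only if the corresponding Chern class is nonzero.

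For (iv) I would simply apply Proposition \ref{b+} one step deeper. A general $\psi : \O_X^{\oplus(r-k)} \to \E$ has degeneracy locus $D_{r-(k+1)}(\psi)$ of non-negative expected dimension $n-k-1$ and class $c_{k+1}(\E)$. Since $\B_+(\E) \neq X$ is a hypothesis on $\E$ alone, Proposition \ref{b+} applies verbatim with $k+1$ in place of $k$ and yields $D_{r-(k+1)}(\psi) \neq \emptyset$; by the criterion above, $c_{k+1}(\E) \neq 0$, and connectedness follows.

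For (v) the point is to convert the word \emph{singular} into the non-emptiness of the next degeneracy locus. For a general $\varphi$ the locus where $\varphi$ has rank exactly $r-k$ is smooth, so $\Sing D_{r-k}(\varphi) \subseteq D_{r-k-1}(\varphi)$; as $D_{r-k}(\varphi)$ is of pure codimension $k$, its being singular forces $D_{r-k-1}(\varphi) \neq \emptyset$. I would then discard one of the $r+1-k$ summands of the source to obtain a sub-morphism $\varphi' : \O_X^{\oplus(r-k)} \to \E$. At every point $x$ the image of $\varphi'(x)$ lies inside that of $\varphi(x)$, so $\rk \varphi'(x) \leq \rk \varphi(x)$ and hence $D_{r-k-1}(\varphi) \subseteq D_{r-k-1}(\varphi')$; in particular $D_{r-k-1}(\varphi') \neq \emptyset$. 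Since truncating the source of a general morphism again gives a general morphism, the criterion applies to $\varphi'$ and gives $c_{k+1}(\E) \neq 0$, whence connectedness.

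The two steps requiring care are these inputs. For (iv) one must check that Proposition \ref{b+} is really available at level $k+1$ under the same V-bigness hypothesis --- this is immediate, since its proof uses only $k+1 \leq \min\{r,n\}$ and the positivity of $\E$. For (v) the main obstacle is the classical genericity statement that the rank-exactly-$(r-k)$ locus of a general $\varphi$ is smooth (so that singularity of $D_{r-k}(\varphi)$ detects $D_{r-k-1}(\varphi)$); this rests on the global generation of $\E$, which makes the universal morphism transverse to the determinantal stratification, together with the elementary observation that a sub-morphism of a general morphism stays general.
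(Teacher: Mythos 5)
Your proposal only addresses the ``Moreover'' clause of the theorem, i.e.\ parts (iv) and (v). The main body of the statement --- that (i) implies (ii), that (iii) implies (i) for $k\in\{1,2\}$, the resulting equivalence when $r\ge k+s+1$, and the $k=3$ case --- is nowhere proved: you explicitly take ``(iii) $\Rightarrow$ (i) already proved'' as an input, which is circular unless you supply that argument. In the paper this is where essentially all the work lies. The implication (i) $\Rightarrow$ (ii) is Lemma \ref{connesse1}: one uses $c_{k+1}(\E)=0$ to realize $D_{r-k}(\varphi)$ as the zero locus of a section of a rank-$k$ quotient bundle $\F$, dualizes the Eagon--Northcott/Koszul data to get $0\to\G\to V_0^*\otimes\O_X\to\O_W\to0$, and bounds $h^0(\O_W)$ from below after proving the cohomological vanishing $H^1(\H^*)=0$ (via Lemma \ref{c3no}(ii) for $k\le2$, and via extra hypotheses, e.g.\ $H^1(\E(-\det\E))=0$ deduced from $H^1(\O_X)=0$, for $k=3$). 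The converse (iii) $\Rightarrow$ (i) is a genuinely geometric argument: cut down to a $(k+1)$-fold, use Proposition \ref{conn} to transfer disconnectedness to a general member, and then apply Bertini ($k=1$) or Lemma \ref{zeta1} ($k=2$), where the divisor $Y\in|\det\E|$ and the identity $c_3(\E)H^{n-3}=Z^2H_{|Y}^{n-3}$ convert disconnectedness of $Z$ into $(Z')^2=0$ and hence $c_3(\E)=0$. None of this is recoverable from what you wrote, so as a proof of the stated theorem the proposal has a substantial gap.

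What you do prove is correct. For (iv) your route (nonemptiness of $D_{r-(k+1)}(\psi)$ via Proposition \ref{b+}, then $c_{k+1}(\E)\ne0$ via Lemma \ref{gg}) is a minor detour from the paper, which invokes Proposition \ref{b+}(i) directly to get $c_{k+1}(\E)\ne0$ and contradict (i); the two are the same in substance. For (v) your argument is genuinely different and arguably cleaner: the paper computes $[D_{r-k-1}(\varphi)]=c_{k+1}(\E)^2-c_k(\E)c_{k+2}(\E)$ by Porteous and derives a contradiction from the simultaneous vanishing of $c_{k+1}$ and $c_{k+2}$, whereas you restrict $\varphi$ to a corank-one sub-morphism $\varphi'$, observe $D_{r-k-1}(\varphi)\subseteq D_{r-k-1}(\varphi')$, and read off $c_{k+1}(\E)\ne0$ directly from Lemma \ref{gg}; this avoids Porteous entirely. (Two small points you should make explicit: singularity of a reduced, pure codimension $k=n$ locus is impossible, so $k+1\le n$ automatically in (v); and $\Sing(D_{r-k}(\varphi))=D_{r-k-1}(\varphi)$ for general $\varphi$ is exactly Lemma \ref{red}.) Still, these observations do not repair the omission of the first half of the theorem.
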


Note that, for any $k$, the number of connected components is constant, as soon as we require that $D_{r-k}(\varphi)$ is reduced of pure codimension $k$, see Proposition \ref{conn}.

We point out that the inequality $r \ge k+s+1$, needed to get disconnectedness in (ii), is sharp, see Remark \ref{bana}. Also observe that, in many cases, we can show that the number of connected components is exactly $r+1-k-s$ (see Lemmas \ref{numcc}, \ref{sharp} and Remark \ref{numcc2}).

We are especially interested in the case when $\E$ is an Ulrich vector bundle on a smooth variety $X \subset \P^N$, namely $\E$ satisfies the vanishings $H^i(\E(-p))=0$ for $1 \le p \le \dim X$. Ulrich bundles have emerged as an important class of vector bundles to be studied (see for example \cite{be, es, cmrpl}). 

In the case $k=2$ when $\E$ is an Ulrich bundle, the degeneracy loci $D_{r-2}(\varphi)$ are known as Ulrich subvarieties (see \cite{lr1} or Definition \ref{usv}) and their existence is equivalent by \cite[Thm.~1]{lr1} to the existence of Ulrich bundles, which is still a widely open and considerably studied problem.

We first give the following classification of Ulrich bundles with empty Ulrich subvarieties. 

\begin{bthm} \hskip 3cm
\label{c2=0}

Let $X \subset \P^N$ be a smooth irreducible variety of dimension $n \ge 2$, degree $d \ge 2$ and let $\E$ be a rank $r \ge 2$ Ulrich bundle on $X$. The following are equivalent:
\begin{enumerate}[(i)]
\item All Ulrich subvarieties associated to $\E$ are empty.
\item There exists an empty Ulrich subvariety associated to $\E$.
\item $c_2(\E)=0$.
\item $(X,\O_X(1),\E)$ is a linear Ulrich triple over a curve (see \cite{lms} or Definition \ref{not4}).
\end{enumerate}
\end{bthm}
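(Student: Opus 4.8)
The plan is to prove the cyclic chain $(i)\Rightarrow(ii)\Rightarrow(iii)\Rightarrow(iv)\Rightarrow(i)$, with essentially all the difficulty in $(iii)\Rightarrow(iv)$. The implication $(i)\Rightarrow(ii)$ is immediate once one notes that an Ulrich subvariety exists at all: being Ulrich, $\E$ is globally generated with $h^0(\E)=rd\ge r-1$, so a general choice of $r-1$ sections gives an injective $\varphi:\O_X^{\oplus(r-1)}\to\E$, which is empty if all Ulrich subvarieties are. For $(ii)\Rightarrow(iii)$, suppose $D_{r-2}(\varphi)=\emptyset$ for some injective $\varphi$. The source has rank $r-1$, so the fibrewise rank of $\varphi$ never exceeds $r-1$, and emptiness of the locus where it drops below $r-1$ forces it to be constantly $r-1$; thus $\varphi$ is a subbundle embedding with locally free quotient, yielding $0\to\O_X^{\oplus(r-1)}\to\E\to L\to0$ with $L$ a line bundle. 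By the Whitney formula $c(\E)=c(L)=1+c_1(L)$, so $c_2(\E)=0$.

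The heart of the matter is $(iii)\Rightarrow(iv)$. Assuming $c_2(\E)=0$, the nonemptiness criterion recalled just before Theorem \ref{connesse} (a general $\varphi$ has nonempty degeneracy locus exactly when $c_2(\E)\ne0$) shows the general $\varphi$ has empty locus, so the extension $0\to\O_X^{\oplus(r-1)}\to\E\to L\to0$ is again available. The plan is to rigidify it using the Ulrich vanishing. Twisting by $\O_X(-p)$ and using $H^i(\E(-p))=0$ for $1\le p\le n$ gives isomorphisms $H^i(L(-p))\cong H^{i+1}(\O_X(-p))^{\oplus(r-1)}$ for all $i$; equivalently, pushing forward along a general finite linear projection $\pi:X\to\P^n$, for which $\pi_*\E\cong\O_{\P^n}^{\oplus rd}$, lets one read the relevant structure off the induced sequence on $\P^n$. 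One then invokes the classification of \cite{lms}: an Ulrich bundle that is an extension of a line bundle by a trivial bundle---equivalently one with $c_i(\E)=0$ for all $i\ge2$---forces $(X,\O_X(1))$ to be a linear scroll over a smooth curve and $\E$ to be the associated linear Ulrich bundle, which is exactly the assertion that $(X,\O_X(1),\E)$ is a linear Ulrich triple over a curve.

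For $(iv)\Rightarrow(i)$ I would argue directly in the scroll: on a linear Ulrich triple over a curve, $\E$ restricts on each linear fibre $\P^{n-1}$ to a uniform (trivial up to twist) bundle, and this uniformity forces every injective $\varphi:\O_X^{\oplus(r-1)}\to\E$ to be a subbundle embedding, so every $D_{r-2}(\varphi)$ is empty. This also clarifies why $(i)$, which concerns all $\varphi$ rather than just the general one, is genuinely equivalent to the rest: the emptiness for all $\varphi$ is read off from the explicit geometry of $(iv)$, not from the vanishing $c_2(\E)=0$ in isolation. I expect $(iii)\Rightarrow(iv)$ to be the main obstacle: the Chern-class vanishing only delivers the abstract extension by a line bundle, and the real work is converting this, together with the Ulrich cohomology vanishing, into the rigid geometric conclusion that $X$ is a linear scroll over a curve---precisely where the structural results of \cite{lms} are indispensable.
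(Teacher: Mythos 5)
Your outline of $(i)\Rightarrow(ii)\Rightarrow(iii)$ and the reduction of $(iii)$ to the extension $0\to\O_X^{\oplus(r-1)}\to\E\to L\to 0$ agree with the paper, but there is a genuine gap at the heart of $(iii)\Rightarrow(iv)$. From $c_2(\E)=0$ and the Whitney formula you only get $c_i(\E)=0$ for $i\ge 2$; you do \emph{not} get $c_1(\E)^2=0$, and it is precisely the vanishing $c_1(\E)^2=0$ (equivalently $\nu(\det\E)=1$, together with $c_1(\E)\ne 0$) that the structural results you want to invoke require: \cite[Thm.~2]{ls} needs $\nu(\det\E)=1$ to force the fibres of $\Phi : X \to {\mathbb G}(r-1,\P H^0(\E))$ to be $(n-1)$-dimensional, and only then do \cite[Lemmas 2.10 and 2.12]{lms} produce the linear Ulrich triple over a curve. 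There is no classification in \cite{lms} of Ulrich bundles with $c_i(\E)=0$ for $i\ge 2$ that you can cite directly; the bridge from $c_2(\E)=0$ to $c_1(\E)^2=0$ is the actual content of the paper's proof and is missing from yours. Concretely: the paper restricts to a general surface section $S=X_2$, where the extension becomes $0\to\O_S^{\oplus(r-1)}\to\E_{|S}\to\O_S(D')\to 0$ with $\Ext^1(\O_S(D'),\O_S^{\oplus(r-1)})\cong H^1(\O_S(-D'))^{\oplus(r-1)}$; if $(D')^2>0$ then $D'$ is big and nef, Kawamata--Viehweg kills this group, the sequence splits, $\O_S$ becomes an Ulrich summand, and Lemma \ref{ulr}(v) forces $(S,\O_S(1))\cong(\P^2,\O_{\P^2}(1))$, i.e.\ $d=1$, contradicting $d\ge 2$. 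Hence $(D')^2=0$ and so $c_1(\E)^2=0$. Note that your sketch never uses the hypothesis $d\ge 2$; applied verbatim to $(\P^n,\O_{\P^n}(1),\O_{\P^n}^{\oplus r})$, where $c_2=0$, it would conclude that this is a linear Ulrich triple over a curve, which is false. That is a decisive sign the step is not just under-explained but cannot work as stated.

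Two smaller points. The proposed finite-projection argument ($\pi_*\E\cong\O_{\P^n}^{\oplus rd}$) is only a heuristic here and does not substitute for the splitting argument above. And in $(iv)\Rightarrow(i)$, it is not true that every injective $\varphi:\O_X^{\oplus(r-1)}\to\E$ on a linear Ulrich triple over a curve is a subbundle embedding: since $\E$ is trivial on the fibres of $\pi:X\to B$, the rank of $\varphi$ is constant along each fibre, so $D_{r-2}(\varphi)$ can be a nonempty union of fibres, which has codimension $1$. Such loci are excluded not because they are empty but because an Ulrich subvariety is required to be empty or of pure codimension $2$; the paper's cleaner route is that $c_2(\E)=0$ together with condition (a) of Definition \ref{usv} forces emptiness (a nonempty reduced pure codimension $2$ degeneracy locus would represent the class $c_2(\E)$ and meet $H^{n-2}$ positively).
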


Next, as a consequence of Theorem \ref{connesse}, we give a characterization of connectedness Ulrich subvarieties.

\begin{bcor} \hskip 3cm
\label{singconn}

Let $X \subset \P^N$ be a smooth irreducible variety of dimension $n$. Let $\E$  be a rank $r \ge 3$ Ulrich bundle on $X$ with $c_2(\E) \ne 0$. Then the following are equivalent:
\begin{itemize}
\item[(i)] $c_3(\E)=0$.
\item [(ii)] The Ulrich subvarieties associated to $\E$ have at least $r-1$ connected (or irreducible) components.
\item[(iii)] The Ulrich subvarieties associated to $\E$ are disconnected.
\end{itemize} 
Moreover, under the following conditions, Ulrich subvarieties $Z$ associated to $\E$ are connected:
\begin{itemize}
\item[(iv)] $X$ is not covered by lines $L \subset X$ such that $\E_{|L}$ is not ample and $n \ge 3$.
\item [(v)] $Z$ is singular and general (that is $Z=D_{r-2}(\varphi)$, with $\varphi$ a general morphism). 
\end{itemize}
\end{bcor}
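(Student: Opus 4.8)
The strategy is to specialize Theorem \ref{connesse} to the case $k=2$, the only genuinely new input being the computation of the invariant $s=h^0(\E^*)$ for an Ulrich bundle. I claim that $s=0$. Indeed, an Ulrich bundle is globally generated and semistable with respect to $\O_X(1)$, of strictly positive slope $\mu(\E)=c_1(\E)\cdot H^{n-1}/r$ (a curve-section computation gives $\mu(\E)=g+d-1$, which is positive since $d\ge 2$). A nonzero element of $H^0(\E^*)=\Hom(\E,\O_X)$ would produce a quotient of $\E$ of slope $\le 0<\mu(\E)$, contradicting semistability; hence $s=0$. Consequently the numerical threshold $r\ge k+s+1$ of Theorem \ref{connesse} becomes exactly $r\ge 3$, the hypothesis of the Corollary, and the number $r+1-k-s$ of components appearing in condition (ii) equals $r-1$.

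With this in hand, I would recall that the Ulrich subvarieties associated to $\E$ are precisely the degeneracy loci $D_{r-2}(\varphi)$ of morphisms $\varphi:\O_X^{\oplus(r-1)}\to\E$ that are reduced of pure codimension $2$ (Definition \ref{usv}), so they are exactly the loci governed by Theorem \ref{connesse} in the case $k=2$. Since $c_2(\E)\ne 0$ by hypothesis, the standing assumption $c_k(\E)\ne 0$ of that theorem holds. Because $r\ge 3=k+s+1$, the theorem yields the equivalence of its conditions (i), (ii), (iii), which translate verbatim into conditions (i), (ii), (iii) of the Corollary, using that $D_{r-2}(\varphi)$ is normal so that its connected and irreducible components coincide.

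For the sufficient condition (iv) I would translate the geometric hypothesis into the positivity hypothesis of Theorem \ref{connesse}(iv). On any line $L\subset X$ one has $\E_{|L}\cong\bigoplus_i\O_L(a_i)$ with $a_i\ge 0$ by global generation, so $\E_{|L}$ fails to be ample exactly when some $a_i=0$, i.e.\ when $\E_{|L}$ has a trivial summand. The content to be supplied is the equivalence
$$\B_+(\E)\ne X\iff X\text{ is not covered by lines }L\text{ with }\E_{|L}\text{ not ample},$$
that is, the identification of V-bigness with the absence of a covering family of such non-ample lines. Granting this, and noting that $r\ge 3$ and $n\ge 3$ give $k=2\le\min\{r-1,n-1\}$, Theorem \ref{connesse}(iv) applies and gives connectedness. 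Finally, condition (v) is the direct specialization of Theorem \ref{connesse}(v) to $k=2$.

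The main obstacle is precisely the V-bigness criterion displayed above. Establishing it requires analyzing the morphism defined by the globally generated tautological bundle $\O_{\P(\E)}(1)$ on $\P(\E)$: one must show that $\B_+(\E)=X$ forces a covering family of curves contracted by this morphism, and that for an Ulrich bundle the contracted curves lie over lines $L\subset X$ along which $\E_{|L}$ acquires a trivial summand, together with the converse. This is where the projective geometry of $X\subset\P^N$ and the special structure of Ulrich bundles genuinely enter, and it is the step I expect to require the most work.
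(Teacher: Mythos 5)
Your proposal follows essentially the same route as the paper: establish $s=h^0(\E^*)=0$ and then specialize Theorem \ref{connesse} to $k=2$, where the threshold $r\ge k+s+1$ becomes $r\ge 3$ and the count $r+1-k-s$ becomes $r-1$. The paper gets $s=0$ slightly more directly from Lemma \ref{ulr}(iv) (a nonzero section of $\E^*$ forces $(X,\O_X(1),\E)=(\P^n,\O_{\P^n}(1),\O_{\P^n}^{\oplus r})$, contradicting $c_2(\E)\ne 0$); your semistability argument also works, provided you note that $d\ge 2$ --- which you use for positivity of the slope --- is itself a consequence of $c_2(\E)\ne 0$. The one step you flag as the main obstacle, namely the identification $\B_+(\E)=\bigcup_L L$ over the lines $L\subset X$ with $\E_{|L}$ not ample, is not reproved in the paper either: it is precisely the statement of \cite[Thm.~2]{bu}, which the paper invokes as a black box, so no further work is required there.
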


In the case of surfaces, Ulrich subvarieties are $0$-dimensional smooth subschemes and we can classify completely the connected cases, which correspond to $c_2(\E)=1$.

\begin{bthm} \hskip 3cm
\label{superficie}

Let $S \subset \P^N$ be a smooth irreducible surface of degree $d \ge 2$ and let $\E$ be an Ulrich bundle on $S$ with $c_2(\E) \ne 0$. Then the Ulrich subvarieties associated to $\E$ are connected if and only if $(S,\O_S(1),\E)$ is one of the following:
\begin{itemize}
\item[(i)] $(Q,\O_Q(1), \mathcal S' \oplus \mathcal S'')$, where $Q$ is a smooth quadric in $\P^3$ and $\mathcal S', \mathcal S''$ are the two spinor bundles on $Q$.
\item [(ii)] $(\Gamma,\O_{\Gamma}(1),\O_{\Gamma}(T)^{\oplus 2})$, where $\Gamma$ is a smooth cubic in $\P^3$ and $T \subset \Gamma$ is a twisted cubic.
\item[(iii)] $(\Sigma,\O_{\Sigma}(1),\O_{\Sigma}(C_0+f)^{\oplus 2})$, where $\Sigma$ is a smooth non-degenerate cubic in $\P^4$.
\end{itemize} 
\end{bthm}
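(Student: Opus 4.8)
The plan is to reduce the statement to a classification of rank $2$ Ulrich bundles with $c_2(\E)=1$, and then to pin down the surface and the bundle. First I would observe that on a surface $S$ an Ulrich subvariety $D_{r-2}(\varphi)$ has pure codimension $2$, so it is $0$-dimensional, and by Thom--Porteous its length equals $c_2(\E)$; being reduced (indeed smooth) its number of connected components equals $c_2(\E)$, and by Proposition \ref{conn} this number is independent of the chosen $\varphi$. Hence the Ulrich subvarieties are connected \emph{iff} $c_2(\E)=1$. Moreover, since $c_3(\E)=0$ on a surface, the implication (i)$\Rightarrow$(ii) of Theorem \ref{connesse} for $k=2$ (with $s=h^0(\E^*)=0$, because $\E$ is $\mu$-semistable of positive slope $d+g-1>0$) shows that a rank $r\ge 3$ Ulrich bundle has at least $r-1\ge 2$ components, i.e. $c_2(\E)\ge r-1\ge 2$. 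Thus $c_2(\E)=1$ forces $r=2$, and from now on $\E$ has rank $2$.

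Next I would prove that $S$ is rational. Consider the incidence variety $I=\{(x,[s])\in S\times \P(H^0(\E)): s(x)=0\}$. As $\E$ is globally generated of rank $2$ with $h^0(\E)=2d$, the first projection makes $I$ a $\P^{2d-3}$-bundle over $S$, so $I$ is irreducible of dimension $2d-1=\dim\P(H^0(\E))$. The second projection is dominant (every section vanishes somewhere, since $c_2(\E)=1>0$) and its general fibre is the zero scheme of a general section, a single reduced point; hence it is birational. Therefore $I$, and with it $S$, is unirational, so $S$ is a rational surface by Castelnuovo's criterion; in particular $\chi(\O_S)=1$, $q(S)=0$, and $S$ is simply connected.

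I would then bound the degree. Riemann--Roch together with the Ulrich vanishings $\chi(\E(-1))=\chi(\E(-2))=0$ gives, for $r=2$,
\[ c_2(\E)=\tfrac12\big(c_1(\E)^2-c_1(\E)\cdot K_S\big)-2d+2\chi(\O_S), \]
so with $\chi(\O_S)=1$ and $c_2(\E)=1$ this reads $c_1^2-c_1\cdot K_S=4d-2$, where $c_1:=c_1(\E)$. Since $\E$ is $\mu$-semistable, Bogomolov's inequality yields $c_1^2\le 4c_2(\E)=4$. Now $c_1=\det\E$ is globally generated, hence nef; the case $c_1^2=0$ is excluded, for then $c_1$ defines a fibration onto a curve along whose general fibre $F$ the bundle $\E|_F$ is globally generated of degree $c_1\cdot F=0$, hence trivial, so $\E$ is pulled back from the base and $c_2(\E)=0$, a contradiction. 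Thus $c_1$ is big and nef, a general member $D\in|c_1|$ is smooth and connected (Kawamata--Viehweg vanishing, using $q(S)=0$), and adjunction with $c_1\cdot K_S=c_1^2-4d+2$ gives $p_a(D)=c_1^2-2d+2\ge 0$, i.e. $c_1^2\ge 2d-2$. Combined with $c_1^2\le 4$ this forces $d\le 3$.

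Finally I would identify $(S,\O_S(1),\E)$. A rational surface of degree $2\le d\le 3$ is, by the classification of surfaces of degree $\le 3$, the smooth quadric $Q\subset\P^3$ ($d=2$), the smooth cubic surface $\Gamma\subset\P^3$, or the cubic scroll $\Sigma\cong\mathbb F_1\subset\P^4$ ($d=3$). On the two degree-$3$ surfaces $2d-2=4\le c_1^2\le 4$, so $c_1^2=4=4c_2(\E)$ and Bogomolov's inequality is an equality; hence $\E$ is projectively flat, and as $S$ is simply connected $\E\cong\L^{\oplus 2}$ for an Ulrich line bundle $\L$ with $c_1(\L)^2=1$. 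The conditions $c_1(\L)\cdot H=d+g-1$ and $c_1(\L)^2=1$ then force $c_1(\L)=T$ (a twisted cubic) on $\Gamma$ and $c_1(\L)=C_0+f$ on $\Sigma$, giving (ii) and (iii). On $Q$ one has $c_1=H$ and $c_2(\E)=1$, and the description of the Ulrich bundles of $Q$ as sums of the spinor line bundles $\mathcal S'=\O(1,0)$, $\mathcal S''=\O(0,1)$ forces $\E=\mathcal S'\oplus\mathcal S''$, giving (i). The converse, that each of (i)--(iii) is Ulrich with $c_2(\E)=1$ and hence has a one-point (connected) Ulrich subvariety, is a direct computation. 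The main obstacle is this last step: ensuring that the bound $d\le 3$ truly isolates these surfaces and, above all, ruling out indecomposable rank-$2$ Ulrich bundles with $c_2=1$ on the degree-$3$ surfaces — handled here by the Bogomolov-equality/projective-flatness argument — together with the explicit identification of the Ulrich line bundles involved.
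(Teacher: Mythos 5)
Your proposal is correct in outline and reaches the right classification, but by a genuinely different route from the paper, so let me compare. The paper's Lemma \ref{sup2-bis} gets everything you extract from rationality more directly from Lemma \ref{zeta1}: the point $P=Z$ lies on a smooth irreducible curve $C\in|\det\E|$ with $\O_C(P)$ globally generated, forcing $C\cong\P^1$; this gives $H^1(\O_S)=0$ and, counting sections of $\I_{\{P\}/S}(C)$ via $h^0(\E)=2d$, the exact value $c_1(\E)^2=2d-2$, after which Bogomolov gives $d\le 3$. You instead prove $S$ is rational via the incidence correspondence (essentially the paper's Remark \ref{moretti}), use $\chi(\O_S)=1$ in Riemann--Roch to get $c_1^2-c_1\cdot K_S=4d-2$, and combine Bogomolov ($c_1^2\le4$) with $p_a(D)\ge0$ (giving $c_1^2\ge 2d-2$). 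This is a clean alternative, and it buys you, in degree $3$, the equality case of Bogomolov, which you then use to classify the bundle intrinsically instead of citing \cite[Ex.~3.6]{ch} for the cubic in $\P^3$ and \cite{a} for $\mathbb F_1$, as the paper does; your first step (connected iff $c_2(\E)=1$, and then $r=2$ via Theorem \ref{connesse}) coincides with the paper's.

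The one step you should tighten is precisely the one you flag as the main obstacle: ``Bogomolov equality implies $\E$ projectively flat, hence $\E\cong\L^{\oplus2}$ on a simply connected surface.'' Projective flatness from $\Delta(\E)=4c_2-c_1^2=0$ is a statement about $\mu$-\emph{polystable} bundles (via Hermitian--Einstein metrics); for a merely $\mu$-semistable $\E$ you must argue separately. The repair is standard: if $\E$ were $\mu$-stable, projective flatness plus $\pi_1(S)=1$ would force $\P(\E)$ trivial and $\E\cong\L^{\oplus2}$, contradicting stability, so $\E$ is strictly $\mu$-semistable and sits in an exact sequence $0\to L_1\to\E\to L_2\otimes\I_{W/S}\to0$ with $\mu(L_1)=\mu(\E)$; then $0=4c_2-c_1^2=-(L_1-L_2)^2+4\,\mathrm{length}(W)$ together with $(L_1-L_2)\cdot H=0$ and the Hodge index theorem gives $W=\emptyset$ and $L_1\equiv L_2$, hence $L_1\cong L_2$ (as $S$ is rational) and the extension splits because $H^1(\O_S)=0$. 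With that patch, and the routine verifications that the three listed triples are Ulrich with $c_2=1$ (the paper's Examples \ref{cub} and \ref{cusc} and Lemma \ref{cn=1-tris}), your argument is complete.
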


We also study some standard cases on $3$-folds (see Proposition \ref{non big}) and give some connectedness criteria and several examples with vanishing of Chern classes, see sections \ref{sette} and \ref{dieci}. 

Finally, in section \ref{undici}, we prove some results for rank $2$ bundles. We have a characterization when $H^1(\O_X)=0$, see Lemma  \ref{conn2} and several connectedness or disconnectedness criteria, see Lemma  \ref{conn3}. These show that, when $H^1(\O_X) \ne 0$, the connectedness scenery  appears to be kind of unpredictable.

\section{Notation}

Unless otherwise specified, we henceforth establish throughout the paper the following.

\begin{notation} 

\hskip 3cm

\begin{itemize} 
\item $X$ is a smooth irreducible projective variety of dimension $n \ge 1$.
\item $\rho(X)$ is the Picard number of $X$.
\item $\nu(\L)$ is the numerical dimension of a nef line bundle $\L$ on $X$.
\item $A^k(X)$ is the group rational equivalence classes of $(n-k)$-cycles on $X$.

Moreover, when $X \subseteq \P^N$:
\item $H \in |\O_X(1)|$ is a very ample divisor. 
\item $d=H^n$ is the degree of $X$.
\item $X$ is subcanonical if $K_X = -i_X H$ for some $i_X \in \Z$.
\item For $1 \le i \le n-1$, let $H_i \in |H|$ be general divisors and set $X_n:=X$ and $X_i=H_1\cap \ldots \cap H_{n-i}$. 
\end{itemize} 
\end{notation}

We work over the complex numbers.

\section{Definitions and preliminary results}

We will need the following statement on vanishing of cohomology.

\begin{lemma} 
\label{van}
Let $A$ be a divisor on $X$ with $h^0(A) \ge 2$ and let $H=hA$, with $h \ge 1$. Let $\F, \G$ be two vector bundles on $X$. We have:
\begin{itemize}
\item[(i)] If $H^0(\G(2A))=H^1(\G(A))=0$, then $H^1(\G)=0$.
\item[(ii)] If $H^0(\F(-H))=H^1(\F(-2H))=0$, then $H^1(\F(-jA))=0$ for all $j \ge 2h$.
\end{itemize} 
\end{lemma}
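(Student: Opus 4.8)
The plan is to prove (ii) by reducing it, through a short induction on the twisting parameter, to the single statement (i); so the heart of the matter is (i), which I treat first.

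For (i) I would exploit the hypothesis $h^0(A)\ge 2$ to produce two sections. Fix $s_0\in H^0(A)\setminus\{0\}$ with zero divisor $D_0=(s_0)_0$, and choose a second section $s_1\in H^0(A)$ whose divisor $(s_1)_0$ contains no component of $D_0$; such an $s_1$ exists for a general choice inside the (at least one-dimensional) linear system $|A|$. Multiplication by $s_0$ gives the two exact sequences
$$0\to\G\xrightarrow{s_0}\G(A)\to\G(A)|_{D_0}\to 0,\qquad 0\to\G(A)\xrightarrow{s_0}\G(2A)\to\G(2A)|_{D_0}\to 0.$$
From the second sequence, the hypotheses $H^0(\G(2A))=0$ and $H^1(\G(A))=0$ force $H^0(\G(2A)|_{D_0})=0$. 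Next, since $s_1|_{D_0}$ is a nonzerodivisor on $\O_{D_0}$ by the choice of $s_1$, multiplication by it embeds $\G(A)|_{D_0}\hookrightarrow\G(2A)|_{D_0}$, whence $H^0(\G(A)|_{D_0})\hookrightarrow H^0(\G(2A)|_{D_0})=0$. Finally the first sequence, together with $H^1(\G(A))=0$ and the vanishing $H^0(\G(A)|_{D_0})=0$ just obtained, yields $H^1(\G)=0$. Equivalently, one may run the whole argument through the Koszul complex of the section $(s_0,s_1)$ of $\O_X(A)^{\oplus 2}$, tensored with $\G(2A)$.

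For (ii), recall $H=hA$, so the hypotheses read $H^0(\F(-hA))=0$ and $H^1(\F(-2hA))=0$, and I want $H^1(\F(-jA))=0$ for every $j\ge 2h$. I would first record a propagation of vanishing for $H^0$: since $A$ is effective (as $h^0(A)\ge 2$), for any $i\ge h$ there is an effective section of $(i-h)A$, and multiplying by it embeds $\F(-iA)\hookrightarrow\F(-hA)$; hence $H^0(\F(-iA))=0$ for all $i\ge h$. Now I argue by induction on $j\ge 2h$. The base case $j=2h$ is exactly $H^1(\F(-2hA))=H^1(\F(-2H))=0$. For the inductive step, with $j\ge 2h+1$, I apply (i) to $\G=\F(-jA)$, so that $\G(A)=\F(-(j-1)A)$ and $\G(2A)=\F(-(j-2)A)$: the needed hypothesis $H^0(\G(2A))=H^0(\F(-(j-2)A))=0$ holds because $j-2\ge 2h-1\ge h$, while $H^1(\G(A))=H^1(\F(-(j-1)A))=0$ is the inductive hypothesis; part (i) then delivers $H^1(\G)=H^1(\F(-jA))=0$.

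The routine parts are the diagram chases in the long exact sequences; the one point that genuinely needs care is the choice of the second section $s_1$ in (i), i.e. guaranteeing that $(s_1)_0$ shares no component with $D_0$ (equivalently, that the Koszul section $(s_0,s_1)$ cuts out a subscheme of codimension two). This is automatic when $|A|$ has no fixed divisorial component, which is the relevant situation in our applications; in general one must first split off the fixed part of $|A|$, and I expect this bookkeeping to be the main, though minor, obstacle. Everything else is formal.
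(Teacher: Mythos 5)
Your proof is correct and follows essentially the same route as the paper's: restrict to a member $B=D_0$ of $|A|$, use the two multiplication sequences to get first $H^0(\G(2A)_{|B})=0$ and then $H^0(\G(A)_{|B})=0$, and conclude $H^1(\G)=0$ from the third sequence; part (ii) is the identical induction on $j$ applying (i) to $\G=\F(-jA)$ together with the propagation of the $H^0$-vanishing. The nonzerodivisor point you flag for the second section is a genuine subtlety, but it is equally present (and silently assumed) in the paper's proof, which simply writes $H^0(\G(A)_{|B})\subseteq H^0(\G(2A)_{|B})$; in the paper's applications $|A|$ generates $\Pic(X)\cong\Z A$, so $|A|$ has no fixed component and your more careful formulation suffices.
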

\begin{proof} 
Since $h^0(A) \ge 2$, we can choose $B \in |A|$ and then the exact sequence
$$0 \to \O_X \to A \to A_{|B} \to 0$$
implies that $A_{|B}$ is effective. To see (i), observe that the exact sequence
$$0 \to \G(A) \to \G(2A) \to \G(2A)_{|B} \to 0$$
implies that $H^0(\G(2A)_{|B})=0$. In particular we have that $\dim B \ge 1$ and, since $H^0(\G(A)_{|B}) \subseteq H^0(\G(2A)_{|B})=0$, we deduce that $H^0(\G(A)_{|B})=0$. Then, the exact sequence
$$0 \to \G \to \G(A) \to \G(A)_{|B} \to 0$$
implies that $H^1(\G)=0$. This proves (i). We now show (ii) by induction on $j$. If $j=2h$, then $H^1(\F(-jA))=0$ by hypothesis. If $j \ge 2h+1$, set $\G=\F(-jA)$. Then, by induction,
$$H^1(\G(A))=H^1(\F(-(j-1)A))=0.$$
Also, since $-j+2 \le 1-2h \le -h$ we have that 
$$H^0(\G(2A))=H^0(\F((-j+2)A)) \subseteq H^0(\F(-hA))=0.$$ 
Therefore (i) implies that $H^1(\F(-jA))=0$ and this proves (ii). 
\end{proof}

Given a vector bundle $\E$ on $X$, one can measure of the positivity of $\E$ via stable base loci. We recall the relevant definitions (see for example \cite[Def.'s 2.1 and 2.4]{bkkmsu}).
\begin{defi}
The {\it base locus} of $\E$ is $\Bs(\E)=\{x \in X : H^0(\E) \to \E(x) \ \hbox{is not surjective}\}$. The {\it stable base locus} of $\E$ is $\B(\E)=\bigcap_{m>0} \Bs(S^m\E)$. Let $A$ be an ample line bundle on $X$. The {\it augmented base locus} of $\E$ is 
$$\B_+(\E)=\bigcap_{p/q \in \Q^{>0}} \B((S^q \E)(-pA)).$$
The bundle $\E$ is {\it V-big} if $\B_+(\E) \ne X$. 
\end{defi}
The definition of $\B_+(\E)$ does not depend on the choice of $A$ \cite[(2.5.1)]{bkkmsu}. 

A very useful geometrical vision of vector bundles can be given via degeneracy loci.

\begin{defi}
Let $\varphi : \E \to \F$ be a morphism of vector bundles of ranks $e, f$ on $X$. For any $k \in \Z$ with $0 \le k \le \min\{e,f\}$ we denote the $k$-th degeneracy locus of $\varphi$ by $D_k(\varphi)$, that is
$$D_k(\varphi)=\{x \in X : \rk \varphi(x) \le k\}.$$
\end{defi}

\begin{remark}
\label{degeneracy}
Degeneracy loci have a natural scheme structure, given locally by the vanishings of the $(k+1) \times (k+1)$ minors of the matrix defining $\varphi(x)$. Equivalently \cite[\S (2.1)]{las}, the ideal sheaf of $D_k(\varphi)$ is the image of the morphism $\Lambda^{k+1} \E \otimes \Lambda^{k+1} \F^* \to \O_X$ induced by $\Lambda^{k+1} \varphi$.
\end{remark}

In the following lemma we collect some known results.

\begin{lemma}
\label{red}
Let $\E, \F$ be vector bundles on $X$ of ranks $e, f$ respectively and such that $\E^* \otimes \F$ is globally generated. Let $\varphi : \E \to \F$ be a general morphism. Let $k \in \Z$ be such that $0 \le k \le \min\{e, f\}$. If $D_k(\varphi) \ne \emptyset$, then $D_k(\varphi)$ is regular in codimension $e+f-2k$, normal, reduced, Cohen-Macaulay, of pure codimension $(e-k)(f-k)$ and $\Sing(D_k(\varphi))=D_{k-1}(\varphi)$.
\end{lemma}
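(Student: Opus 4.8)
The plan is to derive every assertion from the corresponding classical property of the generic determinantal variety and to transport it to $D_k(\varphi)$ by a Bertini--Kleiman type argument whose essential input is the global generation of $\SHom(\E,\F)=\E^*\otimes\F$. First I would fix the local model: in the affine space $\mathbb{A}=\operatorname{Mat}_{f\times e}$ of $f\times e$ matrices, let $M_j$ be the locus of matrices of rank $\le j$. It is classical that $M_j$ is reduced, normal and Cohen--Macaulay of codimension $(e-j)(f-j)$, with $\Sing(M_j)=M_{j-1}$; since $\codim_{M_j}M_{j-1}=(e-j+1)(f-j+1)-(e-j)(f-j)=e+f-2j+1$, this says precisely that $M_j$ is regular in codimension $e+f-2j$. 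These are exactly the properties I want to see on $D_k(\varphi)$.

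Next I would globalize. Write $W=H^0(X,\SHom(\E,\F))$, let $T$ be the total space of $\SHom(\E,\F)$ with its projection to $X$, and let $\D_k\subseteq T$ be the relative degeneracy locus, whose fibre over $x$ is the rank $\le k$ locus inside $\SHom(\E,\F)(x)$. Locally on $X$ one has $T\cong U\times\mathbb{A}$ and $\D_k\cong U\times M_k$, so $\D_k$ is reduced, normal and Cohen--Macaulay of codimension $(e-k)(f-k)$, with $\Sing(\D_k)=\D_{k-1}$. I would then consider the evaluation $\ev\colon W\times X\to T$, $(\psi,x)\mapsto\psi(x)$. Global generation means that for each $x$ the restriction of $\ev$ to $W\times\{x\}$ already surjects onto the fibre $\SHom(\E,\F)(x)$, from which one checks that $\ev$ is a smooth surjective morphism. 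Hence $\cZ:=\ev^{-1}(\D_k)$ inherits from $\D_k$ every property stable under smooth pullback: it is reduced, normal, Cohen--Macaulay of codimension $(e-k)(f-k)$ in $W\times X$, with $\Sing(\cZ)=\ev^{-1}(\D_{k-1})$.

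Finally I would specialize to a general $\varphi$ through the projection $q\colon\cZ\to W$, whose fibre over $\varphi$ is exactly $D_k(\varphi)\subseteq\{\varphi\}\times X\cong X$, while $\ev^{-1}(\D_{k-1})$ cuts out $D_{k-1}(\varphi)$. Over a dense open of $W$ the map $q$ is flat by generic flatness, so, $\cZ$ being Cohen--Macaulay, the general fibre $D_k(\varphi)$ is Cohen--Macaulay of pure codimension $(e-k)(f-k)$, hence $S_2$. In characteristic $0$, generic smoothness of $q$ on the smooth locus $\cZ\setminus\Sing(\cZ)$ shows that $D_k(\varphi)$ is regular away from $D_{k-1}(\varphi)$, so $\Sing(D_k(\varphi))\subseteq D_{k-1}(\varphi)$; since $\codim_{D_k(\varphi)}D_{k-1}(\varphi)=e+f-2k+1\ge 2$ outside the trivial case $D_k(\varphi)=X$, this yields $R_1$, whence normality and reducedness, and combined with the fact that the local model $M_k$ is singular exactly along $M_{k-1}$ it gives the precise equality $\Sing(D_k(\varphi))=D_{k-1}(\varphi)$.

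The step I expect to be the main obstacle is this last transfer: obtaining, in a single general-position argument, all the properties at once and in particular the exact identity $\Sing(D_k(\varphi))=D_{k-1}(\varphi)$ (both inclusions) rather than a mere containment. It rests on smoothness of $\ev$ carrying $\Sing(\D_k)=\D_{k-1}$ over to $\Sing(\cZ)=\ev^{-1}(\D_{k-1})$ verbatim, on this equality surviving restriction to a general fibre, and on the codimension bookkeeping above forbidding any new or embedded components. As all these ingredients are standard, in practice I would assemble them and cite \cite{fl} together with the literature on determinantal loci referenced in Remark \ref{degeneracy}.
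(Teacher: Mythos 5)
Your proposal reaches the same conclusions and, in its second half, coincides with the paper's endgame: once one knows that $D_k(\varphi)$ is Cohen--Macaulay of pure codimension $(e-k)(f-k)$ with $\Sing(D_k(\varphi))=D_{k-1}(\varphi)$, the chain ``$\codim_{D_k(\varphi)}D_{k-1}(\varphi)=e+f-2k+1\ge 2$ gives $R_1$, Cohen--Macaulayness gives $S_2$, Serre's criterion gives normal, hence reduced'' is exactly what the paper writes. The difference is in how the inputs are obtained: the paper simply cites them (the ``folklore'' statement of \cite{ba} for the pure codimension and the identification of the singular locus, and \cite{acgh} for Cohen--Macaulayness), whereas you re-derive them from the generic determinantal variety $M_k\subset\operatorname{Mat}_{f\times e}$ via the smooth surjective evaluation map $W\times X\to T$ followed by a generic-flatness/generic-smoothness specialization. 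Since that Bertini--Kleiman argument is precisely the standard proof of the cited folklore, the two routes are the same mathematics at different levels of granularity: yours is self-contained but longer, the paper's is a short reduction to the literature.

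The step you flag yourself is indeed the only soft spot, and as written it is a gap: the inclusion $D_{k-1}(\varphi)\subseteq \Sing(D_k(\varphi))$. Smoothness of $\ev$ does give $\Sing(\cZ)=\ev^{-1}(\D_{k-1})$, and generic smoothness gives the containment $\Sing(D_k(\varphi))\subseteq D_{k-1}(\varphi)$, but singularity of the total space $\cZ$ along a locus does \emph{not} formally pass to the fibres of $q$ through that locus (e.g.\ $\cZ=\{xy=0\}\subset\mathbb{A}^2$ with $q=y$ has a smooth fibre through the singular point). To close it you need an actual argument, for instance: by flatness the fibre $D_k(\varphi)$ is cut out of $\cZ$ locally by a regular sequence of length $\dim W$, and cutting by a regular sequence does not decrease multiplicity, so $\mathrm{mult}_x D_k(\varphi)\ge \mathrm{mult}_{\ev(\varphi,x)}M_k>1$ for $x\in D_{k-1}(\varphi)$; alternatively, transversality of the general $\varphi$ to the strata $M_j\setminus M_{j-1}$ exhibits $D_k(\varphi)$ near such $x$ as a smooth factor times the transverse slice of $M_k$ along $M_{k-1}$, which is singular. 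Either supply one of these, or do as the paper does and quote the statement from \cite{ba}, which already contains both inclusions.
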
 
\begin{proof}
It follows from \cite[Statement (folklore)(i), \S 4.1]{ba} that $D_k(\varphi)$ is of pure codimension $(e-k)(f-k)$ and $\Sing(D_k(\varphi))=D_{k-1}(\varphi)$. Then, $D_k(\varphi)$ is Cohen-Macaulay by \cite[Ch.~II, Prop.~(4.1)]{acgh}. If $D_k(\varphi)$ is smooth we are done. Otherwise, \cite[Statement (folklore)(i), \S 4.1]{ba} implies that $\Sing(D_{k-1}(\varphi)) \ne \emptyset$ is of pure codimension $(e+1-k)(f+1-k)$. In particular $n-(e-k)(f-k) \ge e+f-2k+1 \ge 2$. Now $\codim_{D_k(\varphi)}D_{k-1}(\varphi)=e+f-2k+1 \ge 2$ so that $D_k(\varphi)$ is regular in codimension $e+f-2k$ and is $R_1$. Moreover, for any $x \in D_k(\varphi)$ and any prime ideal $\mathfrak p$ of $A:=\O_{D_k(\varphi),x}$ we have that $A_{\mathfrak p}$ is Cohen-Macaulay by \cite[Thm.~II.8.21A(b)]{h1}. Hence $\depth A_{\mathfrak p} = \dim A_{\mathfrak p} = n-(e-k)(f-k) \ge 2$, so that $A$ is $S_2$. Then $A$ is normal by \cite[Thm.~II.8.22A]{h1} and reduced by \cite[Lemma 10.153.3, \href{https://stacks.math.columbia.edu/tag/031R}{Tag 031R}]{st}, whence so is $D_k(\varphi)$.
\end{proof}

\section{Generalities on globally generated bundles}

We collect here some simple but useful results about globally generated bundles.

\begin{lemma} 
\label{gg}
Let $\E$ be a rank $r \ge 1$ globally generated bundle on $X$. For any $1 \le i \le n$, let $H$ be a very ample divisor on $X$. We have:
\begin{itemize}
\item[(i)] $c_1(\E)^i=0$ if and only if $c_1(\E)^iH^{n-i}=0$. 
\item [(ii)] $c_i(\E)=0$ if and only if $c_i(\E)H^{n-i}=0$. 
\end{itemize} 
Moreover, let $k \in \Z$ be such that $1 \le k \le \min\{r, n\}$. Let $\varphi : \O_X^{\oplus (r+1-k)} \to \E$ be a general morphism. Then the following are equivalent:
\begin{itemize}
\item[(iii)] $D_{r-k}(\varphi)=\emptyset$.
\item[(iv)] $c_i(\E)=0$ for $i \ge k$.
\item[(v)] $c_k(\E)=0$. 
\end{itemize}
Also, if any of (iii)-(v) does not hold, then $D_{r-k}(\varphi)$ has pure codimension $k$ and
$c_k(\E)=[D_{r-k}(\varphi)] \in A^k(X)$.
\end{lemma}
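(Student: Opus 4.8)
The plan is to prove Lemma~\ref{gg} by a sequence of reductions, handling the three groups of statements in the natural order (i)--(ii), then (iii)--(v), then the final degree formula. The unifying idea is that global generation lets us intersect with general members of $|H|$ and restrict to the smooth general complete intersection surfaces/curves $X_i$ introduced in the Notation, where positivity of Chern classes of globally generated bundles becomes transparent.

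For (i) and (ii) I would argue by Bertini and induction on dimension. One direction is trivial: if $c_1(\E)^i=0$ (resp.\ $c_i(\E)=0$) then certainly $c_1(\E)^iH^{n-i}=0$ (resp.\ $c_i(\E)H^{n-i}=0$). For the converse, the key point is that for a globally generated bundle $\E$ the Chern classes are represented by effective cycles: restricting $\E$ to a general complete intersection surface $X_2$ (for the $c_1$ statement, a general $X_i$ cut out by $n-i$ general hyperplanes), the restriction is again globally generated, and $H$ is ample there. I would use the fact that $c_1(\E)^iH^{n-i}=0$ forces, via the projection/pushforward through the successive hyperplane sections, the vanishing of $c_1(\E)^i$ as a class, because intersecting an effective nonzero cycle with an ample $H^{n-i}$ cannot give zero. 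Concretely, I expect to invoke the standard positivity result that the Chern classes of a globally generated bundle are numerically effective (nef) cycles, so that pairing against powers of the ample $H$ detects their nonvanishing; this is exactly what makes ``numerical'' vanishing equivalent to actual vanishing in $A^\bullet(X)$.

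For the equivalence (iii)$\iff$(iv)$\iff$(v) together with the final assertions, I would lean directly on the classical theory recalled in Remark~\ref{degeneracy} and Lemma~\ref{red}. Since $\E^*\otimes(\O_X^{\oplus(r+1-k)})^{\vee}{}^* = \E^{\oplus(r+1-k)}$ is globally generated, a general $\varphi$ satisfies the conclusions of Lemma~\ref{red} with $e=r+1-k$, $f=r$, and degeneracy index $r-k$: thus if nonempty, $D_{r-k}(\varphi)$ has pure codimension $(e-(r-k))(f-(r-k))=k\cdot 1=k$. The implication (v)$\Rightarrow$(iv) and (iv)$\Rightarrow$(iii) go through the Thom--Porteous formula: the class of $D_{r-k}(\varphi)$, when it has the expected codimension $k$, equals the determinantal Chern polynomial expression which for this corank-one-type locus reduces precisely to $c_k(\E)$. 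Hence $c_k(\E)=0$ forces the expected-dimensional cycle to vanish, and a general $\varphi$ then has empty locus. For (iii)$\Rightarrow$(v) I would use that nonvanishing of $c_k(\E)$, detected via (ii) by pairing with $H^{n-k}$, guarantees a nonempty degeneracy locus for the general $\varphi$; equivalently, emptiness of $D_{r-k}(\varphi)$ forces the Porteous class $c_k(\E)$ to be zero. The final displayed identity $c_k(\E)=[D_{r-k}(\varphi)]\in A^k(X)$ is then just the Thom--Porteous formula in the case where the locus has pure codimension $k$, and (iv) follows from (v) because once $c_k(\E)=0$ the higher Chern classes of the corank considerations vanish as well, or more simply because (iii) holds and higher degeneracy loci are contained in $D_{r-k}(\varphi)=\emptyset$.

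The main obstacle I anticipate is (ii), specifically the converse direction: passing from the numerical statement $c_i(\E)H^{n-i}=0$ to the actual class equality $c_i(\E)=0$ for intermediate $i$ with $1<i<n$. For $c_1$ a power one can clear this by iterated hyperplane sectioning down to a surface or curve, but for a genuine codimension-$i$ Chern class the representation by an effective cycle and the conclusion that pairing with a single ample power detects it is the delicate input. I would resolve this by reducing to the surface case $X_i$ (dimension $i$) via general hyperplane sections, where $c_i(\E_{|X_i})$ is a zero-cycle whose degree equals $c_i(\E)H^{n-i}$; global generation of $\E_{|X_i}$ ensures this zero-cycle is effective, so its degree vanishes if and only if it is empty, i.e.\ $c_i(\E_{|X_i})=0$, and then a compatibility/restriction argument lifts this back to $c_i(\E)=0$ on $X$. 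The care needed is in justifying that the restriction map $A^i(X)\to A^i(X_i)$ is injective on the relevant class, which follows from the Lefschetz-type hyperplane properties for general complete intersections, or can be bypassed by representing $c_i(\E)$ itself as an effective cycle via a general section of $\E$ and arguing directly with intersection positivity.
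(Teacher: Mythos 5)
Your treatment of the equivalence (iii)--(v) and of the final identity is essentially the paper's: Lemma \ref{red} gives pure codimension $k$, Thom--Porteous gives $[D_{r-k}(\varphi)]=c_k(\E)$, and positivity of the degree of a nonempty effective cycle against powers of $H$ gives (v)$\Rightarrow$(iii). One small caveat: for (iii)$\Rightarrow$(iv), the remark that higher degeneracy loci are contained in $D_{r-k}(\varphi)=\emptyset$ does not by itself yield $c_i(\E)=0$ for $i>k$, since $[D_{r-k-1}(\varphi)]$ is a $2\times 2$ Porteous determinant rather than $c_{k+1}(\E)$; the working argument, which you gesture at, is that $\varphi$ then has constant rank, so its cokernel $\F$ is a bundle of rank $k-1$ and $c_i(\E)=c_i(\F)=0$ for $i\ge k$.

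The genuine problem is your primary route for the converse direction of (ii). The restriction map $A^i(X)\to A^i(X_i)$ to an $i$-dimensional general complete intersection is not injective, and no Lefschetz-type theorem supplies this: already for $X=\P^1\times\P^1\times\P^1$ and $i=2$ the map $A^2(X)\cong\Z^{3}\to A^2(X_2)=A_0(X_2)\cong\Z$ kills the difference of two ruling classes of equal $H$-degree. Similarly, the blanket principle that a nef cycle class of intermediate codimension is detected by pairing with $H^{n-i}$ is not available in general; the paper only uses the divisor-power instance of such a statement, citing Fulger--Lehmann to get $(\det\E)^iH^{n-i}=0\Rightarrow c_1(\E)^i=0$ in part (i). Your fallback is the correct fix and is exactly what the paper does: prove (iii)--(v) \emph{first}, so that $c_i(\E)\ne0$ forces $D_{r-i}(\varphi)\ne\emptyset$ of pure codimension $i$ with $c_i(\E)=[D_{r-i}(\varphi)]$, and then a nonempty effective cycle of dimension $n-i\ge1$ has positive degree against $H^{n-i}$, contradicting $c_i(\E)H^{n-i}=0$. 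Committing to that order of proof removes the gap.
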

\begin{proof}
To see (i), one implication being obvious, assume that $c_1(\E)^iH^{n-i}=0$. Since $\E$ is globally generated, it follows that $\det \E$ is globally generated and $(\det \E)^iH^{n-i}=0$ implies that $c_1(\E)^i=0$ by \cite[Cor.~3.15]{fl2} (see also \cite[Prop.~3.7]{fl1}). This proves (i). Suppose now that $D_{r-k}(\varphi) \ne \emptyset$. Then $D_{r-k}(\varphi)$ has pure dimension $n-k \ge 0$ by Lemma \ref{red}, so that $c_k(\E)=[D_{r-k}(\varphi)] \in A^k(X)$. This proves the last assertion of the lemma, once proved the equivalence (iii)-(v), that we now show. If (iii) holds, we have that $\varphi$ has constant rank $r+1-k$ and we get an exact sequence
$$0 \to \O_X^{\oplus (r+1-k)} \to \E \to \F \to 0$$
where $\F$ is also a vector bundle, of rank $k-1$. But then, for any $i \ge k$ we have
$$c_i(\E)=\sum\limits_{j=0}^i c_j(\O_X^{\oplus (r+1-k)})c_{i-j}(\F)=c_i(\F)=0.$$ 
Hence (iii) implies (iv). Clearly (iv) implies (v). Now assume (v). If $D_{r-k}(\varphi) \ne \emptyset$ we get a contradiction both if $n-k=0$, since $0=c_k(\E)=[D_{r-k}(\varphi)]$, while $D_{r-k}(\varphi)$ is $0$-dimensional and nonempty and if $n-k>0$, since we would have that  $0 < H^kD_{r-k}(\varphi)=H^kc_k(\E)=0$. This proves that (v) implies (iii), hence the equivalence (iii)-(v) and also proves the last assertion of the lemma. It remains to prove (ii). One implication being obvious, we assume that $c_i(\E)H^{n-i}=0$. If $i=n$ we are done, so assume that $i \le n-1$. Setting $k=i$, if $c_i(\E) \ne 0$, we get that $D_{r-i}(\varphi)$ is nonempty of pure codimension $i$ and $c_i(\E)=[D_{r-i}(\varphi)]$, giving the contradiction $0 < H^{n-i}D_{r-i}(\varphi)=H^{n-i}c_i(\E) =0$. This proves (ii) and concludes the proof of the lemma. 
\end{proof}

In the sequel, we will be interested in the vanishing of some Chern classes, which we now study.

\begin{lemma} 
\label{c3no}
Let $\E$ be a rank $r$ globally generated bundle on $X \subset \P^N$. We have:
\begin{itemize}
\item [(i)] If $c_1(\E)^t = 0$ for some $t \ge 1$, then $c_i(\E)=0$ for all $i \ge t$.
\item [(ii)] If $t \ge 1$ and $c_1(\E)^t \ne 0$ (in particular if $c_t(\E) \ne 0$), then $H^j(-\det \E)=0$ for $0 \le j \le t-1$.
\item [(iii)] Let $s=h^0(\E^*)$. If $s \ge 1$, then $\E \cong \O_X^{\oplus s} \oplus \E_1$ with $\E_1$ globally generated of rank $r-s$ and $h^0(\E_1^*)=0$. Moreover, if $c_k(\E) \ne 0$, then $r \ge k+s$. 
\end{itemize}
\end{lemma}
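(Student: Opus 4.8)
The plan is to treat the three parts essentially independently, deducing the parenthetical clause of (ii) from (i). I expect the genuine difficulty to lie in the vanishing assertion (ii); parts (i) and (iii) are comparatively formal.

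For (iii) I would split off trivial summands one at a time. A nonzero $\sigma \in H^0(\E^*) = \Hom(\E,\O_X)$ is a morphism $\sigma : \E \to \O_X$; composing with the surjective evaluation $\O_X^{\oplus h^0(\E)} \twoheadrightarrow \E$ gives a map $\O_X^{\oplus h^0(\E)} \to \O_X$ sending the $i$-th generator to the constant $\sigma(e_i) \in H^0(\O_X) = \C$. This composite is nonzero (else $\sigma$ would vanish on the image $\E$), so some $\sigma(e_i)$ is a nonzero constant; hence the composite is surjective, $\sigma$ is surjective, and $H^0(\E) \to H^0(\O_X)$ hits a nonzero constant. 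The latter produces a section $\O_X \to \E$ of $\sigma$, splitting $0 \to \Ker\sigma \to \E \to \O_X \to 0$ and giving $\E \cong \O_X \oplus \K$ with $\K$ globally generated (a direct summand of a globally generated bundle) and $h^0(\K^*) = s-1$. Iterating $s$ times yields $\E \cong \O_X^{\oplus s} \oplus \E_1$ with $h^0(\E_1^*)=0$ and $\E_1$ globally generated of rank $r-s$. Finally $c(\E)=c(\E_1)$ since $c(\O_X^{\oplus s})=1$, so $c_k(\E)=c_k(\E_1)=0$ whenever $k > \rk\E_1 = r-s$; thus $c_k(\E)\ne 0$ forces $k \le r-s$.

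For (i) I would use the morphism $f : X \to G$ to the Grassmannian of rank-$r$ quotients of $H^0(\E)$, for which $\E = f^*\mathcal Q$ and $\det\E = f^*\O_G(1)$ with $\O_G(1)$ ample. Then $\nu(\det\E) = \dim f(X) =: \delta$, and $c_1(\E)^t = (f^*\O_G(1))^t = 0$ forces $\delta \le t-1$. Factoring $f$ through its image $Y := f(X)$ of dimension $\delta$ as $f = \iota\circ g$, for $i \ge t$ we get $c_i(\E) = f^* c_i(\mathcal Q) = g^*\bigl(c_i(\mathcal Q)|_Y\bigr)$, and $c_i(\mathcal Q)|_Y \in H^{2i}(Y,\Q) = 0$ since $2i > 2\delta = 2\dim Y$. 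Hence $c_i(\E)=0$ cohomologically, so $c_i(\E)H^{n-i}=0$ and Lemma \ref{gg}(ii) upgrades this to $c_i(\E)=0$ in $A^i(X)$. The only delicate point is that $Y$ may be singular, which is exactly why I pass to singular cohomology (where $H^k(Y)=0$ for $k>2\dim Y$) rather than try to manipulate a Chow pullback along the possibly non-flat $g$.

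For (ii) the parenthetical clause is immediate from (i) by contraposition: $c_t(\E)\ne 0$ gives $c_1(\E)^t\ne 0$. For the main statement, the essential observation is that $\det\E$ is globally generated, hence semiample, with $\nu(\det\E)\ge t$ (from $c_1(\E)^t\ne 0$ via Lemma \ref{gg}(i)). I would then invoke the numerical-dimension form of Kawamata--Viehweg vanishing for the nef line bundle $L=\det\E$, namely $H^i(X, K_X + L)=0$ for $i > n - \nu(L)$; by Serre duality this reads $H^j(X,-\det\E)=0$ for $j < \nu(\det\E)$, and $\nu(\det\E)\ge t$ yields the claim for $0\le j\le t-1$. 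The main obstacle is precisely this vanishing when $\det\E$ is \emph{not} big (i.e. $\nu<n$): ordinary Kodaira or Kawamata--Viehweg vanishing does not apply, and one must use the numerical refinement, which is legitimate here because semiample bundles are abundant ($\nu=\kappa$). A more self-contained alternative would factor $L=g^*A$ through the semiample fibration $g:X\to Z$ (with $Z$ normal of dimension $\nu$ and $A$ ample) and combine relative Kodaira vanishing with Kodaira vanishing on $Z$ through the Leray spectral sequence, at the cost of controlling the singularities of $Z$.
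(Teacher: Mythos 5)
Your proposal is correct, but parts (i) and (ii) follow genuinely different routes from the paper's. For (i), the paper restricts $\E$ to a general complete-intersection $t$-fold $X_t$, where $c_1(\E_{|X_t})^t=0$, invokes \cite[Cor.~2.7]{dps} to get $c_t(\E)H^{n-t}=c_t(\E_{|X_t})=0$, and then concludes with Lemma \ref{gg}(ii); you instead factor the Grassmannian morphism through its image $Y$ and kill $c_i(\E)$ topologically because $H^{2i}(Y,\Q)=0$ for $2i>2\dim Y$. This trades the DPS citation for an argument in the singular cohomology of a possibly singular image (which you rightly identify as the delicate point), and both arguments re-enter the Chow group through the same Lemma \ref{gg}(ii). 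For (ii), the paper runs an induction on $n$ via hyperplane sections, using $0 \to \O_X(-D-H) \to \O_X(-D) \to \O_{X_{n-1}}(-D) \to 0$ together with Kodaira vanishing for the ample $D+H$, until it reaches the base case $n=t$, where $D$ is big and nef and ordinary Kawamata--Viehweg applies; you instead invoke in one stroke the vanishing $H^i(K_X+L)=0$ for $i>n-\nu(L)$, valid for nef and \emph{abundant} $L$, plus Serre duality and $\nu(\det\E)\ge t$ from Lemma \ref{gg}(i). Note that this refined vanishing is not a theorem for merely nef $L$, so your caveat that $\det\E$ is semiample, hence abundant, is exactly what makes the appeal legitimate (alternatively one gets it from Koll\'ar's theorems applied to the semiample fibration, as in your fallback sketch); your route is shorter but rests on a deeper vanishing theorem, whereas the paper only needs the classical big-and-nef case. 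Part (iii) is essentially the paper's argument: the paper simply cites \cite{su} and \cite{o2} for the splitting that you spell out explicitly, and the concluding Chern-class count is identical.
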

\begin{proof}
Assume that $c_1(\E)^t = 0$. The conclusion (i) being obvious if $n \le t-1$, suppose that $n \ge t$. Then $c_1(\E_{|X_t})^t=c_1(\E)^tH^{n-t} = 0$, hence $c_t(\E)H^{n-t}=c_t(\E_{|X_t})=0$ by \cite[Cor.~2.7]{dps}. Therefore $c_t(\E) = 0$ by Lemma \ref{gg}(ii) and then (i) holds by the same lemma. To see (ii), first observe that if $c_t(\E) \ne 0$, then $c_1(\E)^t \ne 0$ by (i). Hence, to prove (ii), we can assume that $c_1(\E)^t \ne 0$, and we have, in particular, that $n \ge t$. Set $\det \E = \O_X(D)$. We prove that $H^j(\O_X(-D))=0$ for $0 \le j \le t-1$ by induction on $n$. If $n=t$, we know that $D^t>0$, hence $D$ is big and nef and therefore $H^j(\O_X(-D))=0$ by Kawamata-Viehweg's vanishing theorem. If $n \ge t+1$, note that $c_1(\E_{|X_{n-1}})^t=c_1(\E)^tH \ne 0$, for otherwise also $c_1(\E)^tH^{n-t}=0$, implying, by Lemma \ref{gg}(ii), the contradiction $c_1(\E)^t=0$. Then, in the exact sequence
$$0 \to \O_X(-D-H) \to \O_X(-D) \to \O_{X_{n-1}}(-D) \to 0$$
we have that $H^j(\O_{X_{n-1}}(-D))=H^j(-\det (\E_{|X_{n-1}}))=0$ by the inductive hypothesis and also $H^j(\O_X(-D-H))=0$ by Kodaira vanishing. Hence we find that $H^j(\O_X(-D))=0$, proving (ii). Finally, the first part of (iii) is well-known (see for example \cite[Proof of Lemma 3]{su} or use \cite[Lemma 3.9]{o2} and induction). Now, when $c_k(\E) \ne 0$ we have that $c_k(\E_1)=c_k(\E) \ne 0$ and therefore $r-s \ge k$.
\end{proof}

We observe that V-bigness implies non-vanishing of Chern classes, hence also non-emptiness and connectedness (via Theorem \ref{connesse}).

\begin{prop}
\label{b+}
Let $k$ be an integer such that $1 \le k \le \min\{r,n\}$. Let $\E$ be a rank $r$ vector bundle on $X$ with $\B_+(\E) \ne X$. We have:
\begin{itemize}
\item[(i)] If $\E$ is nef, then $c_k(\E) \ne 0$.
\item[(ii)] If $\E$ is globally generated, then $H^0(\E^*)=0$.
\item[(iii)] If $\E$ is globally generated, then $D_{r-k}(\varphi) \ne \emptyset$ for any injective morphism $\varphi : \O_X^{\oplus (r+1-k)} \to \E$.
\end{itemize}
\end{prop}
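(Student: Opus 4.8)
The plan is to treat the three parts in increasing order of difficulty, using the key feature of $\B_+(\E) \neq X$: there is a point (in fact a dense open set) where $\E$ is ``positively enough" generated. For part (i), the strategy is to reduce to a zero-dimensional cut. Since $\B_+(\E) \neq X$, by definition $\E$ is V-big, and I would use the characterization that V-bigness implies bigness of $\det\E$ together with nefness to control top self-intersections. Concretely, I would restrict to a general complete intersection surface $X_k = H_1 \cap \dots \cap H_{n-k}$ of dimension $k$ (using the notation of the paper) so that $c_k(\E)H^{n-k} = c_k(\E_{|X_k})$, and then argue that for a nef, V-big bundle the top Chern class on the cut cannot vanish. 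The cleanest route is to invoke the general theory: a nef vector bundle $\E$ with $\B_+(\E) \neq X$ has $\det \E$ big, and more is true, the restriction $\E_{|X_k}$ is nef and big on a $k$-fold, forcing $\int_{X_k} c_k(\E_{|X_k}) > 0$ by the nef-and-big positivity of Chern numbers. Then Lemma \ref{gg}(ii) converts $c_k(\E)H^{n-k} \neq 0$ into $c_k(\E) \neq 0$.

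\smallskip

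For part (ii), suppose for contradiction that $H^0(\E^*) \neq 0$, so there is a nonzero section $\O_X \to \E^*$, dually a surjection-onto-line-subbundle type map, or more precisely a nonzero map $s : \E \to \O_X$. The idea is that such an $s$ trivializes a piece of $\E$ and this is incompatible with V-bigness. By Lemma \ref{c3no}(iii), if $s = h^0(\E^*) \geq 1$ then $\E \cong \O_X^{\oplus s} \oplus \E_1$ as globally generated bundles. I would then show that a direct summand $\O_X^{\oplus s}$ forces $\B_+(\E) = X$: the trivial summand contributes no positivity, so for every $p/q$ the stable base locus $\B((S^q\E)(-pA))$ contains the whole of $X$ because the $\O_X$-summand, twisted down by the ample $A$, has empty base locus of global sections. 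More carefully, $(S^q\E)(-pA)$ contains $\O_X(-pA)$ as a summand-derived piece, which has no sections for $p>0$, so the evaluation map cannot be surjective anywhere, giving $\Bs = X$ at every level and hence $\B_+(\E) = X$, contradicting the hypothesis.

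\smallskip

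Part (iii) is the natural payoff and should follow by combining (i) with the earlier non-emptiness machinery. Since $\E$ is globally generated and $\B_+(\E) \neq X$, part (i) gives $c_k(\E) \neq 0$. For a \emph{general} morphism $\varphi$, Lemma \ref{gg} (equivalence of (iii)–(v)) immediately yields $D_{r-k}(\varphi) \neq \emptyset$. The content of part (iii) is upgrading this from ``general $\varphi$" to ``any injective $\varphi$", so the plan is to use the stable-base-locus positivity directly: for a V-big globally generated $\E$, the degeneracy locus of any injective $\varphi$ meets the V-big locus, and one shows set-theoretically that $D_{r-k}(\varphi) \supseteq$ the locus carved out by $c_k$, which is nonempty by (i). I would phrase this via the dimension estimate: $D_{r-k}(\varphi)$ has expected codimension $k \leq n$, and the positivity from $\B_+(\E) \neq X$ prevents it from being empty, in the spirit of the Fulton–Lazarsfeld non-emptiness theorem \cite{fl} adapted to the V-big setting (this is exactly the announced generalization of \cite[Thm.~II(a)]{fl}).

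\smallskip

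The main obstacle I anticipate is part (iii) for \emph{arbitrary} injective $\varphi$ rather than general ones, and establishing the correct positivity statement in part (i) without a clean off-the-shelf ``nef $+$ V-big $\Rightarrow$ positive Chern numbers" reference. The delicate point is that V-bigness is a condition on $\B_+$, which must be translated into an intersection-theoretic positivity on the cut $X_k$; this likely requires either a Nakai–Moishezon-type criterion for vector bundles or a careful reduction to the projectivization $\P(\E)$ where $\B_+(\E)$ corresponds to the augmented base locus of the tautological bundle $\O_{\P(\E)}(1)$, after which the scalar theory of big and nef line bundles applies.
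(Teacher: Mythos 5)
Your part (i) is the crux and it contains a genuine gap. The statement ``nef and V-big on a $k$-fold forces $\int c_k>0$'' is not an off-the-shelf fact: it is precisely Proposition \ref{b+}(i) in the case $k=n$, so restricting to a general complete intersection $X_k$ only restates the problem. Your alternative reductions do not close it either: bigness of $\det\E$ together with nefness controls $c_1^n$ but not intermediate Chern classes (e.g.\ $\O_{\P^n}(1)\oplus\O_{\P^n}$ is nef with $c_1^n>0$ and $c_2=0$; it is not V-big, but that is exactly the information your reduction discards), and passing to $\P(\E)$ relates $\B_+(\E)$ to $\B_+(\O_{\P(\E)}(1))$ but the formula expressing $c_k$ in terms of pushforwards of powers of the tautological class involves signs, so positivity does not transfer. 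The paper's actual mechanism is pointwise: for $x\notin\B_+(\E)$ one has $\varepsilon(\E;x)>0$ (Fulger--Murayama), hence after blowing up at $x$ the $\Q$-twist $(\mu^*\E)\langle -tE\rangle$ is nef for some $t>0$; nefness of Chern classes of $\Q$-twisted nef bundles then gives $c_k(\E)\cdot Z\ge t^k\binom{r}{k}\mathrm{mult}_x(Z)>0$ for every $k$-dimensional $Z$ through $x$. Some argument of this type (or an equivalent Seshadri-constant input) is unavoidable and is missing from your proposal. Relatedly, in (iii) you correctly identify that the issue is arbitrary versus general $\varphi$, but your resolution (``meets the V-big locus'', ``in the spirit of Fulton--Lazarsfeld'') is not an argument; the actual step is elementary: if $D_{r-k}(\varphi)=\emptyset$ then $\varphi$ has everywhere maximal rank $r+1-k$, its cokernel $\F$ is a bundle of rank $k-1$, and $c_k(\E)=c_k(\F)=0$, contradicting (i). No positivity beyond (i) is needed.

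Your part (ii), on the other hand, is correct and genuinely different from the paper's proof. You observe that a trivial direct summand $\O_X^{\oplus s}$ of $\E$ forces $\O_X(-mpA)$ to appear as a direct summand of $S^m((S^q\E)(-pA))$ for all $m,q\ge1$ and $p>0$, so the evaluation map is nowhere surjective and $\B((S^q\E)(-pA))=X$ for every $p/q\in\Q^{>0}$, whence $\B_+(\E)=X$. This is more elementary than the paper's route, which again goes through Seshadri constants and the identity $\varepsilon(\O_X^{\oplus s}\oplus\E_1;x)=\min\{\varepsilon(\O_X^{\oplus s};x),\varepsilon(\E_1;x)\}=0$. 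Your argument for (ii) would be a perfectly acceptable substitute; the proposal stands or falls on supplying the missing positivity input for (i).
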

\begin{proof}
We will use $\Q$-twisted bundles and their Chern classes, see \cite[\S 6.2, 8.1, 8.2]{la2}. We first observe that if $\F \langle \delta \rangle$ is a $\Q$-twisted nef bundle on $X$, then $c_k(\F \langle \delta \rangle)$ is nef. Indeed, this follows from \cite[Proof of Prop.~2.1 and Cor.~2.2]{dps}, since the proof works for $\R$-twisted nef vector bundles. Alternatively, for any $k$-dimensional irreducible subvariety $Y \subset X$, the restriction $(\F \langle \delta \rangle)_{|Y}$ is still nef by \cite[Thm.~6.2.12(v)]{la2}. Then, it is enough to apply \cite[Thm.~8.2.1]{la2} to obtain that $c_k(\F \langle \delta \rangle)\cdot Y = c_k((\F \langle \delta \rangle)_{|Y}) \ge 0$, as desired. We now use Seshadri constants $\varepsilon(\E; x)$, in particular \cite[Rmk.~3.10(c)]{fm}. To see (i), let $x \not\in \B_+(\E)$ and let $\mu : \widetilde X \to X$ be the blowing up of $X$ at $x$, with exceptional divisor $E$. It follows from \cite[Prop.~6.9 and Rmk.~3.10(c)]{fm} that  $\varepsilon(\E; x)>0$, hence it is easy to see that we can find $t \in \Q_{>0}$ such that the $\Q$-twisted vector bundle $(\mu^\ast\E)\langle -tE \rangle$ is nef. If follows from the observation above, that $c_k((\mu^\ast\E) \langle -tE \rangle)$ is nef. Let $Z$ be any irreducible subvariety with $x \in Z \subseteq X, \dim Z=k$ and consider its strict transform $\widetilde Z$ on $\widetilde X.$ Then, as in \cite[Proof of Thm.~7.2]{lo}, we have 
$$0 \le c_k((\mu^\ast \E) \langle -tE \rangle) \cdot \widetilde Z = c_k(\E) \cdot Z-t^k\binom{r}{k} \mathrm{mult}_x(Z)$$ so that $c_k(\E) \ne 0$ and (i) is proved. To see (ii), assume that $s=h^0(\E^*) \ge 1$. By Lemma \ref{c3no}(iii) we have $\E=\O_X^{\oplus s} \oplus \E_1$, with $\E_1$ a globally generated bundle. Since $\E$ is nef and $\B_+(\E)\ne X$, it follows  by \cite[Prop.~6.9]{fm} that there is $x \in X$ such that $\varepsilon(\E; x)>0$. However, as $\varepsilon(\E_1; x) \ge 0$ by the nefness of $\E_1$ \cite[Rmk. 3.10(a)]{fm}, we get by \cite[Lemma 3.31]{fm} that 
$$0 < \varepsilon(\E; x) = \min\{\varepsilon(\O_X^{\oplus s}; x), \varepsilon(\E_1; x)\} = \varepsilon(\O_X^{\oplus s}; x)=0$$ 
a contradiction. This proves (ii). As for (iii), assume that $D_{r-k}(\varphi)= \emptyset$. If $k=1$ then $\varphi$ is an isomorphism and we get that $c_1(\E)=0$, contradicting (i). If $k \ge 2$, we have an exact sequence
$$0 \to \O_X^{\oplus (r+1-k)} \to \E \to \F \to 0$$
where $\F$ is a rank $k-1$ bundle on $X$, hence $c_k(\E)=c_k(\F)=0$, contradicting (i). This proves (iii).
\end{proof}

We add a calculation of the $n$-th Segre class. This will be useful to detect bigness in some examples.

\begin{lemma}
\label{seg}
Let $n \ge 2$ and let $\E$ be a globally generated bundle on $X$ such that $c_2(\E)^2=0$ and $c_3(\E)=0$. Then
$$s_n(\E^*)=c_1(\E)^n-(n-1)c_1(\E)^{n-2}c_2(\E).$$
In particular $\E$ is big if and only if $c_1(\E)^n>(n-1)c_1(\E)^{n-2}c_2(\E)$.
\end{lemma}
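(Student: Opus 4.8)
The plan is to first collapse the total Chern class of $\E$ to $1+c_1(\E)+c_2(\E)$ and then to invert it. The key preliminary observation is that
\[
c_i(\E)=0 \quad\text{for all } i\ge 3.
\]
This is automatic when $r\le 2$ (there is nothing of positive codimension beyond $r$ in the relevant sense) or when $n\le 2$ (there are no nonzero cycles of codimension $>n$). When $r\ge 3$ and $n\ge 3$, so that $3\le\min\{r,n\}$, I would apply the equivalence of conditions (iv) and (v) in Lemma \ref{gg} with $k=3$: since $\E$ is globally generated and $c_3(\E)=0$, a general morphism $\varphi:\O_X^{\oplus(r-2)}\to\E$ has $D_{r-3}(\varphi)=\emptyset$, which is equivalent to $c_i(\E)=0$ for every $i\ge 3$. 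Concretely, such a $\varphi$ has constant rank $r-2$ and realizes $\E$ as an extension of a rank $2$ bundle by $\O_X^{\oplus(r-2)}$, making the vanishing of all higher Chern classes transparent.

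Granting this, one has $c(\E^*)=1-c_1(\E)+c_2(\E)$, because $c_i(\E^*)=(-1)^ic_i(\E)$ and $c_i(\E)=0$ for $i\ge 3$. The Segre class is the formal inverse $s(\E^*)=c(\E^*)^{-1}=\sum_{m\ge 0}(c_1(\E)-c_2(\E))^m$. Expanding each power by the binomial theorem and using $c_2(\E)^2=0$ to discard every term in which $c_2(\E)$ occurs with multiplicity $\ge 2$, I get $(c_1(\E)-c_2(\E))^m=c_1(\E)^m-m\,c_1(\E)^{m-1}c_2(\E)$. Collecting the two monomials of total degree $n$, which come from $m=n$ and from $m=n-1$ respectively, yields
\[
s_n(\E^*)=c_1(\E)^n-(n-1)\,c_1(\E)^{n-2}c_2(\E),
\]
as claimed. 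This step is routine bookkeeping; the only care needed is to be sure that no other degree-$n$ monomial survives, which is precisely what the two hypotheses $c_2(\E)^2=0$ and $c_{\ge 3}(\E)=0$ guarantee.

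For the final assertion I would invoke the standard criterion relating bigness of a nef bundle to its top Segre class. Since $\E$ is globally generated it is nef, so $\O_{\P(\E)}(1)$ is a nef line bundle on $\pi:\P(\E)\to X$, a variety of dimension $n+r-1$; a nef line bundle is big if and only if its top self-intersection is positive, and by the projection formula and the definition of Segre classes this top self-intersection equals $\int_X s_n(\E^*)$ (see \cite[\S 8]{la2}). Hence $\E$ is big if and only if $s_n(\E^*)>0$, i.e. if and only if $c_1(\E)^n>(n-1)c_1(\E)^{n-2}c_2(\E)$. The point demanding the most attention here is the bookkeeping of the projectivization convention, so that it is the Segre class of the \emph{dual} that appears; I would pin this down with a sanity check on a line bundle $\E=\O_X(D)$, where the formula reads $s_n(\E^*)=c_1^n=D^n$ and reproduces the usual criterion $D^n>0$, thereby fixing the sign.
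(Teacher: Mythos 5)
Your proposal is correct and follows essentially the same route as the paper: both arguments first deduce $c_i(\E)=0$ for $i\ge 3$ from Lemma \ref{gg}, then identify $s_n(\E^*)$ with the degree-$n$ part of $(1-c_1(\E)+c_2(\E))^{-1}$ (the paper packages this as the Schur polynomial $s_{1^n}(\E)$ computed by a determinant recursion, whereas you expand the geometric series directly and truncate with $c_2(\E)^2=0$ --- an equivalent computation), and finally invoke the criterion that a nef bundle is big if and only if $s_n(\E^*)>0$. Your extra care about the edge cases $r\le 2$ and $n\le 2$ when applying Lemma \ref{gg}, and your unwinding of the bigness criterion that the paper simply cites from \cite[Rmk.~2.2]{lm}, are both sound.
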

\begin{proof}
For $n \ge 2$, consider the $n \times n$ determinant
$$P_n(x_1,x_2)=
\begin{vmatrix}
x_1 & x_2 & 0 & 0 & \dots & 0 & 0 & 0 \\ 
1 & x_1 & x_2 & 0& \dots & 0 & 0 & 0 \\
0 & 1 & x_1 & x_2 & \dots & 0 & 0 & 0 \\
\hdotsfor{8} \\
0 & 0 & 0 & 0 & \dots & 1 & x_1 & x_2 \\
0 & 0 & 0 & 0 & \dots & 0 & 1 & x_1
\end{vmatrix}.
$$
It is easily seen that $P_2(x_1,x_2)=x_1^2-x_2, P_3(x_1,x_2)=x_1^3-2x_1x_2$ and that, for $n \ge 4$,
\begin{equation}
\label{pol}
P_n(x_1,x_2)=x_1P_{n-1}(x_1,x_2)-x_2P_{n-2}(x_1,x_2).
\end{equation}
Setting $c_i=c_i(\E)$ and using \eqref{pol}, it follows by induction on $n \ge 2$ that 
$$P_n(c_1,c_2)=c_1^n-(n-1)c_1^{n-2}c_2.$$
On the other hand, we have that $c_i(\E)=0$ for $i \ge 3$ by Lemma \ref{gg}, hence $P_n(c_1,c_2)$ is just the Schur polynomial of $\E$ associated to the partition $1^n=(1,\ldots,1)$ and therefore (see for example \cite[Exa.~8.3.5]{la2}), we get that 
$$s_n(\E^*)=s_{1^n}(\E)=P_n(c_1,c_2)=c_1^n-(n-1)c_1^{n-2}c_2.$$
As is well-known (see for example \cite[Rmk.~2.2]{lm}), $\E$ is big if and only if $s_n(\E^*)>0$, thus if and only if $c_1(\E)^n>(n-1)c_1(\E)^{n-2}c_2(\E)$.
\end{proof}

The following lemma, relating some degeneracy loci (see also \cite[Rmk.~3.4]{cfk}), will be crucial in the proof of Theorem \ref{connesse}.

\begin{lemma} 
\label{zeta1}
Let $n \ge 2$ and let $\E$ be a rank $r$ globally generated bundle on $X$ such that $c_2(\E) \ne 0$. Let $V \subset H^0(\E)$ be a general subspace with $\dim V = r-1$, let $\varphi : V \otimes \O_X  \to \E$, so that $Z = D_{r-2}(\varphi) \ne \emptyset$. Then there is a normal irreducible Cartier divisor $Y \in |\det(\E)|$, smooth if $n \le 3$, such that $Z \subset Y$. Moreover, if $Y$ is smooth, we have an exact sequence
\begin{equation}
\label{incl2}
0 \to \E^* \to \O_X^{\oplus r} \to \O_Y(Z) \to 0
\end{equation}
and $\O_Y(Z)$ is globally generated by at most $r$ sections. Also, using, on the right hand side, the intersection product on $Y$ and a very ample divisor $H$, we have, if $n \ge 3$, that 
\begin{equation}
\label{c33}
c_3(\E)H^{n-3}=Z^2H_{|Y}^{n-3}.
\end{equation}
\end{lemma}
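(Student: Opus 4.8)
The plan is to produce $Y$ as a corank-one degeneracy locus obtained by adjoining one more general section to $\varphi$, and then to read off all assertions from the associated length-one resolution together with Grothendieck--Riemann--Roch. Throughout set $c_i=c_i(\E)$.

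\textbf{Construction and geometry of $Y$.} Write $\varphi=(s_1,\dots,s_{r-1})$ with the $s_i$ spanning $V$, pick a general $s_r\in H^0(\E)$, and put $\varphi'=(s_1,\dots,s_r):\O_X^{\oplus r}\to\E$ and $Y=D_{r-1}(\varphi')$. Since $\det\varphi'=s_1\wedge\dots\wedge s_r\in H^0(\det\E)$ has zero scheme $Y$, we have $Y\in|\det\E|$ and $Y$ is Cartier. If $x\in Z$ then $s_1(x),\dots,s_{r-1}(x)$ span at most an $(r-2)$-dimensional space, so $s_1(x),\dots,s_r(x)$ span at most an $(r-1)$-dimensional one and $x\in Y$; hence $Z\subset Y$. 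As $\SHom(\O_X^{\oplus r},\E)=\E^{\oplus r}$ is globally generated and $s_r$ is general, $\varphi'$ is general, so Lemma~\ref{red} (with $e=f=r$, $k=r-1$) shows $Y$ is reduced, normal, of pure codimension $1$, regular in codimension $2$ with $\Sing Y=D_{r-2}(\varphi')$; in particular $Y$ is smooth once $\dim Y=n-1\le 2$, i.e. for $n\le3$. For irreducibility, note $c_2\ne0$ forces $c_1^2\ne0$ by Lemma~\ref{c3no}(i), whence $H^0(-\det\E)=H^1(-\det\E)=0$ by Lemma~\ref{c3no}(ii) with $t=2$; the cohomology sequence of $0\to\O_X(-Y)\to\O_X\to\O_Y\to0$, where $\O_X(-Y)=-\det\E$, then gives $H^0(\O_Y)\cong H^0(\O_X)=\C$, so $Y$ is connected, and being normal it is irreducible.

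\textbf{The resolution \eqref{incl2}.} Assume $Y$ smooth. Since $\varphi'$ is generically surjective, $\varphi'^*:\E^*\to\O_X^{\oplus r}$ is injective with cokernel $\mathcal C$ supported on $Y$. On the smooth $Y$ one has $\Sing Y=D_{r-2}(\varphi')=\emptyset$, so $\varphi'|_Y$ has constant rank $r-1$; writing $K=\ker(\varphi'|_Y)$, a line bundle, and dualizing the two short exact sequences into which the constant-rank sequence splits, one finds $\Coker(\varphi'^*|_Y)=K^*$. The $0$-th Fitting ideal of $\mathcal C$ is $(\det\varphi'^*)=\I_Y$ (equality because $Y$ is reduced), so $\mathcal C$ is an $\O_Y$-module and thus $\mathcal C=\mathcal C\otimes_{\O_X}\O_Y=K^*$ is a line bundle on $Y$. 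Finally, composing $K\hookrightarrow\O_Y^{\oplus r}$ with the projection to the $s_r$-factor gives a section of $K^*$ whose zero locus is exactly where $s_1,\dots,s_{r-1}$ degenerate, namely $Z$; by generality this zero divisor is reduced, so $K^*=\O_Y(Z)$. This is \eqref{incl2}, and as $\O_Y(Z)=\mathcal C$ is a quotient of $\O_X^{\oplus r}$ it is globally generated by at most $r$ sections.

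\textbf{The formula \eqref{c33}.} Take first $n=3$, so $Y$ is a smooth surface and $Z$ a divisor on it with $z:=[Z]_Y$, $j:Y\hookrightarrow X$, $j_*z=c_2$ and $N_{Y/X}=\det\E|_Y$. From \eqref{incl2}, $[j_*\O_Y(Z)]=r-[\E^*]$ in $K$-theory, so $\ch_3(j_*\O_Y(Z))=\ch_3(\E)=\tfrac16c_1^3-\tfrac12c_1c_2+\tfrac12c_3$. On the other hand Grothendieck--Riemann--Roch gives $\ch(j_*\O_Y(Z))=j_*\bigl(e^{z}\,\td(N_{Y/X})^{-1}\bigr)$, whose degree-three part, using $j_*(z^2)=Z^2$, $j_*(z\,j^*c_1)=c_1c_2$ and $j_*((j^*c_1)^2)=c_1^3$, equals $\tfrac12Z^2-\tfrac12c_1c_2+\tfrac16c_1^3$. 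Comparing yields $Z^2=c_3$, i.e. \eqref{c33} for $n=3$. For $n\ge3$ intersect with $n-3$ general hyperplanes $X_3=H_1\cap\dots\cap H_{n-3}$: by Bertini $X_3$ and $Y\cap X_3$ are smooth, $\E|_{X_3}$ is globally generated with $c_3(\E|_{X_3})=c_3H^{n-3}$, and $Z\cap X_3=D_{r-2}(\varphi|_{X_3})$ is a divisor on the surface $Y\cap X_3$ of self-intersection $Z^2H_{|Y}^{n-3}$; the case $n=3$ applied on $X_3$ gives \eqref{c33}.

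The main obstacle is the identification $\mathcal C=\O_Y(Z)$ with the correct reduced structure: it requires the constant-rank analysis of $\varphi'|_Y$, the Fitting-ideal argument that $\mathcal C$ is a genuine line bundle on $Y$, and the transversality ensuring the distinguished section vanishes simply along $Z$. Once this is in place the Grothendieck--Riemann--Roch bookkeeping for \eqref{c33} is routine.
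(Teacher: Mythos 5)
Your proof is correct in outline and reaches all the assertions, but both technical hearts of the argument are done by a genuinely different route than the paper's. For \eqref{incl2}, the paper fits $\varphi$ and $\varphi'$ into a snake-lemma diagram, identifies the cokernel $\L$ of $\varphi'$ with $\I_{Z/Y}(\det\E)$, and only then computes $\SExt^1_{\O_X}(\L,\O_X)\cong\O_Y(Z)$ by duality before dualizing the sequence; you dualize first and identify $\Coker(\varphi'^*)$ directly as the dual of the kernel line bundle $K$ of $\varphi'|_Y$, using the Fitting ideal to see it is an $\O_Y$-module. For \eqref{c33}, the paper computes the Euler characteristics of all three terms of \eqref{ze2} by Riemann--Roch and cancels; you apply Grothendieck--Riemann--Roch to the K-theory identity $[j_*\O_Y(Z)]=r-[\E^*]$ and compare $\ch_3$, which is shorter and avoids the $\chi(\O_{Y'})$ bookkeeping (your coefficients check out). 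Your connectedness argument for $Y$ via $H^0(\O_Y)\cong H^0(\O_X)$, using Lemma \ref{c3no}(ii) with $t=2$, likewise replaces the paper's Bertini argument; both work.

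The one thin spot is exactly the step you flag yourself: the identification $K^*\cong\O_Y(Z)$ as divisors, i.e. that the composite $K\hookrightarrow\O_Y^{\oplus r}\to\O_Y$ (projection to the $s_r$-factor) vanishes \emph{simply} along $Z$. ``By generality this zero divisor is reduced'' is not by itself a proof: the section is not a general member of a linear system chosen independently of $Z$ --- it is pinned to the splitting $V'=V\oplus\langle s_r\rangle$, and $Z$ moves with that choice, so Bertini does not apply verbatim. The claim is true and can be patched in either of two ways. Locally the columns of $\mathrm{adj}(\varphi')$ restricted to $Y$ are sections of $K$, hence multiples $f_j\cdot(a_1,\dots,a_r)$ of a local generator of $K$; their last entries are (up to sign) the $(r-1)\times(r-1)$ minors of $\varphi$, so $\I_{Z/Y}=(a_r)\cdot(f_1,\dots,f_r)=(a_r)$ because $\mathrm{adj}(\varphi')$ is nowhere zero on the smooth $Y$ and so the $f_j$ have no common zero; this gives the scheme-theoretic equality with no genericity needed. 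Alternatively, fix $V'$ and note that as the hyperplane $V\subset V'$ varies the corresponding zero divisors sweep out the base-point-free system spanned by the image of $V'^*$ in $H^0(K^*)$, so for general $V$ the divisor is reduced and, having the same support as the reduced scheme $D_{r-2}(\varphi)$, coincides with it. With either patch in place the rest of your argument, including the cutting-down to $n=3$, goes through.
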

\begin{proof} 
Note that $Z \ne \emptyset$ by Lemma \ref{gg}, so that $Z$ is reduced of pure codimension $2$ by Lemma \ref{red}. Moreover, if $Z$ is singular, then $\Sing(Z)=D_{r-3}(\varphi)$. Also, the Eagon-Northcott complex gives is an exact sequence
\begin{equation}
\label{succ}
0 \to V \otimes \O_X \to \E \to \I_{Z/X}(D) \to 0.
\end{equation}
Choose a general subspace $V' \subset H^0(\E)$ such that $\dim V' = r$ and $V \subset V'$. Thus, we get a general morphism $\varphi' : V' \otimes \O_X  \to \E$ and setting $Y=D_{r-1}(\varphi')$ we see that $Z \subset Y$: If $x \in Z$, then $\rk \varphi(x) \le r-2$, hence $\rk \varphi'(x) \le r-1$, so that $x \in Y$. In particular $Y$ is nonempty and Lemma \ref{red} gives that $Y$ is smooth if $n \le 3$. Note that $c_1(\E)^2 \ne 0$, for otherwise $c_2(\E)=0$ by Lemma \ref{c3no}(i). Therefore the morphism $\varphi_{\det \E} : X \to \P H^0(\det \E)$ is not composed with a pencil and $Y \in |\det(\E)|$ is connected by Bertini's theorem. Since $Y$ is normal by Lemma \ref{red}, it is irreducible. Now assume that $Y$ is smooth, so that $D_{r-2}(\varphi')=\emptyset$ by Lemma \ref{red}. Moreover also $Z$ is smooth, because, as above, we have that $D_{r-3}(\varphi) \subset D_{r-2}(\varphi')$. Using \eqref{succ}, we have a commutative diagram
$$\xymatrix{& 0 \ar[d] & 0 \ar[d] & & \\ 0 \ar[r] & V \otimes \O_X \ar[d] \ar[r]^{\hskip .4cm \varphi} & \E \ar[d] \ar[r] & \I_{Z/X}(D)  \ar[r] \ar[d] & 0 & \\ 0 \ar[r] & V' \otimes \O_X \ar[r]^{\hskip .4cm \varphi'} \ar[d] & \E \ar[r] \ar[d] & \L \ar[r] \ar[d] & 0 &  \\ & \O_X \ar[d] & 0 & 0 & \\ & 0 & & & }$$
where $\L$ is a sheaf supported on $Y$. The diagram shows that $\L$ is a line bundle on $Y$, since $\rk \varphi'(y)=r-1$ for every $y \in Y$. Also, we get an exact sequence
$$0 \to \O_X \to \I_{Z/X}(D) \to \L \to 0$$
and the map $\O_X \to \I_{Z/X}(D)$ is given by the section defining $Y$. Therefore $\L \cong \I_{Z/Y}(D)$. We will now use the well-known fact that the proof of \cite[Lemma III.7.4]{h1} works also for the sheaf version with $\SHom$ and $\SExt$. Using it, we get 
$$\SExt^1_{\O_X}(\L,\O_X) \cong \SExt^1_{\O_X}( \L,\omega_X)(-K_X) \cong \SHom_{\O_Y}( \L,\omega_Y)(-K_X) \cong \O_Y(K_Y+Z-D)(-K_X) \cong \O_Y(Z).$$ 
Hence, dualizing the exact sequence, obtained in the diagram above,
$$0 \to V' \otimes \O_X \to \E \to \L \to 0$$
we get the exact sequence
$$0 \to \E^* \to \O_X^{\oplus r} \to \O_Y(Z) \to 0$$
showing \eqref{incl2} and that $\O_Y(Z)$ is globally generated by at most $r$ sections. Finally, to see \eqref{c33}, if $n=3$, we set $X'=X, \E'=\E$ and $Z'=Z$. If $n \ge 4$, cutting down with $n-3$ general $H_1, \ldots, H_{n-3} \in |H|$, we get a smooth $3$-fold $X'=X_3$ and a globally generated bundle $\E'=\E_{|X_3}$. Setting $Z'=Z \cap H_1 \cap \ldots \cap H_{n-3}$, we have that 
$$Z' = Z \cap X'=D_{r-2}(\varphi) \cap X' =D_{r-2}(\varphi_{|X'})$$
so that, in particular, $[Z']=c_2(\E')$. Moreover, setting $Y'=Y \cap H_1 \cap \ldots \cap H_{n-3}$, we see that $Y' \in |\det \E'|$ is a smooth irreducible surface containing $Z'$ and \eqref{incl2} gives an exact sequence
\begin{equation}
\label{ze2}
0 \to (\E')^* \to \O_{X'}^{\oplus r} \to \O_{Y'}(Z') \to 0.
\end{equation}
We compute the Euler characteristics in \eqref{ze2}. To this end, we set $c_i=c_i(X')$ and $d_i=c_i(\E')$, for $1 \le i \le 3$. Now, Riemann-Roch on $X'$ gives
\begin{equation}
\label{pri2}
\chi((\E')^*)=r\chi(\O_{X'})-\frac{1}{12}d_1(c_1^2+c_2)+\frac{1}{4}c_1(d_1^2-2d_2)-\frac{1}{6}(d_1^3-3d_1d_2+3d_3).
\end{equation}
The exact sequence
\begin{equation}
\label{sec22}
0 \to \O_{X'}(-Y') \to \O_{X'} \to \O_{Y'} \to 0
\end{equation}
gives, using Riemann-Roch on $X'$,
\begin{equation}
\label{sec2}
\chi(\O_{Y'})=\chi(\O_{X'})-\chi(\O_{X'}(-Y'))=\frac{1}{12}d_1(d_1-c_1)(2d_1-c_1)+\frac{1}{12}d_1c_2.
\end{equation}
Moreover, by Riemann-Roch on $Y'$, we get, using adjunction, \eqref{sec2} and $[Z']=c_2(\E')$,
\begin{equation}
\label{ter2}
\begin{aligned}[t] 
\chi(\O_{Y'}(Z')) & = \chi(\O_{Y'})+\frac{1}{2}Z'(Z'-K_{Y'})=\chi(\O_{Y'})+\frac{1}{2}d_2(c_1-d_1)+\frac{1}{2}(Z')^2= \\
& = \frac{1}{12}d_1(d_1-c_1)(2d_1-c_1)+\frac{1}{12}d_1c_2+\frac{1}{2}d_2(c_1-d_1)+\frac{1}{2}(Z')^2.
\end{aligned}
\end{equation}
Therefore \eqref{ze2}, together with \eqref{pri2} and \eqref{ter2}, gives
$$\begin{aligned}[t] 
r\chi(\O_{X'}) & =\chi((\E')^*)+\chi(\O_{Y'}(Z'))= r\chi(\O_{X'})-\frac{1}{12}d_1(c_1^2+c_2)+\frac{1}{4}c_1(d_1^2-2d_2)-\frac{1}{6}(d_1^3-3d_1d_2+3d_3)+\\
& + \frac{1}{12}d_1(d_1-c_1)(2d_1-c_1)+\frac{1}{12}d_1c_2+\frac{1}{2}d_2(c_1-d_1)+\frac{1}{2}(Z')^2= r\chi(\O_{X'})-\frac{1}{2}d_3+\frac{1}{2}(Z')^2.
\end{aligned}$$
Hence $c_3(\E)H^{n-3}=c_3(\E')=d_3=(Z')^2=Z^2H_{|Y}^{n-3}$ and \eqref{c33} is proved. 
\end{proof}

We now give a fact that will be useful to study Ulrich subvarieties in the case of a linear Ulrich triple.

\begin{lemma}
\label{usub}
Let $X, B$ be two projective varieties with $X$ smooth and let $\pi : X \to B$ be a flat morphism with $\pi_*\O_X\cong\O_B$. Let $\E_B$ be a rank $r$ bundle on $B$ such that $\E=\pi^*\E_B$ is globally generated. Let $k \in \Z$ be such that $1 \le k \le r$, let $V \subseteq H^0(\E)$ be a general subspace of dimension $r+1-k$ and, if $\varphi : V \otimes \O_X \to \E$, assume that $Z:=D_{r-k}(\varphi)$ is nonempty. Then there exists $V_B \subset H^0(\E_B)$ a general subspace such that $V=\pi^* V_B$ and, if $\varphi_B : V_B \otimes \O_B \to \E_B$ is the associated morphism and $Z_B=D_{r-k}(\varphi_B)$, we have that  $\O_Z \cong \pi^*\O_{Z_B}$, $Z$ is the scheme-theoretic inverse image of $Z_B$ under $\pi$ and $Z_B$ has pure codimension $k$ in $B$.
\end{lemma}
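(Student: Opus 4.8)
The plan is to transfer the entire construction from $X$ to $B$ using $\pi_*\O_X \cong \O_B$, recognising $\varphi$ and $Z$ as $\pi$-pullbacks of data on $B$. First I would identify global sections. Since $\E = \pi^*\E_B$ with $\E_B$ locally free, the projection formula together with $\pi_*\O_X \cong \O_B$ gives $\pi_*\E \cong \E_B \otimes \pi_*\O_X \cong \E_B$, hence a canonical isomorphism $\pi^* : H^0(B,\E_B) \iso H^0(X,\E)$. Consequently a general subspace $V \subseteq H^0(\E)$ of dimension $r+1-k$ is exactly $\pi^*V_B$ for a correspondingly general subspace $V_B \subseteq H^0(\E_B)$ of the same dimension, and the evaluation morphism $\varphi : V \otimes \O_X \to \E$ is the pullback $\pi^*\varphi_B$ of the evaluation morphism $\varphi_B : V_B \otimes \O_B \to \E_B$. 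This already produces $V_B$ with the asserted genericity.

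Next I would identify the degeneracy loci scheme-theoretically. By Remark \ref{degeneracy}, $\I_{Z/X}$ is the image of the map $\Lambda^{r-k+1}(V \otimes \O_X) \otimes \Lambda^{r-k+1}\E^* \to \O_X$ induced by $\Lambda^{r-k+1}\varphi$, and likewise $\I_{Z_B/B}$ is cut out by the analogous map on $B$. Since $\varphi = \pi^*\varphi_B$ and exterior powers, tensor products and the formation of the minor map all commute with $\pi^*$, the map on $X$ is the $\pi$-pullback of the one on $B$; as $\pi^*$ is right exact, taking images commutes with pullback, so $\I_{Z/X} = \Img(\pi^*\I_{Z_B/B} \to \O_X)$. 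This is precisely the statement that $Z$ is the scheme-theoretic inverse image $\pi^{-1}(Z_B)$, and applying the right exact $\pi^*$ to $\I_{Z_B/B} \to \O_B \to \O_{Z_B} \to 0$ yields $\O_Z \cong \pi^*\O_{Z_B}$. In particular $Z_B \ne \emptyset$, since $Z \ne \emptyset$.

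It remains to show $Z_B$ has pure codimension $k$, which I expect to be the main point. The input is that $Z$ itself has pure codimension $k$ in $X$: indeed $\E$ globally generated makes $\SHom(V\otimes\O_X,\E)$ globally generated, $\varphi$ is a general such morphism and $Z \ne \emptyset$, so Lemmas \ref{gg} and \ref{red} apply. Now $\pi$ is flat, and $\pi_*\O_X \cong \O_B$ forces $\pi$ to be surjective and, as $X$ is irreducible, $B$ to be irreducible; hence every fibre of $\pi$ has pure dimension $e := \dim X - \dim B$ by the fibre-dimension theorem for flat morphisms. Writing $Z_B = \bigcup_i Z_{B,i}$ into irreducible components, flat base change along $Z_{B,i} \hookrightarrow B$ shows $\pi^{-1}(Z_{B,i})$ has dimension $\dim Z_{B,i} + e$ (flatness over the integral $Z_{B,i}$ makes every component dominate it, with $e$-dimensional generic fibre); since $\pi^{-1}(Z_{B,i}) \subseteq Z$ and $\dim Z = \dim X - k$, this gives $\dim Z_{B,i} \le \dim B - k$. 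For the reverse inequality I would pick a point of $Z_{B,i}$ lying on no other component, lift it through the surjection $\pi$ to a point of $Z$ lying on some component $Z_j$ of the pure-dimensional $Z$; then $Z_j$ necessarily dominates $Z_{B,i}$, and the fibre bound $\dim Z_j \le \dim Z_{B,i} + e$ forces $\dim Z_{B,i} \ge \dim B - k$. Hence every $Z_{B,i}$ has dimension $\dim B - k$, i.e.\ $Z_B$ has pure codimension $k$.

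The section identification and the minor computation are routine; the delicate step is the purity of $Z_B$, and within it the lower bound $\dim Z_{B,i} \ge \dim B - k$ for \emph{every} component. The subtlety is that $\pi^{-1}$ of a component of $Z_B$ need not a priori be a component of $Z$, so one must use simultaneously that $Z$ is pure-dimensional and that $\pi$ is flat and surjective with equidimensional fibres in order to push dimensions back and forth across $\pi$.
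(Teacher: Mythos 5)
Your proof is correct, but it takes a genuinely different route from the paper's in two of the three steps. The section identification via the projection formula is identical. For the identification $\I_{Z/X}\cong\pi^*\I_{Z_B/B}$ (hence $Z=\pi^{-1}(Z_B)$ and $\O_Z\cong\pi^*\O_{Z_B}$), the paper pulls back the Eagon--Northcott resolution of $\I_{Z_B/B}(\det\E_B)$ and compares it with the one on $X$; this forces the paper to establish \emph{first} that $Z_B$ is nonempty (via $c_k(\E)=\pi^*c_k(\E_B)$ and Lemma \ref{gg}) and of pure codimension $k$ (via Lemma \ref{red} on the smooth $B$), since the Eagon--Northcott complex is only a resolution in the expected codimension. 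You instead use the determinantal description of the ideal from Remark \ref{degeneracy} together with the fact that $\varphi=\pi^*\varphi_B$, which identifies the ideals with no codimension hypothesis at all and makes $Z_B\ne\emptyset$ immediate; note that for this step you do not even need right exactness of $\pi^*$ in the way you invoke it, since $\Img(\pi^*f)=\Img(\pi^*\Img(f)\to\O_X)$ holds for any morphism, which is exactly the ideal of the scheme-theoretic preimage. Finally, for the purity of $Z_B$ the paper simply applies Lemma \ref{red} on $B$ (having checked $B$ smooth and $V_B$ general), whereas you transfer the purity of $Z$ down to $B$ by a fibre-dimension count using flatness and the equidimensionality of the fibres of $\pi$; this is correct, though two phrases are slightly stronger than what you need or can immediately assert: a component $Z_j$ through your chosen point need only satisfy $\overline{\pi(Z_j)}\subseteq Z_{B,i}$ (not dominate it) for the inequality $\dim Z_j\le\dim Z_{B,i}+e$ to go through, and the generic fibre of a single component of $\pi^{-1}(Z_{B,i})$ is not a priori $e$-dimensional, although the local dimension formula for flat morphisms does give $\dim\pi^{-1}(Z_{B,i})=\dim Z_{B,i}+e$ as you claim. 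Your ordering of the steps is arguably cleaner; the paper's buys the codimension statement for free from Lemma \ref{red} at the cost of needing it before the ideal-sheaf identification.
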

\begin{proof}
$\pi$ is surjective being both open and closed. Note that $B$ is smooth and $\E_B$ is globally generated since $X$ and $\E=\pi^*\E_B$ respectively are (see for example \cite[Rmk.~4.3.25 and Exc.~5.1.29(b)]{liu}). Then, by projection formula, we obtain  
$$H^0(\E)\cong H^0(\E_B \otimes \pi_\ast\O_X)\cong H^0(\E_B).$$ Moreover, as the pull-back of sections $\pi^\ast\colon H^0(\E_B)\to H^0(\E)$ is injective by \cite[Cor.~2.2.8]{g1}, the above isomorphism is induced by $\pi^*$. We set $V_B \subseteq H^0(\E_B)$ to be the subspace such that $V=\pi^* V_B$. Then $V_B \subseteq H^0(\E_B)$ is general and we get that $Z_B \ne \emptyset$: In fact, if not, we would have that $c_k(\E_B)=0$ by Lemma \ref{gg}, and therefore also $c_k(\E)=\pi^*c_k(\E_B)=0$, giving, by Lemma \ref{gg} again, the contradiction $Z = \emptyset$. Therefore $Z_B$ has pure codimension $k$ in $B$ by Lemma \ref{red}. It follows that we have the Eagon-Northcott resolution
$$0 \to S^{k-1}V_B \otimes \O_B \to \cdots \to V_B \otimes \Lambda^{k-2} \E_B \to \Lambda^{k-1}\E_B \to \I_{Z_B/B}(\det \E_B) \to 0$$
whose pull-back via $\pi$ is, since $V \cong V_B$ via pull-back of sections, the exact sequence
$$0 \to S^{k-1}V \otimes \O_X \to \cdots \to V \otimes \Lambda^{k-2} \E \to \Lambda^{k-1} \E \to \pi^\ast(\I_{Z_B/B}(\det \E_B)) \to 0.$$
On the other hand, we have the analogous resolution
$$0 \to S^{k-1}V \otimes \O_X \to \cdots \to V \otimes \Lambda^{k-2} \E \to \Lambda^{k-1} \E \to  \I_{Z/X}(D) \to 0$$
where $D = \det \E=\pi^*(\det \E_B)$. We get that $\I_{Z/X}(D) \cong \pi^*(\I_{Z_B/B}(\det \E_B))$ and therefore 
\begin{equation}
\label{idp}
\I_{Z/X} \cong \pi^*\I_{Z_B/B}.
\end{equation} 
Now, pulling back the exact sequence
$$0 \to \I_{Z_B/B} \to \O_B \to \O_{Z_B} \to 0$$
we get an exact sequence
$$0 \to \pi^*\I_{Z_B/B} \to \O_X \to \pi^*\O_{Z_B} \to 0$$
and \eqref{idp} shows that $\O_Z \cong \pi^*\O_{Z_B}$. Since  $\pi$ is flat, we also have (see for example \cite[Lemma 26.4.7, \href{https://stacks.math.columbia.edu/tag/01HQ}{Tag 01HQ}]{st}) that $Z$ is the scheme-theoretic inverse image of $Z_B$.
\end{proof}

\section{Generalities on Ulrich vector bundles}

We will often use the following well-known properties of Ulrich bundles

\begin{lemma}
\label{ulr}
Let $\E$ be a rank $r$ Ulrich bundle on $X \subset \P^N$. We have:
\begin{itemize}
\item[(i)] $\E$ is globally generated. 
\item [(ii)] $\E_{|X_{n-1}}$ is Ulrich on a smooth hyperplane section $X_{n-1}$ of $X$.
\item[(iii)] $\det \E$ is globally generated and it is not trivial, unless $(X,\O_X(1), \E) = (\P^n, \O_{\P^n}(1), \O_{\P^n}^{\oplus r})$.
\item[(iv)] $H^0(\E^*)=0$, unless $(X,\O_X(1),\E) = (\P^n, \O_{\P^n}(1), \O_{\P^n}^{\oplus r})$.
\item [(v)] $\O_X(l)$ is Ulrich if and only if $(X,\O_X(1),l)=(\P^n,\O_{\P^n}(1),0)$.
\item [(vi)] $\E$ is aCM.
\item [(vii)] $h^0(\E)=rd$.
\item [(viii)] If $(X,\O_X(1))=(\P^n, \O_{\P^n}(1))$, then $\E=\O_{\P^n}^{\oplus r}$.
\item [(ix)] $\E^*(K_X+(n+1)H)$ is Ulrich.
\item [(x)] $c_1(\E) H^{n-1}=\frac{r}{2}[K_X+(n+1)H] H^{n-1}$.
\end{itemize} 
\end{lemma}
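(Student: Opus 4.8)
The plan is to reduce everything to Castelnuovo--Mumford regularity together with the shape of the Hilbert polynomial imposed by the Ulrich vanishings, and then to read off the classification statements from the rank one case on $\P^n$. First I would note that taking $p=i$ in the defining vanishings $H^i(\E(-p))=0$, $1\le p\le n$, gives $H^i(\E(-i))=0$ for $1\le i\le n$, while $H^i(\E(-i))=0$ for $i>n$ holds trivially; hence $\E$ is $0$-regular. Castelnuovo--Mumford then yields both global generation, that is (i), and the vanishings $H^i(\E(t))=0$ for $i\ge 1$ and $t\ge -i$, in particular $H^i(\E)=0$ for $i>0$. Since $\chi(\E(-p))=0$ for $1\le p\le n$, the Hilbert polynomial $P(t)=\chi(\E(t))$ is the degree $n$ polynomial with roots $-1,\dots,-n$ and leading coefficient $rd/n!$, so $P(t)=rd\binom{t+n}{n}$. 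Evaluating at $t=0$ and using $H^i(\E)=0$ for $i>0$ gives $h^0(\E)=\chi(\E)=rd$, which is (vii); comparing the coefficient of $t^{n-1}$ in $P(t)$ with its Riemann--Roch value $\frac{1}{(n-1)!}(c_1(\E)-\frac{r}{2}K_X)H^{n-1}$ and rearranging gives (x).

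For the cohomological items I would argue as follows. Statement (ix) is Serre duality: for $1\le p\le n$, writing $q=n+1-p\in\{1,\dots,n\}$, one has $H^i(\E^*(K_X+(n+1-p)H))\cong H^{n-i}(\E(-q))^*$, and the right-hand side vanishes by the Ulrich hypothesis, so $\E^*(K_X+(n+1)H)$ is Ulrich. Statement (ii) comes from the restriction sequence $0\to\E(-p-1)\to\E(-p)\to\E_{|X_{n-1}}(-p)\to 0$: for $1\le p\le n-1$ both $H^i(\E(-p))$ and $H^{i+1}(\E(-p-1))$ vanish, forcing $H^i(\E_{|X_{n-1}}(-p))=0$. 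For (vi) I would combine the $0$-regularity of $\E$ with that of its Ulrich dual from (ix): for $0<i<n$ the vanishing $H^i(\E(t))=0$ holds for $t\ge -i$ by regularity of $\E$, while for $t\le -i-1$ Serre duality rewrites $H^i(\E(t))$ as middle cohomology of a nonnegative twist of the $0$-regular bundle $\E^*(K_X+(n+1)H)$, which vanishes; hence all intermediate cohomology vanishes and $\E$ is aCM.

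The classification statements I would then assemble in the order (v), (viii), (iii), (iv). For (v): $\O_X(l)$ globally generated forces $l\ge 0$, while $H^0(\O_X(l-1))=0$ (the $p=1$ Ulrich vanishing) forbids $l\ge 1$, since then $(l-1)H$ would be effective with a nonzero section; thus $l=0$, and now $h^0(\O_X)=1$ together with $h^0(\O_X)=d$ from (vii) gives $d=1$, so $X=\P^n$. For (viii): on $\P^n$ the bundle $\E$ is aCM by (vi), so Horrocks' splitting criterion gives $\E\cong\bigoplus_i\O_{\P^n}(a_i)$; each summand is again Ulrich, hence $a_i=0$ by (v), and $\E\cong\O_{\P^n}^{\oplus r}$. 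For (iii): $\det\E$ is globally generated because $\E$ is, by (i); and if $\det\E$ were trivial, then a globally generated bundle with trivial determinant is itself trivial (the classifying map to the Grassmannian pulls the ample Pl\"ucker bundle back to $\O_X$, hence is constant), so $\E\cong\O_X^{\oplus r}$, whence $\O_X$ is a summand and Ulrich, forcing $X=\P^n$. For (iv): if $h^0(\E^*)\ge 1$, Lemma \ref{c3no}(iii) splits off a trivial summand $\O_X$, which is Ulrich as a summand, so $X=\P^n$ and $\E\cong\O_{\P^n}^{\oplus r}$ by (viii).

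The only genuinely non-formal ingredient is (viii), the triviality of Ulrich bundles on projective space, which rests on Horrocks' criterion (equivalently, freeness of maximal Cohen--Macaulay modules with linear resolution over the polynomial ring); this is where I expect the real content to sit, the remaining items being regularity bookkeeping, Serre duality and the already-established Lemma \ref{c3no}(iii). Finally I would verify that the dependency graph is acyclic: (i) and the Hilbert polynomial feed (vii) and (x); (ix) is independent via duality; (vi) uses (i) and (ix); (v) uses (vii); (viii) uses (vi) and (v); and (iii), (iv) use (v) and (viii).
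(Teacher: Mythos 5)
Your proof is correct. The paper establishes this lemma entirely by citation (to \cite[Lemma 3.1]{lr1}, \cite[Prop.~2.1]{es}, \cite[(3.1)]{be} and Serre duality), and the arguments you supply --- $0$-regularity of $\E$ for (i) and the auxiliary vanishings, the Hilbert polynomial $rd\binom{t+n}{n}$ for (vii) and (x), Serre duality for (ix) and, combined with $0$-regularity of the dual Ulrich bundle, for (vi), Horrocks together with (v) for (viii), and splitting off trivial summands for (iii) and (iv) --- are precisely the standard ones those references use.
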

\begin{proof}
For (i)-(vi) and (x), see for example \cite[Lemma 3.1]{lr1}. For (vii) see \cite[Prop.~2.1]{es} (or \cite[Thm.~2.3]{be}) and for (viii) see \cite[(3.1)]{be}. (ix) follows by definition of Ulrich and Serre's duality.
\end{proof}

We also recall the following examples of Ulrich bundles.

\begin{defi}
\label{spin}
For $n \ge 2$, we let $Q_n \subset \P^{n+1}$ be a smooth quadric. We let $S$ ($n$ odd), and $S', S''$ ($n$ even), be the vector bundles on $Q_n$, as defined in \cite[Def.~1.3]{o}. The {\it spinor bundles} on $Q_n$ are ${\mathcal S}={\mathcal S}_n = S(1)$ if $n$ is odd and ${\mathcal S}'={\mathcal S}'_n = S'(1)$, ${\mathcal S}''={\mathcal S}''_n = S''(1)$, if $n$ is even. 
\end{defi}

\begin{defi}
\label{not4}
Let $\E$ be a vector bundle on $X \subset \P^N$. We say that $(X,\O_X(1),\E)$ is a {\it linear Ulrich triple} if there are a smooth irreducible variety $B$ of dimension $b \ge 1$, a very ample vector bundle $\F$ and a vector bundle $\G$ on $B$ such that $(X,\O_X(1),\E)=(\P(\F), \O_{\P(\F)}(1), \pi^*(\G(\det \F)))$, where $\pi: X \cong \P(\F) \to B$ is the bundle map and $H^j(\G \otimes S^k \F^*)=0 \ \hbox{for all} \ j \ge 0, 0 \le k \le b-1$.
\end{defi}
When $(X,\O_X(1),\E)$ is a linear Ulrich triple, then $\E$ is an Ulrich bundle on $X$ by \cite[Lemma 4.1]{lo}. 

Next, we recall some definitions and facts in \cite{lr1}.

\begin{defi}
\label{usv}
Let $n \ge 2$ and $d \ge 2$. Let $\E$ be a rank $r \ge 2$ Ulrich bundle on $X \subset \P^N$. Let $V \subset H^0(\E)$ be a subspace of dimension $r-1$ such that, if $\varphi : V \otimes \O_X \to \E$ is the associated morphism and $Z=D_{r-2}(\varphi)$, then $Z$ satisfies the following conditions (in particular these hold, by Lemma \ref{red}, if $V$ is a general subspace of $H^0(\E)$):
\begin{itemize}
\item[(a)] $Z$ is either empty or of pure codimension $2$,
\item[(b)] if $Z \ne \emptyset$ and either $r=2$ or $n \le 5$, then $Z$ is smooth (possibly disconnected),
\item[(c)] if $Z \ne \emptyset$ and $n \ge 6$, then $Z$ is either smooth or is normal, Cohen-Macaulay, reduced and with $\dim \Sing(Z) = n-6$.
\end{itemize}
We call $Z$ {\it an Ulrich subvariety associated to $\E$}. We say that $Z$ is {\it a general Ulrich subvariety associated to $\E$} if $V$ is a general subspace of $H^0(\E)$.
\end{defi}

\begin{remark}
\label{usv2}
Let $Z \subset X$ be any subvariety satisfying (a)-(c) above and (i)-(vi) of \cite[Thm.~1]{lr1}. It follows from \cite[Thm.~1]{lr1} that there is an Ulrich bundle $\E$ such that $Z$ is associated to $\E$.
\end{remark}

We now prove Theorem \ref{c2=0}.

\renewcommand{\proofname}{Proof of Theorem \ref{c2=0}}
\begin{proof}
Ulrich subvarieties exist by \cite[Thm.~1]{lr1}, therefore (i) implies (ii). If (ii) holds, there is an empty Ulrich subvariety associated to $\E$, hence $c_2(\E)=0$ by Lemma \ref{gg}, that is (iii). Next, assume (iii). Let $S=X_2, \E'=\E_{|S}, D' \in |\det \E'|$. We have that $\E'$ is a rank $r$ Ulrich bundle on $S$ by Lemma \ref{ulr}(ii) and $\E'$ is globally generated by Lemma \ref{ulr}(i). Also, $c_2(\E')=c_2(\E)_{|S}=c_2(\E) \cdot H^{n-2}=0$. Choosing $r-1$ general sections in $H^0(\E')$ we get a general morphism $\varphi: \O_S^{\oplus (r-1)} \to \E'$ and $D_{r-2}(\varphi)=\emptyset$ by Lemma \ref{gg} with $k=2$. It follows that we have an exact sequence
\begin{equation}
\label{es}
0 \to \O_S^{\oplus (r-1)} \to \E' \to \O_S(D') \to 0.
\end{equation}
Now 
\begin{equation}
\label{ext}
\Ext^1(\O_S(D')),\O_S^{\oplus (r-1)}) \cong H^1(\O_S(-D'))^{\oplus (r-1)}.
\end{equation}
By Lemma \ref{ulr}(iii) we have that $\O_S(D')$ is globally generated, hence nef. If $(D')^2 > 0$, then $D'$ is big and Kawamata-Viehweg's vanishing theorem shows that $H^1(\O_S(-D')))=0$. Hence \eqref{ext} implies that \eqref{es} splits, $\E' \cong \O_S^{\oplus (r-1)} \oplus \O_S(D')$ and therefore $\O_S$ is Ulrich. It follows from Lemma \ref{ulr}(v) that $(S,\O_S(1)) \cong (\P^2, \O_{\P^2}(1))$, hence $d=1$, a contradiction. Therefore $(D')^2=0$. Then $c_1(\E)^2 \cdot H^{n-2}=(D')^2=0$, so that $c_1(\E)^2=0$ by Lemma \ref{gg}(i). Note that $c_1(\E) \ne 0$, for otherwise $d=1$ by \cite[Lemma 2.1]{lo}. Hence $\nu(\det(\E))=1$ and, if $\Phi : X \to {\mathbb G}(r-1, \P H^0(\E))$ and $F_x = \Phi^{-1}(\Phi(x))$, then $\dim F_x = n-1$ for every $x \in X$ by \cite[Thm.~2]{ls}. Therefore $(X,\O_X(1),\E)$ is as in (iv) by \cite[Lemmas 2.10 and 2.12]{lms}.

Finally, if $(X,\O_X(1),\E)$ is as in (iv), then $c_2(\E)=0$ and therefore, if $Z$ is any Ulrich subvariety associated to $\E$, we have that $Z=D_{r-2}(\varphi)$ for some morphism $\varphi : V \otimes \O_X \to \E$ by Remark \ref{usv}. Hence $Z=\emptyset$ by Lemma \ref{gg}. Thus (iv) implies (i) and we are done.
\end{proof}
\renewcommand{\proofname}{Proof}

We observe the following simple consequence of Theorem \ref{c2=0}.

\begin{remark}
\label{usv3}
Let $X \subset \P^N$ be a smooth irreducible variety of dimension $n \ge 2$, degree $d \ge 2$ and let $\E$ be a rank $r \ge 2$ Ulrich bundle on $X$. If $c_1(\E)^2 \ne 0$ (hence, in particular, if $\E$ is big), then all Ulrich subvarieties associated to $\E$ are nonempty.

Indeed, note that if $\E$ is big, then $c_1(\E)^n > 0$ by \cite[Rmk.~2.2]{lm}), hence also $c_1(\E)^2 \ne 0$. Now, if there is an  empty Ulrich subvariety associated to $\E$, then Theorem \ref{c2=0} implies that $(X,\O_X(1),\E)$ is a linear Ulrich triple over a curve, hence $c_1(\E)^2=0$, a contradiction.
\end{remark}

\section{Connectedness of degeneracy loci and of Ulrich subvarieties}
\label{sette}

We study in this section the connectedness of some degeneracy loci associated to a given globally generated bundle. As a particular case, we will get connectedness of Ulrich subvarieties. 

Our first observation is that they all have the same number of connected components.

\begin{prop}
\label{conn}
Let $\E$ be a rank $r$ globally generated bundle on $X$, let $k \ge 1$ and assume that $c_k(\E) \ne 0$. Consider the set of morphisms $\varphi : \O_X^{\oplus (r+1-k)} \to \E$ such that the degeneracy locus $D_{r-k}(\varphi)$ is reduced of pure codimension $k$. Then the above set is open and all such degeneracy loci $D_{r-k}(\varphi)$ have the same number of connected (or irreducible) components. In particular, if $\E$ is Ulrich with $c_2(\E) \ne 0$, then all Ulrich subvarieties associated to $\E$ have the same number of connected components.
\end{prop}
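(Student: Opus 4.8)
The plan is to exhibit the set of ``good'' morphisms as an open subset of the parameter space $\Hom(\O_X^{\oplus(r+1-k)}, \E) = H^0(\E)^{\oplus(r+1-k)}$ and then to show that over this open set the degeneracy loci form a flat, proper family whose fibers all have the same number of connected components. First I would set $W = H^0(\E)^{\oplus(r+1-k)}$ and consider the universal morphism $\Phi : \O_{X \times W}^{\oplus(r+1-k)} \to \E \boxtimes \O_W$ (i.e.\ $\Phi$ is $\varphi$ on the fiber over $\varphi \in W$), with its total degeneracy locus $\mathcal{D} = D_{r-k}(\Phi) \subset X \times W$ and second projection $p : \mathcal{D} \to W$. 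By Remark \ref{degeneracy}, $\mathcal{D}$ is cut out locally by the $(r-k+1)\times(r-k+1)$ minors of the matrix of $\Phi$, so $\mathcal{D}$ is a closed subscheme and $p$ is proper.

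The first key step is openness. I would let $W_0 \subset W$ be the set of $\varphi$ for which $D_{r-k}(\varphi)$ is reduced of pure codimension $k$; Lemma \ref{gg} guarantees $W_0 \ne \emptyset$ (a general $\varphi$ works, since $c_k(\E)\neq 0$). Pure codimension $k$ means the fiber $p^{-1}(\varphi)$ has dimension exactly $n-k$; since $\mathcal{D}$ is cut out by minors, each irreducible component of $\mathcal{D}$ dominating $W$ has relative dimension $\ge n-k$ (the minors impose at most $k$ ``expected'' conditions, by the codimension bound for determinantal loci), so the locus where the fiber dimension is $\le n-k$, hence $=n-k$, is open by upper semicontinuity of fiber dimension. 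Reducedness is also an open condition on the base for a proper flat family (generic reducedness spreads out), so I would intersect to obtain that $W_0$ is open.

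The second key step is constancy of the number of connected components. Over $W_0$ the family $p$ has constant fiber dimension $n-k$, and each fiber is Cohen--Macaulay and reduced (by Lemma \ref{red}, applied fiberwise to the general-type determinantal structure); combined with the fact that $\mathcal{D}$ restricted over $W_0$ is itself Cohen--Macaulay of the expected dimension, $p|_{W_0}$ is flat (a proper morphism to a smooth base with Cohen--Macaulay fibers of constant dimension and Cohen--Macaulay total space is flat, by miracle flatness). For a proper flat family, $h^0(\O_{p^{-1}(\varphi)})$ is upper semicontinuous and $\chi(\O_{p^{-1}(\varphi)})$ is locally constant; since the fibers are reduced, $h^0(\O_{p^{-1}(\varphi)})$ counts the connected components, so it suffices to argue that $h^0$ is also locally constant, equivalently that $H^1$ does not jump in a way that forces $h^0$ to jump. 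The cleanest route is to note $W_0$ is irreducible (it is open in the affine space $W$), so by semicontinuity $h^0$ is constant on a dense open subset; to upgrade this to all of $W_0$ I would invoke the principle that the number of connected components of the fibers of a proper flat morphism with geometrically reduced fibers over an irreducible base is locally constant, i.e.\ $\pi_0$ of the fibers is a local system (this follows from the fact that $p_*\O_{\mathcal{D}/W_0}$ is a locally free sheaf whose rank is $h^0$, flat base change, and the reducedness hypothesis pinning $h^0$ equal to the number of components).

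The irreducible-components count follows by the same argument applied after noting that, by Lemma \ref{red}, each $D_{r-k}(\varphi)$ in $W_0$ is normal, so its connected components coincide with its irreducible components; thus the two counts agree and are simultaneously constant. The final sentence is immediate: an Ulrich bundle $\E$ is globally generated with $c_2(\E)\ne 0$ by Lemma \ref{ulr}, the Ulrich subvarieties are exactly the $D_{r-2}(\varphi)$ for $\varphi$ in the relevant parameter space (Definition \ref{usv}), and these lie in $W_0$ by Lemma \ref{red}, so the general statement with $k=2$ applies verbatim.

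\medskip

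I expect the main obstacle to be the flatness/constancy step rather than openness. The delicate point is that reducedness and pure dimension alone do not a priori force local constancy of $h^0(\O_{\text{fiber}})$; one genuinely needs flatness of $p|_{W_0}$, and establishing flatness requires either miracle flatness (which demands that $\mathcal{D}$ over $W_0$ be Cohen--Macaulay of the expected dimension, not just each fiber) or a direct verification that the Eagon--Northcott complex resolving $\O_{\mathcal{D}}$ remains exact in the relative setting. I would prefer the Eagon--Northcott route, since the relative Eagon--Northcott complex (over $X \times W_0$) is a complex of locally free sheaves that is exact precisely when the degeneracy locus has the expected codimension fiberwise — which is exactly the defining condition of $W_0$ — and this simultaneously yields flatness of $\O_{\mathcal{D}}$ over $W_0$ and the locally free resolution needed to control $p_*\O_{\mathcal{D}}$.
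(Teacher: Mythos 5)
Your proposal is correct and follows essentially the same route as the paper's proof: a universal degeneracy locus over the affine space of morphisms, openness of the expected-codimension locus by semicontinuity of fiber dimension, flatness there via the Eagon--Northcott complex (the paper uses it to see that the fibers have constant Hilbert polynomial and then applies the flatness criterion over an integral base), openness of reducedness in the resulting proper flat family, and finally local constancy of the number of connected components of the fibers of a proper flat morphism with geometrically reduced fibers, for which the paper cites \cite[Tag 0E0N]{st}. The only point to adjust is the order of operations --- define the open locus first by the codimension condition alone and prove flatness over it \emph{before} invoking openness of reducedness, since the latter presupposes flatness --- after which your argument coincides with the paper's.
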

\begin{proof}
Let $W=\Hom_{\O_X}(\O_X^{\oplus (r+1-k)},\E)$, let $T=\Spec(\C[W^*])$ and consider $X \times T$ with projections $\pi_1 : X \times T \to X, \pi_2 : X \times T \to T$. A (closed) point $t \in T$ corresponds to a morphism $\varphi_t : \O_X^{\oplus (r+1-k)} \to \E$. As is well-known (see for instance the proof of \cite[Statement (folklore)(i), \S 4.1]{ba}), the degeneracy loci $D_{r-k}(\varphi_t)$, for $\varphi_t \in W$, arise as fibers of a morphism $\pi_{\mathcal Z} : {\mathcal Z} \to T$, where ${\mathcal Z} \subset X \times T$ is a certain closed subscheme of codimension $k$ and $\pi_{\mathcal Z}={\pi_2}_{| {\mathcal Z}}$. Since $X \to \Spec(\C)$ is proper, then so is $\pi_2$ by base extension. In particular $\pi_{\mathcal Z}$ is proper as well. We now observe that $\pi_{\mathcal Z}$ is surjective. Indeed, we know from Lemmas \ref{gg} and \ref{red}, that we can find an open subset $U' \subset T$ such that $Z_t:=D_{r-k}(\varphi_t)\subset X$, for $t \in U'$, is reduced of pure codimension $k$. This means that $U' \subset \pi_{\mathcal Z}({\mathcal Z})$. As $T$ is integral and $\pi_{\mathcal Z}$ is closed, we conclude that $\pi_{\mathcal Z}({\mathcal Z})=T$. Next, let $U \subset T$ be the (integral) open subscheme such that fibers $\pi_{\mathcal Z}^{-1}(t) \subset X \times \{t\} \cong X$ have the expected codimension, that is equivalent to say that $t \in U$ if and only if $D_{r-k}(\varphi_t)$ has codimension $k$. Then the base change $p : {\mathcal Z} \times_T U = \pi_{\mathcal Z}^{-1}(U) = {\mathcal Z}_U \to U$ is proper with geometric fibers being the degeneracy loci $Z_t \subset X$ of expected codimension $k$. 

We claim that $p$ is also flat. To see this, let $\O_X(1)$ be a very ample line bundle on $X$. First observe that, since $L=\pi_1^*\O_X(1)$ is $T$-ample by \cite[Prop.~13.64]{gw1}, the restriction $L_{|{\mathcal Z}}$ is still $T$-ample by \cite[Rmk.~13.61(2)]{gw1}. Therefore, again by \cite[Prop.~13.64]{gw1}, the base change $L_U=(L_{|{\mathcal Z}})_{|\pi_{\mathcal Z}^{-1}(U)}$ of $L_{|{\mathcal Z}}$ is $U$-ample. On the other hand, $(L_U)_{|p^{-1}(t)}=\O_{Z_t}(1)$ for all $t \in U$ and all $p^{-1}(t)=Z_t \subset X \subset\P^N$ have the same Hilbert polynomial with respect to $\O_X(1)$: In fact, their structure sheaf, fits, by the Eagon-Northcott complex, into an exact sequence of the form 
\begin{multline*}
0 \to \O_{X}(-D)^{\oplus\binom{r-1}{r-k}} \to \E(-D)^{\oplus\binom{r-2}{r-k}} \to \Lambda^2\E(-D)^{\oplus\binom{r-3}{r-k}}\to\dots\to \Lambda^{k-1}\E(-D)\to\O_X\to \O_{Z_t} \to 0
\end{multline*}
where $\det \E = \O_X(D)$, 
hence
$\chi(\O_{Z_t}(m))$
is independent of $t$. Then the claim follows from \cite[Thm.~23.155]{gw2}. Since $p :  {\mathcal Z}_U \to U$ is proper and flat, the set of points $V \subset U$ such that $p^{-1}(t)=Z_t$ is reduced for $t \in V$ is open (see \cite[Thm.~12.2.1]{g} or \cite[(E.1)(11)]{gw1}). This proves the first part of the claim. Taking another base change, we get a proper flat morphism $q : {\mathcal Z}_U \times_U V = p^{-1}(V) \to V$ over an integral scheme such that every geometric fiber $q^{-1}(t) = D_{r-k}(\varphi_t) \subset X$ is reduced of codimension $k$. Then \cite[Lemma 37.53.8, \href{https://stacks.math.columbia.edu/tag/0E0N}{Tag 0E0N}]{st} tells that the number of connected components of fibers is constant. Since the degeneracy loci $D_{r-k}(\varphi)$ under consideration are reduced and have codimension $k$, they arise as fibers over some $t \in V$ and therefore they have the same number of connected components. In particular this holds for Ulrich subvarieties by their definition and Lemmas \ref{ulr}(i) and \ref{gg}.
\end{proof}

We now proceed with the proof of Theorem \ref{connesse}. We first study the case $c_{k+1}(\E)=0$, in which we can actually say a bit more.

\begin{lemma}
\label{connesse1}
Let $\E$ be a rank $r$ globally generated bundle on $X$ with $c_k(\E) \ne 0, c_{k+1}(\E)=0$ for some integer $k$ and let $s=h^0(\E^*)$. Consider morphisms $\varphi : \O_X^{\oplus (r+1-k)} \to \E$ such that the degeneracy loci $D_{r-k}(\varphi)$ are reduced of pure codimension $k$. If $k \in \{1, 2\},$ or if $k\ge3$ and $H^t((\Lambda^{k-1-t}\E)(-\det \E))=0$ for $1\le t \le k-2$, then all degeneracy loci $D_{r-k}(\varphi)$ as above have at least $r+1-k-s$ connected components.
\end{lemma}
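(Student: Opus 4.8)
The plan is to identify the number of connected components of $Z:=D_{r-k}(\varphi)$ with $h^0(\O_Z)$ and to bound the latter from below using the Eagon--Northcott resolution together with the stated vanishings.

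First I would reduce to the case $s=0$. By Lemma \ref{c3no}(iii) we may write $\E\cong\O_X^{\oplus s}\oplus\E_1$ with $\E_1$ globally generated of rank $r-s$, $h^0(\E_1^*)=0$ and $c_k(\E_1)=c_k(\E)\ne0$, $c_{k+1}(\E_1)=0$. For a general $\varphi$ the induced constant matrix $\O_X^{\oplus(r+1-k)}\to\O_X^{\oplus s}$ has maximal rank $s$ (recall $r+1-k\ge s+1$ by Lemma \ref{c3no}(iii)), so after a linear change of the source one checks that $D_{r-k}(\varphi)=D_{(r-s)-k}(\psi)$ for a general $\psi:\O_X^{\oplus(r-s+1-k)}\to\E_1$, and the target number $r+1-k-s$ is unchanged. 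By Proposition \ref{conn} the count is the same for all admissible $\varphi$, so we may take $\varphi$ general and, replacing $(\E,s)$ by $(\E_1,0)$, assume $s=0$; the goal becomes $h^0(\O_Z)\ge r+1-k$. Since $Z$ is reduced and projective over $\C$, each connected component is reduced and connected, so $H^0$ of each is $\C$ and the number of components equals $h^0(\O_Z)$.

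Next, from $0\to\I_{Z/X}\to\O_X\to\O_Z\to0$ and $H^0(\I_{Z/X})=0$ (as $Z$ has codimension $k\ge1$ on the connected $X$) we get $h^0(\O_Z)=1+\dim\ker[H^1(\I_{Z/X})\to H^1(\O_X)]$. I would then resolve $\I_{Z/X}$ by the Eagon--Northcott complex written in the proof of Proposition \ref{conn}, whose terms are $(\Lambda^{k-1}\E)(-D)$ and twists $(\Lambda^{k-i}\E)(-D)^{\oplus m_i}$ with $\O_X(D)=\det\E$, ending in $\O_X(-D)^{\oplus m_k}$. Running the hypercohomology spectral sequence, the hypotheses $H^t((\Lambda^{k-1-t}\E)(-D))=0$ for $1\le t\le k-2$ (vacuous when $k\le2$), together with $H^j(\O_X(-D))=0$ for $0\le j\le k-1$ --- which holds by Lemma \ref{c3no}(ii) because $c_k(\E)\ne0$ forces $c_1(\E)^k\ne0$ --- are exactly what annihilate every contribution to $H^1(\I_{Z/X})$ on the relevant antidiagonal except the edge term $H^1((\Lambda^{k-1}\E)(-D))$. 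This should give an injection $H^1((\Lambda^{k-1}\E)(-D))\hookrightarrow H^1(\I_{Z/X})$ landing in $\ker[H^1(\I_{Z/X})\to H^1(\O_X)]$, hence $h^0(\O_Z)\ge 1+h^1((\Lambda^{k-1}\E)(-D))$. For $k\le2$ this collapses to the single sequence $0\to\O_X(-D)^{\oplus(r-1)}\to\E(-D)\to\I_{Z/X}\to0$ (the twist of the sequence of Lemma \ref{zeta1}), respectively $\I_{Z/X}=\O_X(-D)$ when $k=1$, so no extra vanishing is needed.

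It then remains to prove the numerical bound $h^1((\Lambda^{k-1}\E)(-D))\ge r-k$, which is the heart of the matter and the step I expect to be the main obstacle. Conceptually this reflects the non-degeneracy of the \emph{kernel map} $\kappa\colon Z\dashrightarrow \P(V^\ast)=\P^{r-k}$, $V=\C^{r+1-k}$, sending a point where $\varphi$ has rank exactly $r-k$ to its line of relations: the hypothesis $h^0(\E^*)=0$ forbids a global relation among the $r+1-k$ sections, so the image of $\kappa$ is linearly non-degenerate and spans $\P^{r-k}$, while $c_{k+1}(\E)=0$ is what forces $\kappa$ to contract each connected component of $Z$ to a point; a non-degenerate set of points in $\P^{r-k}$ has at least $r+1-k$ elements. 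I would make this rigorous through the cohomological count above, using $0\to V\otimes\O_X\to\E\to\G\to0$ (with $\G$ the torsion-free cokernel) to identify $H^1((\Lambda^{k-1}\E)(-D))$ with a space of dimension at least $r-k$ built from the $r+1-k$ sections, the missing $1$ accounting for the global section of $\O_X$ and, before the reduction, the $s$ accounting for the trivial summand $\O_X^{\oplus s}$ (whose relations span an $s$-dimensional subspace disjoint from the image of $\kappa$). The delicate points are the survival of the edge term inside $\ker[H^1(\I_{Z/X})\to H^1(\O_X)]$ when $H^1(\O_X)\ne0$ (relevant for $k\le2$, where $H^1(\O_X)$ is not assumed to vanish) and, for $k\ge3$, checking that the stated vanishings suffice to collapse the spectral sequence precisely onto $H^1((\Lambda^{k-1}\E)(-D))$.
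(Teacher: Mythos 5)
Your framework (counting components via $h^0(\O_Z)=1+\dim\ker[H^1(\I_{Z/X})\to H^1(\O_X)]$, then feeding in the Eagon--Northcott resolution) is a reasonable starting point, and your reduction to $s=0$ is fine, but the two points you yourself flag as ``delicate'' are not loose ends --- they are the entire content of the lemma, and neither is established. First, the bound $h^1((\Lambda^{k-1}\E)(-D))\ge r-k$ is never proved: the kernel-map heuristic ($\kappa:Z\dashrightarrow\P^{r-k}$ non-degenerate, contracted on components) is suggestive but you give no mechanism converting it into a lower bound on an $H^1$, and it is telling that the paper's proof never needs any lower bound on a cohomology group of a syzygy term. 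Second, even granting that bound, your chain requires the image of $H^1((\Lambda^{k-1}\E)(-D))$ in $H^1(\I_{Z/X})$ to lie in $\ker[H^1(\I_{Z/X})\to H^1(\O_X)]$, i.e.\ that the composite sheaf map $(\Lambda^{k-1}\E)(-D)\twoheadrightarrow\I_{Z/X}\hookrightarrow\O_X$ induces zero on $H^1$. That composite is a nonzero global section of $\Lambda^{k-1}\E^*(D)$ and there is no a priori reason its cup-product action on $H^1$ vanishes; since $H^1(\O_X)\ne0$ is allowed (the lemma assumes nothing on $H^1(\O_X)$ for $k\le2$, and Example \ref{ese} lives on varieties with $h^1(\O_X)=g>0$), this is a genuine obstruction, not a technicality.

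For comparison, the paper inverts the direction of the estimate so that only an \emph{upper} bound on an $h^0$ is ever needed. Choosing $V_1\subset V_0$ of codimension one, the vanishing $c_{k+1}(\E)=0$ makes $\F=\Coker(V_1\otimes\O_X\to\E)$ a rank $k$ bundle and exhibits $W=D_{r-k}(\varphi_0)$ as the zero scheme of the induced section of $\F$; dualizing then produces a four-term sequence $0\to\H^*\to\E^*\to V_0^*\otimes\O_X\to\O_W\to0$, so that $h^0(\O_W)\ge(r+1-k)-h^0(\G)$ with $\G$ the middle image. The single vanishing $H^1(\H^*)=0$ gives $h^0(\G)\le h^0(\E^*)=s$ and the conclusion follows at once; that vanishing is trivial for $k=1$, follows from $\H^*\cong\O_X(-D)$ and Lemma \ref{c3no}(ii) for $k=2$, and is exactly what the extra hypotheses for $k\ge3$ deliver via the Koszul complex of the section of $\F$. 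If you want to salvage your approach, you would essentially have to reconstruct this surjection $V_0^*\otimes\O_X\to\O_W$; as written, the proposal identifies the right objects but leaves the decisive inequality unproven.
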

\begin{proof}
Note that $k \le \min\{r,n\}$ since $c_k(\E) \ne 0$. Let $V_0 \subset H^0(\E)$ be a general subspace of dimension $r+1-k$ and let $\varphi_0 : V_0 \otimes \O_X \to \E$. If $r=k$, we set $\F=\E$. If $r \ge k+1$, let $V_1 \subset V_0$ be a general subspace of dimension $r-k$. Consider $\varphi_1 : V_1 \otimes \O_X  \to \E$, set $\F = \Coker \varphi_1, \H = \Coker \varphi_0$, so that we have an exact sequence
\begin{equation}
\label{e1}
0 \to V_0 \otimes \O_X \to \E \to \H \to 0
\end{equation}
and a commutative diagram
$$\xymatrix{& 0 \ar[d] & 0 \ar[d] & & \\ 0 \ar[r] & V_1 \otimes \O_X \ar[d] \ar[r]^{\hskip .4cm \varphi_1} & \E \ar[d] \ar[r] & \F  \ar[r] & 0 & \\ 0 \ar[r] & V_0 \otimes \O_X \ar[r]^{\hskip .4cm \varphi_0} \ar[d] & \E \ar[r] \ar[d] & \H \ar[r] & 0 &  \\ & \O_X \ar[r] \ar[d] & 0 \ar[d] & & \\ & 0 & 0 & & }$$
Since $c_{k+1}(\E)=0$, we deduce by Lemma \ref{gg} when $k \le n-1$ and by Lemma \ref{red} when $k=n$, that $D_{r-k-1}(\varphi_1)=\emptyset$, hence $\F$ is a rank $k$ vector bundle on $X$ and the snake lemma applied to the above diagram gives an exact sequence
\begin{equation}
\label{e2}
0 \to \O_X \to \F \to \H \to 0.
\end{equation}
Note that the above holds also when $r=k$. Let $W=D_{r-k}(\varphi_0)$, so that $W$ is reduced of pure codimension $k$ by Lemmas \ref{gg} and \ref{red}. When $r=k$, we have that $V_0 = \langle \sigma \rangle$ and $W=Z(\sigma)$. When $r \ge k+1$, let $\{\sigma_1, \ldots, \sigma_{r-k}\}$ be a basis of $V_1$ and let $\{\sigma, \sigma_1, \ldots, \sigma_{r-k}\}$ be a basis of $V_0$. We claim that 
\begin{equation}
\label{e3}
W=Z(\alpha(\sigma))
\end{equation}
where $\alpha=H^0(\psi) : H^0(\E) \to H^0(\F)$ is the map induced by the exact sequence
\begin{equation}
\label{e4}
0 \to V_1 \otimes \O_X \mapright{\varphi_1}  \E \mapright{\psi} \F \to 0.
\end{equation}
In fact, in a given point $x \in X$, we have the commutative diagram of $k(x)=\O_{X,x}/\mathfrak{m}_x$-vector spaces
$$\xymatrix{& 0 \ar[d] & 0 \ar[d] & & \\ 0 \ar[r] & V_1 \otimes \O_X(x) \ar[d] \ar[r]^{\hskip .4cm \varphi_1(x)} & \E(x) \ar[d] \ar[r]^{\hskip -.3cm \psi(x)}  & \F(x)  \ar[r] & 0 & \\ & V_0 \otimes \O_X(x) \ar[r]^{\hskip .4cm \varphi_0(x)} \ar[d] & \E(x) \ar[d] & & &  \\ & \O_X(x) \ar[r] \ar[d] & 0 & & \\ & 0 & & & }$$
where the first row is exact since $D_{r-k-1}(\varphi_1)=\emptyset$. Note that $\psi(x)(\sigma(x))=(\alpha(\sigma))(x)$. Now $x \in W$ if and only if $\rk \varphi_0(x) \le r-k$, if and only if $\Im \varphi_0(x)=\Im \varphi_1(x)$, if and only if $\sigma(x) \in \Ker \psi(x)$, if and only if $(\alpha(\sigma))(x)=0$, if and only if $x \in Z(\alpha(\sigma))$. Thus, \eqref{e3} is proved. Dualizing \eqref{e2}, we get the exact sequence
\begin{equation}
\label{en}
0 \to \H^* \to \F^* \to \O_X \to \SExt^1_{\O_X}(\H,\O_X) \to 0
\end{equation}
so that, by \eqref{e3}, $\SExt^1_{\O_X}(\H,\O_X) \cong \O_W$. Therefore, dualizing \eqref{e1}, we find the exact sequence
$$0 \to \H^* \to \E^* \to V_0^* \otimes \O_X \to  \O_W \to 0$$
that splits into the exact sequences
\begin{equation}
\label{e5}
0 \to \H^* \to \E^* \to  \G \to 0
\end{equation}
and
\begin{equation}
\label{e6}
0 \to \G \to V_0^* \otimes \O_X \to \O_W \to 0.
\end{equation}
Assume, for the time being, that 
\begin{equation}
\label{e7}
H^1(\H^*)=0. 
\end{equation}
Then we have that $h^0(\G) \le h^0(\E^*)=s$ by \eqref{e5} and \eqref{e7}. But then \eqref{e6} gives that
$$r+1-k-s \le h^0(V_0^* \otimes \O_X) - h^0(\G) \le h^0(\O_W)$$
and therefore $W$ has at least $r+1-k-s$ connected components. Hence the same holds for all degeneracy loci $D_{r-k}(\varphi)$ that are reduced of pure codimension $k$ by Proposition \ref{conn}. 

It remains to show \eqref{e7}, for which we distinguish in cases, according to $k$. Let $\O_X(D) = \det \E = \det \F$. If $k=1$, we have that $\F = \det \E$ and \eqref{e2} shows that $\H \cong (\det \E)_{|W}$. Hence $\H^*=0$ and \eqref{e7} holds in this case. If $k=2$, we have that $\dim V_0 = r-1$, so that $\H \cong \I_{W/X}(D)$ by the Eagon-Northcott resolution. Since $\SHom_{\O_X}(\I_{W/X}, \O_X) \cong \O_X$ by \cite[Lemma IV.5.1]{ak}, we have that $\H^* \cong \SHom_{\O_X}(\I_{Z/X}(D), \O_X) \cong \O_X(-D)$, and then \eqref{e7} follows from Lemma \ref{c3no}(ii). As for the case $k\ge3$, we will use the hypothesis $H^t((\Lambda^{k-t-1}\E)(-D))=0$ for $1\le t\le k-2$ to prove \eqref{e7}. To this end, we first prove the ensuing

\begin{claim}
\label{kos}
We have:
\begin{itemize}
\item[(i)] $H^{k-1}(\O_X(-D))=0$.
\item [(ii)] $H^i((\Lambda^{k-i-1}\F)(-D))=0$, for $1 \le i \le k-2$. 
\end{itemize} 
\end{claim}
\begin{proof}
Since $c_k(\E) \ne 0$, (i) follows from Lemma \ref{c3no}(ii). As for (ii), consider, for each $i \in \{1,\ldots, k-2\}$, the Eagon-Northcott-type exact sequence associated to \eqref{e4}:
$$0 \to F_{k-i-1} \to \cdots \to F_1 \to F_0 \to (\Lambda^{k-i-1}\F)(-D)\to 0$$
where $F_j = S^j V_1\otimes (\Lambda^{k-i-j-1}\E)(-D), 0 \le j \le k-i-1$. In order to prove (ii), it will suffice, by \cite[Prop.~B.1.2(i)]{la1}), to show that $H^{i+j}(F_j)=0$ for all $0 \le j\le k-i-1$. Now, if $0\le j\le k-i-2$, we have that
$H^{i+j}(F_j)=S^jV_1\otimes H^{i+j}((\Lambda^{k-i-j-1}\E)(-D))=0$ by assumption. If $j=k-i-1$ we have that
$H^{k-1}(F_{k-i-1})=S^{k-i-1}V_1\otimes H^{k-1}(\O_X(-D))=0$ by (i). This proves Claim \ref{kos}.
\end{proof}
We now continue the proof of the lemma. By \eqref{e3}, we have the Koszul resolution
$$0 \to \Lambda^k \F^*\to \Lambda^{k-1}\F^\ast\to\cdots \to \Lambda^2 \F^* \to \F^* \to \I_{W/X} \to 0$$
that, using \eqref{en} and the fact that $\SExt^1_{\O_X}(\H,\O_X) \cong \O_W$, can be split into 
\begin{equation}
\label{e99}
0 \to \O_X(-D) \to \Lambda^{k-1} \F^* \to \cdots \to \Lambda^2 \F^* \to \H^* \to 0.
\end{equation}
Again by \cite[Prop.~B.1.2(i)]{la1}, in order to prove \eqref{e7}, we will need to show that $H^i(\Lambda^{i+1}\F^*)=0$ for $1 \le i \le k-1$. Finally, the latter follows from Claim \ref{kos} since $H^i(\Lambda^{i+1}\F^*) \cong H^i((\Lambda^{k-i-1}\F)(-D))$. This concludes the proof of the lemma.
\end{proof}

Regarding the assumption $H^t((\Lambda^{k-t-1}\E)(-\det\E))=0$ for $1 \le t \le k-2$ in Lemma \ref{connesse1}, we can find many examples of Ulrich bundles satisfying it. 

\begin{lemma}
\label{exam} 
Let $X \subset \P^N$ be a smooth irreducible variety of dimension $n \ge 3$ and degree $d$. Let $\E$ be a rank $r$ Ulrich bundle on $X$. Let $k \in \Z$ be such that $3 \le k \le r$. Consider the following conditions:
\begin{itemize}
\item[(i)] $\det\E=\O_X(u)$.
\item[(ii)] $\det\E=\O_X(u)$ and $X \subset \P^N$ is subcanonical of degree $d \ge 3$.
\item[(iii)] $\det\E=\O_X(u), n \ge k-1$ and $X \subset \P^N$ is subcanonical with $K_X=\O_X(-i_X)$ such that $i_X \le n+3-k$ and $\O_X(1)$ is $2n$-Koszul (see \cite{to} for the definition of $M$-Koszul line bundle).
\end{itemize}
Then $H^t((\Lambda^{k-t-1}\E)(-\det\E))=0$ for all $1\le t\le k-2$ is implied by (i) if $k=3,$ by (ii) if $k=4,$ by (iii) for any $k \ge 5$.
\end{lemma}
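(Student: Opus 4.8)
The plan is to use throughout that $\det\E=\O_X(u)$, which is part of each of (i)--(iii), so that the bundle to be killed is $(\Lambda^{k-1-t}\E)(-\det\E)=(\Lambda^{k-1-t}\E)(-u)$, a twist of an exterior power of $\E$ by a multiple of the hyperplane class $H$. Writing $m=k-1-t$, the index $m$ runs over $1\le m\le k-2$ and the required statement becomes $H^{k-1-m}((\Lambda^m\E)(-u))=0$. I would separate the contribution $m=1$, where no genuine exterior power occurs, from the contributions $m\ge 2$, which are the source of all the difficulty.

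For $m=1$ the target is $H^{k-2}(\E(-u))$. Since $\E$ is Ulrich it is aCM by Lemma \ref{ulr}(vi), that is $H^i(\E(j))=0$ for $1\le i\le n-1$ and every $j$; hence $H^{k-2}(\E(-u))=0$ as soon as $1\le k-2\le n-1$, i.e. $n\ge k-1$. This inequality is vacuous for $k=3$, is exactly the hypothesis $n\ge 3$ for $k=4$, and is built into condition (iii) for $k\ge 5$. In particular, for $k=3$ the index $m=1$ is the only one, so the lemma in that case follows from the aCM property alone, the sole role of $\det\E=\O_X(u)$ being to make the twist a multiple of $H$ so that aCM applies.

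For $m\ge 2$ the bundle $\Lambda^m\E$ is in general no longer aCM, so the vanishing is not formal, and this is the main obstacle. Here I would resolve the exterior power using the global generation of $\E$: starting from $0\to M_\E\to H^0(\E)\otimes\O_X\to\E\to 0$, the exterior power $\Lambda^m(H^0(\E)\otimes\O_X)$ carries the Koszul filtration with graded pieces $\Lambda^i M_\E\otimes\Lambda^{m-i}\E$, which isolates $\Lambda^m\E$ as the $i=0$ quotient and expresses $H^{k-1-m}((\Lambda^m\E)(-u))$ in terms of (a) the cohomology of the $\O_X(-u)$-twists of the trivial bundle, and (b) the cohomology of the pieces with $i\ge 1$ involving exterior powers of the syzygy bundle $M_\E$. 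The terms of type (a) are handled by subcanonicity $K_X=\O_X(-i_X)$ together with Kodaira vanishing, which give $H^j(\O_X(-u))=0$ for $0\le j\le n-1$ (note $u=\tfrac r2(n+1-i_X)>0$ by Lemma \ref{ulr}(x)), and the relevant degree $k-1-m\le k-3<n$ stays below the top. The terms of type (b) are precisely Koszul cohomology of the section algebra of $\O_X(1)$ with values in the Ulrich module $\bigoplus_j H^0(\E(j))$, and their vanishing through the homological degree dictated by $m$ is what the $2n$-Koszulness of $\O_X(1)$ (see \cite{to}) provides; Serre duality combined with the fact that $\E^*(K_X+(n+1)H)$ is again Ulrich (Lemma \ref{ulr}(ix)) converts the surviving top contributions into statements about global sections, and the bound $i_X\le n+3-k$ is exactly what forces the internal twists produced by this reduction to fall inside the Ulrich range, where the defining vanishings of an Ulrich bundle apply.

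Finally, for $k=4$ the machinery collapses and no Koszulness is needed, so that condition (ii) suffices: the only genuinely new index is $m=2$, and since over $\C$ one has $\E\otimes\E=S^2\E\oplus\Lambda^2\E$, it is enough to prove $H^1((\E\otimes\E)(-u))=0$. Here a single application of the global-generation sequence, tensored with $\E(-u)$, reduces the problem to one step of syzygies (using $H^1(\E(-u))=H^2(\E(-u))=0$ from aCM with $n\ge 3$), after which subcanonicity and Serre duality land the computation in the Ulrich vanishing range; the hypothesis $d\ge 3$ serves only to exclude the degenerate cases (linear spaces and quadrics) for which the residual boundary $H^0$-term could survive. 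The delicate point in the whole argument is the $m\ge 2$ analysis: because $\Lambda^m\E$ is not aCM one cannot bypass the syzygies of $X$ itself, and one must simultaneously match the cohomological degree $k-1-m$ against the range in which the $M$-Koszul hypothesis yields exactness and keep every intermediate twist inside the Ulrich window — this balance is what governs both the growth of the required Koszulness with $k$ and the precise inequality $i_X\le n+3-k$.
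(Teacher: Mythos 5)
Your handling of the index $t=k-2$ (your $m=1$) and of the whole case $k=3$ is correct and is exactly what the paper does: the aCM property of $\E$ (Lemma \ref{ulr}(vi)) together with $n\ge k-1$ kills $H^{k-2}(\E(-u))$. The genuine gap is at the indices $1\le t\le k-3$, i.e.\ your $m\ge 2$, which you correctly identify as the crux but do not resolve. Your plan is to filter $\Lambda^m\E$ through the evaluation sequence $0\to M_{\E}\to H^0(\E)\otimes\O_X\to\E\to 0$; the graded pieces of that filtration are $\Lambda^i M_{\E}\otimes\Lambda^{m-i}\E$, and nothing in hypotheses (ii) or (iii) controls the cohomology of exterior powers of the syzygy bundle $M_{\E}$ of $\E$. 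In particular, $2n$-Koszulness of $\O_X(1)$ is a condition on the section ring of the \emph{line bundle} $\O_X(1)$, not on $M_{\E}$, and the bridge between the two is never supplied; the vanishing of your ``type (b)'' terms is asserted, not proved. What \cite[Thm.~3.4]{to} actually yields --- and what the paper uses --- is that $0$-regularity of $\E$ propagates to $\Lambda^q\E$ when $\O_X(1)$ is $2n$-Koszul. The paper first rewrites $\Lambda^{k-t-1}\E(-u)\cong\Lambda^{r+t+1-k}\E^*$ and applies Serre duality to turn the target into $h^{n-t}(\Lambda^{r+t+1-k}\E(-i_X))$; the $0$-regularity of $\Lambda^q\E$ then gives the vanishing precisely when $i_X\le n-t$, and this is where $i_X\le n+3-k$ together with $t\le k-3$ enters --- not through keeping ``internal twists inside the Ulrich window''.

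For $k=4$ the same defect recurs: after splitting $\E\otimes\E=S^2\E\oplus\Lambda^2\E$ and tensoring the evaluation sequence with $\E(-u)$, you are left needing $H^2(M_{\E}\otimes\E(-u))=0$, which lies in no ``Ulrich vanishing range'' and is not established. The paper instead reduces, via the duality above, to $h^{n-1}(\Lambda^{r-2}\E(-i_X))=0$ and quotes \cite[Lemma 4.2(iii)]{lr3}; the hypothesis $d\ge 3$ is used only to force $i_X\le n-1$ (ruling out $\P^n$ and quadrics), so your guess about its role is close in spirit but not the actual mechanism. In short: the skeleton (isolate $m=1$, use aCM, then pass to duals) is right, but the central vanishing for $m\ge 2$ is missing, and the tool you invoke does not apply to the object you apply it to.
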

\begin{proof} 
The condition for $k=3$ is just $H^1(\E(-u))=0$ and it is immediately satisfied because $\E$ is aCM by Lemma \ref{ulr}(vi). Now let $k \ge 4$ and suppose that $X \subset\P^N$ is subcanonical with $K_X=\O_X(-i_X)$. Then $H^{k-2}(\E(-u))=0$ because $\E$ is aCM by Lemma \ref{ulr}(vi) and $n \ge k-1$ in any case. Using Serre duality we get 
\begin{equation}
\label{eq11}
h^t(\Lambda^{k-t-1}\E(-u))=h^{t}(\Lambda^{r+t+1-k}\E^\ast)=h^{n-t}(\Lambda^{r+t+1-k}\E(-i_X))
\end{equation} 
for all $1\le t\le k-3$. The claim for $k=4$ follows from \cite[Lemma 4.2(iii)]{lr3} since $d \ge 3$, hence $i_X\le n-1$. For $k \ge 5$, consider the assumptions in (iii). Then $\Lambda^q\E$ is still $0$-regular for every $q \ge 1$ by \cite[Thm. 3.4]{to} as $\O_X(1)$ is $2n$-Koszul. It follows that $H^i((\Lambda^q \E)(l))=0$ for $i \ge 1, l \ge -i, q \ge 1$. As $-i_X \ge -n-3+k \ge -n+t$ for $1 \le t \le k-3$, we obtain the conclusion by \eqref{eq11}.
\end{proof}

We point out that, as shown in the proof of Theorem \ref{connesse} below, to obtain, for any globally generated bundle $\E$, the vanishing $H^1(\E(-\det\E))=0$, it is enough to suppose that $c_3(\E) \ne 0$ and $H^1(\O_X)=0$, which is a much weaker assumption than Lemma \ref{exam}(i). Anyway, triples $(X,\O_X(1),\E)$ satisfying the conditions in Lemma \ref{exam} exist. For (i) and (ii), the conditions hold for example if $\Pic(X) \cong \Z \O_X(1)$ and $k=3$, or $k=4$ and $d \ge 3$. As for (iii), some examples can be obtained in $({\mathbb G}(1,r+1),\O_G(1))$ by \cite[Thm.~7.2.5]{cmrpl} and \cite{ra}.

When $k=2$, we can say more.

\begin{lemma}
\label{numcc}
In the case $k=2$, further assume in Lemma \ref{connesse1} that $c_1(\E)^3 \ne 0.$ Let $h=h^1(\E^*)$. Then:
\begin{itemize}
\item[(a)] If $h=0$, then the connected components of such $D_{r-2}(\varphi)$'s are exactly $r-s-1$.
\item[(b)] If $h>0,s=0$ and $H^1(\O_X)=0$, then the connected components of such $D_{r-2}(\varphi)$'s are exactly $r+h-1$. Moreover there exists a globally generated vector bundle $\tilde\E$ of rank $r+h$ on $X$ fitting into a non-split exact sequence 
$$0 \to \O_X^{\oplus h} \to \tilde\E \to \E \to 0$$ 
with $H^0(\tilde\E^*)=H^1(\tilde\E^*)=0$ and such that all reduced degeneracy loci $D_{r+h-2}(\tilde\varphi)$ of pure codimension $2$ for injective morphisms $\tilde\varphi : \O_X^{\oplus(r+h-1)} \to \tilde\E$ have exactly $r+h-1$ connected components. In addition to this, for any fixed such $W'=D_{r-2}(\varphi')$, there exists an injective morphism $\tilde\varphi' : \O_X^{\oplus(r+h-1)}\to \tilde\E$ such that $D_{r+h-2}(\tilde\varphi')=W'$.
\item[(c)] If $r=2$ and $s=0,$ then $H^1(\E(-D))\cong H^1(\E^*)\cong H^1(\I_{D_{r-2}(\varphi)/X})$ for any such degeneracy loci $D_{r-2}(\varphi)$.
\end{itemize}
\end{lemma}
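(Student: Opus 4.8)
The plan is to upgrade the inequality of Lemma~\ref{connesse1} to an exact count by running the cohomology of the two short exact sequences \eqref{e5} and \eqref{e6} produced in its proof. Recall that for $k=2$ one has $\H^*\cong\O_X(-D)$ with $\O_X(D)=\det\E$, and that the extra hypothesis $c_1(\E)^3\ne0$ gives, via Lemma~\ref{c3no}(ii), the vanishings $H^j(\O_X(-D))=0$ for $0\le j\le 2$. Feeding $H^0(\H^*)=H^1(\H^*)=0$ into \eqref{e5} yields $H^0(\E^*)\cong H^0(\G)$, and feeding $H^1(\H^*)=H^2(\H^*)=0$ into \eqref{e5} yields $H^1(\G)\cong H^1(\E^*)$; hence $h^0(\G)=s$ and $h^1(\G)=h$. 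The long exact sequence of \eqref{e6},
$$0\to H^0(\G)\to V_0^*\to H^0(\O_W)\to H^1(\G)\to V_0^*\otimes H^1(\O_X),$$
then controls $h^0(\O_W)$, which equals the number of connected components since $W=D_{r-2}(\varphi_0)$ is reduced; Proposition~\ref{conn} transfers the count to all the $D_{r-2}(\varphi)$ under consideration.

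For (a), $h=0$ forces $H^1(\G)=0$, whence $h^0(\O_W)=\dim V_0-h^0(\G)=(r-1)-s$. For (c) I would switch to the rank-two self-duality $\E^*\cong\E(-D)$, which gives $H^1(\E(-D))\cong H^1(\E^*)$ at once; twisting the Koszul sequence $0\to\O_X\xrightarrow{\varphi}\E\to\I_{Z/X}(D)\to0$ (with $Z:=D_{r-2}(\varphi)$ the zero scheme of the section) by $-D$ produces $0\to\O_X(-D)\to\E^*\to\I_{Z/X}\to0$, and $H^1(\O_X(-D))=H^2(\O_X(-D))=0$ give $H^1(\E^*)\cong H^1(\I_{Z/X})$.

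For (b), with $s=0$, $h>0$ and $H^1(\O_X)=0$, the term $V_0^*\otimes H^1(\O_X)$ vanishes, so $h^0(\O_W)=\dim V_0+h^1(\G)=(r-1)+h$, giving the stated component count. I then construct $\tilde\E$ as the universal extension of $\E$ by $U^*\otimes\O_X$, where $U=H^1(\E^*)$, $\dim U=h$: namely the class in $\Ext^1(\E,U^*\otimes\O_X)=\End(U)$ equal to $\id_U$, whose defining property is that the connecting map $H^0(\O_X^{\oplus h})\to H^1(\E^*)$ in the dual sequence $0\to\E^*\to\tilde\E^*\to\O_X^{\oplus h}\to0$ is an isomorphism. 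Chasing that sequence with $H^0(\E^*)=0$ and $H^1(\O_X)=0$ yields $H^0(\tilde\E^*)=H^1(\tilde\E^*)=0$, and the extension is non-split precisely because $h>0$; global generation of $\tilde\E$ follows since $H^0(\tilde\E)\to H^0(\E)$ is onto (as $H^1(\O_X^{\oplus h})=0$) and both $\O_X^{\oplus h}$ and $\E$ are globally generated. Since $c_\bullet(\tilde\E)=c_\bullet(\E)$, part (a) applies to $\tilde\E$ (with $\tilde s=\tilde h=0$) and returns exactly $(r+h)-1$ components. For the last claim, given $W'=D_{r-2}(\varphi')$ with $\varphi'$ built from $V'\subset H^0(\E)$, I lift $V'$ to $\tilde V'\subset H^0(\tilde\E)$ and set $\tilde\varphi'=(\tilde\varphi_1',\iota):\O_X^{\oplus(r-1)}\oplus\O_X^{\oplus h}\to\tilde\E$, with $\iota$ the subbundle inclusion and $\tilde\varphi_1'$ a lift of $\varphi'$; since $\rk\tilde\varphi'(x)=h+\rk\varphi'(x)$ at every point and the local block form $\left(\begin{smallmatrix}I_h&A\\0&B\end{smallmatrix}\right)$, with $B$ a local matrix of $\varphi'$, identifies the maximal-minor ideal with that of $\varphi'$, one obtains $D_{r+h-2}(\tilde\varphi')=W'$ scheme-theoretically, and $\tilde\varphi'$ is injective because its generic rank is full.

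The main obstacle I anticipate is entirely in (b): pinning down the universal extension class so that the connecting map is an isomorphism while simultaneously securing global generation of $\tilde\E$, the two vanishings $H^0(\tilde\E^*)=H^1(\tilde\E^*)=0$ and non-splitness, and then matching the degeneracy loci scheme-theoretically via the block-minor computation. By contrast, the cohomology accounting behind the counts in (a), (b) and the isomorphisms in (c) is routine once the vanishing $H^{\le 2}(\O_X(-D))=0$ is in hand.
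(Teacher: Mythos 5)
Your proposal is correct and follows essentially the same route as the paper: the exact counts in (a) and (b) come from chasing cohomology through the sequences \eqref{e5} and \eqref{e6} using $H^{j}(\O_X(-D))=0$ for $j\le 2$, part (c) from the rank-two self-duality $\E^*\cong\E(-D)$ and the twisted Koszul sequence, and the bundle $\tilde\E$ in (b) is the same universal extension by $H^1(\E^*)^*\otimes\O_X$ with class $\Id$, with the same block-rank argument identifying $D_{r+h-2}(\tilde\varphi')$ with $W'$ and the same reduction to part (a).
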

\begin{proof}
The assumption $c_1(\E)^3 \ne 0$ gives $H^j(\O_X(-D))=0$ for $0 \le j \le 2$ by Lemma \ref{c3no}(ii). Also we have that $\H^* \cong \O_X(-D)$. To see (a), since $H^1(\E^*)=0$, we get from \eqref{e5} that $h^0(\G)=s$ and $H^1(\G)=0$. Therefore, \eqref{e6} implies that $h^0(\O_W)=h^0(V_0^* \otimes\O_X) =r-s-1$, as claimed. 

As for (b), \eqref{e5} yields $h^0(\G)=0$ and $H^1(\G)\cong H^1(\E^\ast)$. Therefore, by \eqref{e6}, we obtain $h^0(\O_W)=r+h-1$ as desired. 
Now, take $\Xi= H^1(\E^*)$ and suppose that $h=dim(\Xi)>0$. Since 
$$\Id_{\Xi} \in Hom(\Xi, \Xi) \cong \Xi^* \otimes \Xi \cong \Ext^1(\E, \Xi^* \otimes \O_X)$$ 
this gives a non-split exact sequence of vector bundles 
\begin{equation}
\label{ti}
0 \to \Xi^* \otimes \O_X \mapright{\xi} \ \tilde\E \to \E \to 0
\end{equation}
with $\tilde\E$ of rank $r+h$. It follows that $c_i(\tilde\E)=c_i(\E)$ for all $i\ge1$ and that $\tilde\E$ is globally generated since $\E$ is and $H^1(\O_X)=0$. Dualizing \eqref{ti} and taking cohomology we get, using $H^0(\E^*)=0$, an exact sequence
$$0 \to H^0(\tilde\E^*) \to \Xi \mapright{\delta} \Xi \to H^1(\tilde\E^*) \to 0$$
where, by construction, $\delta=\Id_{\Xi}$. It follows that $h^0(\tilde\E^*)=h^1(\tilde\E^*)=0$. Now, the map on global sections $H^0(\tilde\E) \mapright{\alpha} H^0(\E)$ is surjective as $H^1(\O_X)=0$. Let $V' \subset H^0(\E)$ with $\dim V' = r-1$ and such that $W':=D_{r-2}(\varphi')$ is reduced of pure codimension $2$, where $\varphi' : V'\otimes\O_X \to \E$. Set $\tilde V'= \alpha^{-1}(V') \subset H^0(\tilde\E)$. Then $\tilde V' \cong \Xi^* \oplus V'$ and there is a commutative diagram
$$\xymatrix{0 \ar[r] &  \Xi^*\otimes\O_X \ar[d]^{\Id_{\Xi^*}} \ar[r] & \tilde V'\otimes\O_X \ar[d]^{\tilde\varphi'} \ar[r] & V'\otimes\O_X  \ar[r] \ar[d]^{\varphi'} & 0 & \\ 0 \ar[r] & \Xi^*\otimes\O_X \ar[r]^\xi & \tilde\E \ar[r] & \E \ar[r] & 0. &}$$
To show that $D_{r+h-2}(\tilde\varphi')=W'$, observe that, since $\xi$ never drops rank, the second row remains exact after tensoring with $k(x)=\O_{X,x}/\mathfrak{m}_x$ for any point $x \in X$. Hence we have $\tilde\E(x)\cong \Xi^\ast(x)\oplus\E(x),$ where $\Xi^\ast(x)\cong (\Xi^*\otimes\O_X)(x)$ is the isomorphic image of $\xi(x)$. Therefore $\tilde\varphi'(x)=\xi(x) \oplus \varphi'(x)$. Since $\xi(x)$ has maximal rank, this says that $\tilde\varphi'(x)$ drops rank if and only if $\varphi'(x)$ does, which means that $D_{r+h-2}(\tilde\varphi')$ and $D_{r-2}(\varphi')$ coincide as subschemes of $X$, because they are locally defined by the vanishing of the same minors (see Remark \ref{degeneracy}). By part (a) we get (b). Finally, to prove (c), observe that \eqref{e1} twisted by $\O_X(-D)$ reads
$$0 \to \O_X(-D) \to \E^* \to \I_{W/X} \to 0$$
where we used $\E^*\cong \E(-D)$. Taking cohomology and recalling that $H^j(\O_X(-D))=0$ for $1 \le j \le 2$, we get the claim.
\end{proof}

For $k=3$ we have.

\begin{lemma}
\label{sharp}
In the case $k=3$, further assume in Lemma \ref{connesse1} that $X \subset \P^N$ is subcanonical with $n \ge 4$ and that $\E$ is aCM with $\det\E=\O_X(u), u>0$. Then the connected components of such $D_{r-3}(\varphi)$'s are exactly $r-s-2$.
\end{lemma}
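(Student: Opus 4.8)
The plan is to run the proof of Lemma~\ref{connesse1} for $k=3$ and to show that, under the extra hypotheses, every inequality used there to bound the number of components from below becomes an equality. By Proposition~\ref{conn} all the relevant $D_{r-3}(\varphi)$ have the same number of connected components, so I may work with $\varphi=\varphi_0$ attached to a general subspace $V_0\subset H^0(\E)$ of dimension $r-2$, and set $W=D_{r-3}(\varphi_0)$, reduced of pure codimension $3$; being reduced, $W$ has exactly $h^0(\O_W)$ connected components. Recall from the proof of Lemma~\ref{connesse1} the exact sequences \eqref{e5} and \eqref{e6} together with $H^1(\H^*)=0$ (this is \eqref{e7}). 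From \eqref{e5} one gets $h^0(\G)=s-h^0(\H^*)$, and from the long exact sequence of \eqref{e6} one gets $h^0(\O_W)=(r-2)-h^0(\G)$ as soon as $H^1(\G)=0$. Thus it suffices to establish the two vanishings $h^0(\H^*)=0$ and $H^1(\G)=0$, which then force $h^0(\O_W)=(r-2)-s=r-s-2$.

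First I would collect the cohomological input. Since $\det\E=\O_X(u)$ we have $c_1(\E)=uH$, hence $c_1(\E)^t=u^tH^t\neq0$ for all $t\le n$; in particular $c_1(\E)^4\neq0$ because $n\ge4$, and Lemma~\ref{c3no}(ii) (applied with $t=4$) yields $H^j(\O_X(-u))=0$ for $0\le j\le3$. Because $\E$ is aCM we also have $H^i(\E(-u))=0$ for $0<i<n$, so in particular for $i=1,2$. Finally, since $X$ is subcanonical, writing $K_X=-i_XH$ and using Serre duality gives $H^1(\E^*)\cong H^{n-1}(\E(-i_X))^*=0$ by the aCM property (here $0<n-1<n$).

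Next I would analyze $\H^*$ through the Koszul presentation \eqref{e99}, which for $k=3$ reads $0\to\O_X(-u)\to\F(-u)\to\H^*\to0$, where $\F$ is the rank~$3$ bundle of \eqref{e4} and I have used $\Lambda^3\F^*=\O_X(-u)$ and $\Lambda^2\F^*\cong\F(-u)$. Twisting the sequence $0\to V_1\otimes\O_X\to\E\to\F\to0$ by $-u$ and chasing cohomology with the vanishings above gives $H^2(\F(-u))\hookrightarrow V_1\otimes H^3(\O_X(-u))=0$; then the Koszul sequence, together with $H^2(\O_X(-u))=H^3(\O_X(-u))=0$, yields $H^2(\H^*)\cong H^2(\F(-u))=0$. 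Feeding $H^2(\H^*)=0$ and $H^1(\E^*)=0$ into the long exact sequence of \eqref{e5} gives $H^1(\G)=0$. For the last vanishing $h^0(\H^*)=0$ I would argue directly from the four-term sequence $0\to\H^*\to\E^*\to V_0^*\otimes\O_X\to\O_W\to0$ of the proof of Lemma~\ref{connesse1}: here $h^0(\H^*)=\dim\ker\bigl(H^0(\E^*)\to V_0^*\bigr)$, the map sending $\eta$ to $v\mapsto\eta(v)$. Global generation of $\E$ makes $H^0(\E^*)\hookrightarrow H^0(\E)^*$ injective, so this kernel is $H^0(\E^*)\cap V_0^{\perp}$; as $V_0$ is general of dimension $r-2$ while $\dim H^0(\E^*)=s\le r-3$ by Lemma~\ref{c3no}(iii), the intersection is zero. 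With both vanishings in hand the count closes.

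I expect the genuine obstacle to be $H^2(\H^*)=0$ (equivalently $H^1(\G)=0$): in contrast with the case $k=2$, where $\H^*$ is the line bundle $\O_X(-u)$ and all needed vanishings are immediate from Lemma~\ref{c3no}(ii), here one must kill the third cohomology $H^3(\O_X(-u))$, for which the aCM hypothesis on $\E$ is of no help. The decisive observation is that $\det\E=\O_X(u)$ forces $c_1(\E)=uH$, so the crucial $c_1(\E)^4\neq0$ holds for free precisely because $n\ge4$; this is exactly where the dimension hypothesis enters and where the subcanonical--aCM package becomes self-contained.
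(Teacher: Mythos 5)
Your proof is correct and follows essentially the same route as the paper's: both arguments sharpen the count in Lemma~\ref{connesse1} by establishing $H^0(\H^*)=H^1(\H^*)=H^2(\H^*)=0$ (hence $h^0(\G)=s$ and $H^1(\G)=0$, whence $h^0(\O_W)=r-s-2$ via \eqref{e5} and \eqref{e6}), using the aCM and subcanonical hypotheses for $H^1(\E(-u))=H^2(\E(-u))=H^1(\E^*)=0$, the vanishing $H^j(\O_X(-u))=0$ for $j\le 3$, and the sequences \eqref{e4} and \eqref{e99}. The one genuine divergence is the step $h^0(\H^*)=0$: the paper obtains it cohomologically, first deducing $H^0(\E(-u))=0$ from the Eagon--Northcott sequence \eqref{succ} (using $c_2(\E)\ne0$ and the nonemptiness of the codimension-two locus) and then $H^0(\F(-u))=0$, whereas you read it off the four-term dual sequence as the generic vanishing of $H^0(\E^*)\cap V_0^{\perp}$ inside $H^0(\E)^*$, using the injectivity of $H^0(\E^*)\to H^0(\E)^*$ coming from global generation and the bound $s\le r-3$ from Lemma~\ref{c3no}(iii). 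Both are valid; your variant is slightly more elementary for that step and avoids the detour through $c_2(\E)\ne 0$, at the negligible cost of one more appeal to the genericity of $V_0$, which Proposition~\ref{conn} licenses anyway.
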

\begin{proof} 
Since $\E$ is aCM and $X$ is subcanonical, we have that $H^i(\E(-u))=0$ for $1 \le i \le 2$ and $H^1(\E^*) \cong H^{n-1}(\E(-i_X))=0$. Now, $c_3(\E) \ne 0$, hence also $c_2(\E) \ne 0$ by Lemma \ref{gg}. Choosing a general subspace $V \subset H^0(\E)$ of dimension $r-1$, we get that \eqref{succ} holds with $Z \ne \emptyset$ by Lemma \ref{gg}. Hence \eqref{succ} implies that $H^0(\E(-u))=0$. Also, $H^i(\O_X(-u))=0$ for $1 \le i \le 3$ by Kodaira vanishing. Using the exact sequence \eqref{e4} we deduce that $H^i(\F(-u))=0$ for $0 \le i \le 2$. Since $\F$ has rank $3$, \eqref{e99} becomes
$$0 \to \O_X(-u) \to \F(-u) \to \H^* \to 0$$
and we find that $H^i(\H^*)=0$ for $0 \le i \le 2$. Now, \eqref{e5} gives that $H^1(\G)=0$ and $h^0(\G)=s$ and then \eqref{e6} implies that $h^0(\O_W)=r-s-2$ as required. 
\end{proof}

We now prove Theorem \ref{connesse}.

\renewcommand{\proofname}{Proof of Theorem \ref{connesse}}
\begin{proof}
Let $H$ be a very ample divisor on $X$. We have that $r \ge k+s$ by Lemma \ref{c3no}(iii) and $k \le n$ since $c_k(\E) \ne 0$. 

First, assume that $k \in \{1, 2\}$. The fact that (i) implies (ii) is the content of Lemma \ref{connesse1}. Now assume (iii). If $k=n$ we have obviously $c_{k+1}(\E)=0$, hence we can assume that $k \le n-1$. Let $V_0 \subset H^0(\E)$ be a general subspace of dimension $r+1-k$ and let $\varphi_0 : V_0 \otimes \O_X \to \E$. It follows from Lemmas \ref{gg} and \ref{red} that $Z:=D_{r-k}(\varphi_0)$ is reduced of pure codimension $k$ and $Z$ is not connected by hypothesis. 

If $n=k+1$, we set $X'=X, \E'=\E$ and $Z'=Z$. If $n \ge k+2$, cutting down with $n-k-1$ general $H_1, \ldots, H_{n-k-1} \in |H|$, we get a smooth $(k+1)$-fold $X'$ and a globally generated bundle $\E'=\E_{|X'}$ with $c_k(\E') \ne 0$ by Lemma \ref{gg}(ii). Moreover, observe that
$$Z \cap H_1 \cap \ldots \cap H_{n-k-1} = Z \cap X'=D_{r-k}(\varphi_0) \cap X' =D_{r-k}({\varphi_0}_{|X'})$$
is reduced of pure codimension $k$ and disconnected. Let $V' \subset H^0(\E')$ be a general subspace of dimension $r+1-k$, let $\varphi' : V' \otimes \O_{X'} \to \E'$, so that $Z':=D_{r-k}(\varphi')$ is reduced of pure codimension $k$ by Lemmas \ref{gg} and \ref{red}. Also, $Z'$ is disconnected by Proposition \ref{conn}. 

If $k=1$, we have that $X'$ is a smooth surface and $Z' \in |\det \E'|$ is disconnected, hence, since $\det \E'$ is globally generated, it follows from Bertini's theorem that $c_1(\E')^2=0$, and therefore $c_2(\E)=0$ by Lemmas \ref{c3no}(i) and \ref{gg}(ii).

If $k=2$, Lemma \ref{zeta1} gives a smooth irreducible surface $Y' \in |\det \E'|$ containing $Z'$, with $\O_{Y'}(Z')$ globally generated. Since $Z' \in |\O_{Y'}(Z')|$ is disconnected, Bertini's theorem implies that $|\O_{Y'}(Z')|$ is composite with a pencil, and therefore $(Z')^2=0$. Then, \eqref{c33} gives that $c_3(\E)H^{n-3}= c_3(\E')=(Z')^2=0$, hence $c_3(\E)=0$ by Lemma \ref{gg}(ii). 

Thus, (i) is proved in both cases $k \in \{1, 2\}$. Now, if $r \ge k+s+1$, then clearly (ii) implies (iii) and therefore, if $r \ge k+s+1$, we have that (i), (ii) and (iii) are equivalent.

Next, we show that (i) implies (ii) when $k=3$ and $H^1(\O_X)=0$. To this end, setting $\O_X(D)= \det \E$,  it is enough to prove by Lemma \ref{connesse1} that 
\begin{equation}
\label{e-d}
H^1(\E(-D))=0.
\end{equation} 
Let $V \subset H^0(\E)$ be a general subspace of dimension $r-1$ and let $Z=D_{r-2}(\varphi)$, where $\varphi : V \otimes \O_X \to \E$. Since $c_3(\E) \ne 0$ by hypothesis, it follows from Lemma \ref{gg} that $c_2(\E) \ne 0$. Hence $Z$ is connected, since for $k=2$ we have already proved that (iii) implies (i). Also, Lemma \ref{c3no}(ii) gives that $H^1(\O_X(-D))=0$. Since $H^1(\O_X)=0$, the exact sequence
$$0 \to \I_{Z/X} \to \O_X \to \O_Z \to 0$$
shows that $H^1(\I_{Z/X})=0$. Then \eqref{e-d}  follows from the exact sequence
$$0 \to V \otimes \O_X(-D) \to \E(-D) \to \I_{Z/X} \to 0.$$
Now, assume again that $k \in \{1, 2\}$. To see (iv), observe that if $D_{r-k}(\varphi)$ is disconnected, then (iii) holds and therefore so does (i), contradicting Proposition \ref{b+}(i). As for (v), assume that $\varphi : \O_X^{\oplus (r+1-k)} \to \E$ is a general morphism and that $D_{r-k}(\varphi)$ is singular. It follows from Lemma \ref{red}, that $D_{r-k-1}(\varphi)=\Sing(D_{r-k}(\varphi)) \ne \emptyset$ of the expected codimension $2k+2$. Then $[D_{r-k-1}(\varphi)]=c_{k+1}(\E)^2-c_k(\E)c_{k+2}(\E)$ by Porteous' formula (see for example \cite[Thm.~12.4]{eh}). Hence, if $D_{r-k}(\varphi)$ were disconnected, we would have, by Theorem \ref{connesse}, that $c_{k+1}(\E)=0$ and also $c_{k+2}(\E)=0$ by Lemma \ref{gg}, hence the contradiction $[D_{r-k-1}(\varphi)]=0$.
\end{proof}
\renewcommand{\proofname}{Proof}

The following simple example shows that Theorem \ref{connesse} (or any possible generalization to $k \ge 3$) is sharp regarding the inequality $r \ge s+k+1$.

\begin{remark}
\label{bana}
Let $1 \le k \le \min\{r,n\}$, let $\G=\O_{\P^k}(1)^{\oplus k} \oplus \O_{\P^k}^{\oplus (r-k)}$, let $X=\P^{n-k} \times \P^k$ and let $\E=\pi^*\G$, where $\pi : \P^{n-k} \times \P^k \to \P^k$ is the second projection. Then we have that $\E$ is globally generated, $s=h^0(\E^*)=r-k$ and, for any $t \in \P^k$, $c_k(\E)=[\P^{n-k} \times \{t\}] \ne 0, c_{k+1}(\E)=0$. Moreover, if $\varphi: \O_X^{\oplus (r+1-k)} \to \E$ is general, then $D_{r-k}(\E)=\P^{n-k} \times \{t\}$ is connected. 
\end{remark}

Next, we prove Corollary \ref{singconn}.

\renewcommand{\proofname}{Proof of Corollary \ref{singconn}}
\begin{proof}
We have that $\E$ is globally generated by Lemma \ref{ulr}(i) and $h^0(\E^*)=0$, for otherwise Lemma \ref{ulr}(iv) gives the contradiction $c_2(\E)=0$. Hence Theorem \ref{connesse} with $k=2$ applies to Ulrich subvarieties by their same definition and Lemma \ref{gg}. Thus, we get (v) and the equivalence of (i), (ii) and (iii). As for (iv), by \cite[Thm.~2]{bu} we know that 
\begin{equation}
\label{cop}
\B_+(\E)=\bigcup_L L
\end{equation}
where $L$ ranges over all lines $L \subset X$ such that $\E_{|L}$ is not ample. The assumption in (iv) and \eqref{cop} say exactly that $\B_+(\E) \ne X$, therefore (iv) follows from Theorem \ref{connesse}(iv).
\end{proof}
\renewcommand{\proofname}{Proof}

\begin{remark}
\label{b+ulrich}
Let $n \ge 3, r \ge 3$, and let $\E$ be a rank $r$ Ulrich bundle on $X \subset \P^N$. If $X$ is not covered by lines, then all Ulrich subvarieties associated to $\E$ are connected by Corollary \ref{singconn}(iv). However, Ulrich subvarieties can be connected even if $X$ is covered by lines. For instance, there are varieties $X\subset\P^N$ covered by lines supporting very ample Ulrich bundles $\E$ (see \cite[Rmk.~4.3(i)]{ls}). Then $\E_{|L}$ is very ample on all lines $L\subset X$, whence Corollary \ref{singconn}(iv) applies. 
\end{remark}

\begin{remark}
The converse of  Theorem \ref{connesse}(iv) does not hold. For instance, as we will see in Proposition \ref{non big}(i), all non-big Ulrich bundles $\E$ on threefolds $X \subset \P^N$, thus having $\B_+(\E)=X$, which satisfy $c_1(\E)^3>0$, have connected associated Ulrich subvarieties. Other examples are quadrics $Q_n \subset \P^{n+1}$ for $n\ge 3$ with $\E$ being any Ulrich bundle: Indeed all Ulrich subvarieties associated to $\E$ are connected by Lemma \ref{h1=0}(ii), but $\B_+(\E)=Q_n$ by \cite[Cor.~1.6]{o} and \cite[Thm.~2]{bu}, because all Ulrich bundles are direct sum of spinor bundles.
\end{remark}

\begin{remark}
\label{numcc2}
If $\E$ is Ulrich with $c_2(\E) \ne 0, c_3(\E)=0$ and $X \subset \P^N$ is subcanonical of dimension $n \ge 4$, then all Ulrich subvarieties associated to $\E$ have exactly $r-1$ connected components, unless $(X, \O_X(1), \E)=(\P^2 \times \P^2, \O_{\P^2}(1) \boxtimes \O_{\P^2}(1), \pi^*(\O_{\P^2}(2))^{\oplus r})$, where $\pi : \P^2 \times \P^2 \to \P^2$ is a projection, and, in the latter case, all Ulrich subvarieties have exactly $2r(r-1)$ connected components. Indeed, observe that $s=h^0(\E^*)=0$ by Lemma \ref{ulr}(iv) and $h^1(\E^*)=h^{n-1}(\E(K_X))=0$ by Lemma \ref{ulr}(vi). Hence, if $c_1(\E)^3 \ne 0$, Lemma \ref{numcc} applies. If $c_1(\E)^3=0$, we can repeat the proof of \cite[Cor.~4]{ls} using now the fact that $c_1(\E)^3=0$ and that $3 \le \lfloor \frac{n}{2}+1 \rfloor$. It follows from that proof that $(X,\O_X(1),\E)$ is a linear Ulrich triple over a surface, because $c_2(\E) \ne 0$. On the other hand, $X$ is subcanonical, hence the only possibility is that $K_X=-(n-1)H$, so that $X$ is a del Pezzo manifold. Since $\rho(X) \ge 2$, by the classification of del Pezzo manifolds (see for example \cite[pages 860-861]{lp}, \cite[Table, page 710]{f1}), we deduce the only possible case $(X, \O_X(1))=(\P^2 \times \P^2, \O_{\P^2}(1) \boxtimes \O_{\P^2}(1))$. Then, it follows from \cite[Cor.~4.9]{ls} that $\E=\pi^*(\O_{\P^2}(2))^{\oplus r})$. Therefore Lemma \ref{usub} implies that general Ulrich subvarieties associated to $\E$ are a disjoint union of $2r(r-1)=c_2(\O_{\P^2}(2)^{\oplus r})$ planes. Hence all Ulrich subvarieties have exactly $2r(r-1)$ connected components by Proposition \ref{conn}.
\end{remark}

\section{Some connectedness statements and examples}

A special case in which degeneracy loci $D_{r-n}(\varphi)$ associated to $\E$ are connected is when $c_n(\E)=1$. We observe a few things about this.

\begin{remark} (We thank F. Moretti for observing that we could use \cite[Prop.~1.5]{mo}.) 
\label{moretti}
Let $\E$ be a rank $r$ globally generated bundle on $X$ with $c_n(\E)=1$. Then $X$ is rational. Indeed, since $c_n(\E)=1$, we have that $r \ge n$. Now, if $r=n$, set $\F=\E$. If $r > n$, consider a general morphism $\varphi : \O_X^{\oplus (r-n)} \to \E$. It follows from Lemma \ref{red} that $D_{r-n-1}(\varphi)= \emptyset$, hence we have an exact sequence
$$0 \to \O_X^{\oplus (r-n)} \to \E \to \F \to 0$$
where $\F$ is a rank $n$ globally generated bundle with $c_n(\F)=c_n(\E)=1$. Now, in any case, $H^0(\F^*)=0$ by Lemma \ref{c3no}(ii). Note that $h^0(\F) \ge n+1$: if $h^0(\F)=n$, then $\F \cong \O_X^{\oplus n}$ and therefore $c_n(\F)=0$, a contradiction. Let $W \subseteq H^0(\F)$ be a general subspace of dimension $n+1$, so that $W$ generates $\F$ in codimension $2$, that is away from $D_{n-1}(\varphi_W)$, where $\varphi_W : W \otimes \O_X \to \F$, by Lemma \ref{red}. Moreover, if $\sigma \in W$ is a general section, that is a general section in $H^0(\F)$, we have that $Z(\sigma)$ is $0$-dimensional of degree $c_n(\F)=1$. Therefore \cite[Prop.~1.5]{mo} gives that $X$ is rational.
\end{remark}

In the case of Ulrich bundles, while the case $n=2$ is treated in Theorem \ref{superficie}, here we give a few examples in rank $n$ and we prove that, in many cases, $c_n(\E)=1$ cannot happen in rank $r > n$. 

\begin{remark}
\label{cn=1-bis}
Let $X \subset \P^{2n}$ be the rational normal scroll $\varphi_{\xi}(\P(\F))$, where $\F=\O_{\P^1}(1)^{\oplus (n-1)} \oplus \O_{\P^1}(2)$ and $\xi$ is the tautological line bundle. We have that $\L = \xi-\pi^*\O_{\P^1}(1)$ is Ulrich on $X$ (see Example \ref{ese}) and therefore so is $\E = \L^{\oplus n}$ and $c_n(\E)=\L^n=1$. Case (iii) of Theorem \ref{superficie} corresponds to $n=2$. 
\end{remark}

On quadrics we have

\begin{lemma}
\label{cn=1-tris}
Let $n \ge 2$ and let $Q_n \subset \P^{n+1}$ be a smooth quadric. The only rank $n$ Ulrich bundles $\E$ on $Q_n$ with $c_n(\E)=1$ are the following: 
\begin{itemize}
\item[(i)] $n=2, \E=\mathcal S' \oplus \mathcal S''$.
\item [(ii)] $n=4, \E=(\mathcal S')^{\oplus 2}$ or $(\mathcal S'')^{\oplus 2}$.
\item[(iii)] $n=8, \E=\mathcal S'$ or $\mathcal S''$.
\end{itemize} 
\end{lemma}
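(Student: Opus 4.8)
The plan is to combine the classification of Ulrich bundles on quadrics with a rank count and explicit top-Chern-class computations in the three surviving dimensions. \emph{First}, I would invoke the fact that, on $Q_n$, every indecomposable aCM bundle is a twist of a line bundle or of a spinor bundle \cite{o}; since $Q_n$ has degree $2$ and is not a projective space, Lemma \ref{ulr}(v) excludes all Ulrich line bundles, and exactly one twist of each spinor bundle is Ulrich, namely the normalised $\mathcal S, \mathcal S', \mathcal S''$ of Definition \ref{spin}. Hence an Ulrich $\E$ on $Q_n$ splits as $\E \cong \mathcal S^{\oplus a}$ if $n$ is odd, and as $\E \cong (\mathcal S')^{\oplus a} \oplus (\mathcal S'')^{\oplus b}$ if $n$ is even, with $a,b \ge 0$ and $a+b \ge 1$.

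\emph{Second}, I would impose $\rk \E = n$. Since the spinor bundle on $Q_n$ has rank $2^{\lfloor (n-1)/2 \rfloor}$, the equality $\rk \E = n$ forces $2^{\lfloor (n-1)/2 \rfloor} \mid n$. For odd $n \ge 3$ this is impossible: the spinor rank $2^{(n-1)/2}$ is even while $n$ is odd. For even $n = 2m$ it becomes $2^{m-1} \mid 2m$, which holds precisely for $m \in \{1,2,4\}$, that is $n \in \{2,4,8\}$, with the number of spinor summands $a+b$ equal to $2$, $2$ and $1$ respectively. This is exactly where the three exceptional dimensions arise.

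\emph{Third}, in each surviving dimension I would compute $c_n(\E)$ and impose $c_n(\E)=1$, using Lemma \ref{gg} to read $c_n$ as the degree of the zero scheme of a general section. For $n=2$, on $Q_2 \cong \P^1 \times \P^1$ the two spinor bundles are the rulings $\O(1,0)$ and $\O(0,1)$; among the three rank-$2$ sums only $\mathcal S' \oplus \mathcal S''$ has $c_2 = 1$, the two pure sums giving $c_2 = 0$, which yields (i). For $n=4$, using $Q_4 \cong {\mathbb G}(1,3)$ I would identify $c_2(\mathcal S')$ and $c_2(\mathcal S'')$ with the Schubert classes $\sigma_2$ and $\sigma_{1,1}$ of the two families of planes; from $\sigma_2^2 = \sigma_{1,1}^2 = [\mathrm{pt}]$ and $\sigma_2 \sigma_{1,1} = 0$ one obtains $c_4((\mathcal S')^{\oplus 2}) = c_4((\mathcal S'')^{\oplus 2}) = 1$ and $c_4(\mathcal S' \oplus \mathcal S'') = 0$, giving (ii). Here the nonzero top class only materialises because two rank-$2$ summands are combined.

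\emph{Finally}, the case $n=8$, where $\E = \mathcal S'$ or $\mathcal S''$ is a single spinor bundle of rank $8 = \dim Q_8$, is the main obstacle: one must prove $c_8(\mathcal S') = c_8(\mathcal S'') = 1$. The inequality $c_8 \ge 1$ is immediate, since $\mathcal S'$ is globally generated with $H^0((\mathcal S')^*) = 0$ by Lemma \ref{ulr}, so a general section has no nowhere-vanishing representative and its zero scheme is finite and nonempty by Lemma \ref{gg}. The delicate point is the exact value $c_8 = 1$, i.e.\ that this zero scheme is a single reduced point; I would extract it from the explicit Chern classes of the rank-$8$ spinor bundle on $Q_8$ obtained from Ottaviani's description \cite{o} together with the intersection ring of $Q_8$, the equality $c_8(\mathcal S') = c_8(\mathcal S'')$ following from the automorphism of $Q_8$ exchanging the two spinor bundles. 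Matching the resulting list against the dimension and splitting constraints collected above then leaves exactly the triples (i)--(iii).
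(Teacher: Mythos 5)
Your proposal is correct and follows essentially the same route as the paper's proof: decompose any Ulrich bundle on $Q_n$ into (normalised) spinor bundles, use the divisibility of $n$ by the spinor rank $2^{\lfloor (n-1)/2\rfloor}$ to force $n \in \{2,4,8\}$, and then compute the top Chern classes of the finitely many candidates via Ottaviani's description. The only difference is one of detail: the paper cites \cite[Rmk.~2.5(4)]{bgs} for the spinor decomposition and delegates all Chern class computations to \cite[Rmk.~2.9]{o}, whereas you spell out the $n=2$ and $n=4$ cases explicitly and, like the paper, leave the value $c_8(\mathcal S')=1$ on $Q_8$ to Ottaviani's formulas.
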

\begin{proof}
Since every Ulrich bundle on $Q_n$ is direct sum of spinor bundles (see for example \cite[Rmk.~2.5(4)]{bgs}), that have rank $2^{\lfloor \frac{n-1}{2} \rfloor}$, we have that $n$ is a multiple of $2^{\lfloor \frac{n-1}{2} \rfloor}$, and it follows easily that $n \in \{2, 4, 8\}$. Using \cite[Rmk.~2.9]{o}, we obtain that the only cases are the ones listed. Case (i) of Theorem \ref{superficie} corresponds to $n=2$. 
\end{proof}

\begin{remark}
\label{cn=1}
Assume that $X \subset \P^N$ is not covered by lines. Let $\E$ be a $0$-regular (in particular Ulrich) rank $r$ vector bundle with $c_n(\E)=1$. Then $r=n$. Indeed, $n \le r$ since $c_n(\E) \ne 0$ and it follows from \cite[Rmk.~7.3]{lo}, that $c_n(\E) \ge \binom{r}{n}>1$ if $r > n$. 
\end{remark}

In the next two lemmas we prove some connectedness statements. We set $X \subset \P^N$ to be a smooth irreducible variety of dimension $n$, $\E$ a rank $r$ globally generated bundle on $X$ with $c_2(\E) \ne 0$, $\det \E = \O_X(D)$ and $Z$ a normal pure codimension $2$ degeneracy locus $D_{r-2}(\varphi)$, where $\varphi: \O_X^{\oplus (r-1)} \to \E$ is an injective morphism.

\begin{lemma}
\label{q-a}
If $\E$ is $(n-3)$-ample (equivalently, when $\E$ is Ulrich, if either $X \subset \P^N$ does not contain a linear space of dimension $n-2$, or $\E_{|M}$ does not have a trivial direct summand for every linear space $M \subset X$ of dimension $n-2$), then $Z$ is connected. 
\end{lemma}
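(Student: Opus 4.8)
The plan is to recognize $Z$ as an ordinary determinantal degeneracy locus and apply the $q$-ampleness connectivity theorem of Tu \cite{t} and Steffen \cite{ste}, after matching the numerics so that the relevant expected dimension is large enough. Set $E=\O_X^{\oplus(r-1)}$ and $F=\E$, of ranks $e=r-1$ and $f=r$, so that $\varphi:E\to F$ and $Z=D_{r-2}(\varphi)$ is the degeneracy locus $D_\rho(\varphi)$ with rank bound $\rho=r-2$. Its expected codimension is $(e-\rho)(f-\rho)=1\cdot 2=2$, in agreement with the hypothesis that $Z$ is of pure codimension $2$, so $\dim Z=n-2$; note also $Z\neq\emptyset$ since $c_2(\E)\neq 0$. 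Finally $\SHom(E,F)=\E^{\oplus(r-1)}$.

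The next step is to transfer the positivity hypothesis to $\SHom(E,F)$. Since $\E$ is $(n-3)$-ample and a finite direct sum of copies of a $q$-ample bundle is again $q$-ample, the bundle $\SHom(E,F)=\E^{\oplus(r-1)}$ is $(n-3)$-ample. I would then invoke the connectivity theorem for degeneracy loci under $q$-ampleness of $\SHom(E,F)$: $D_\rho(\varphi)$ is connected as soon as $\dim X>(e-\rho)(f-\rho)+q$, i.e. as soon as the expected dimension exceeds $q$. With $q=n-3$ this reads $n>2+(n-3)=n-1$, which always holds; equivalently $\dim Z=n-2=(n-3)+1=q+1$, so the threshold is met. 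As these connectivity statements hold for an arbitrary (not merely general) morphism, and here $Z$ is normal of the expected dimension $n-2$, I conclude that $Z$ is connected.

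It remains to justify the parenthetical reformulation in the Ulrich case. Here I would use a geometric description of $(n-3)$-ampleness for a globally generated bundle: the failure of $q$-ampleness of $\E$ is detected by the positive-dimensional fibres of the morphism attached to $\O_{\P(\E)}(1)$, equivalently by the existence of a subvariety $M\subset X$ of dimension $>q$ along which $\E_{|M}$ acquires a trivial direct summand. This is in the same spirit as the description $\B_+(\E)=\bigcup_L L$ of \cite[Thm.~2]{bu} used in Corollary \ref{singconn}, where on a line $L$ non-ampleness of $\E_{|L}$ is exactly the presence of an $\O_L$ summand. When $\E$ is Ulrich, such a witnessing $M$ must be a linear space, since the restriction of an Ulrich bundle to a positive-dimensional subvariety can split off a trivial summand only along linearly embedded loci; hence $(n-3)$-ampleness is equivalent to the absence of a linear space $M\subset X$ of dimension $n-2$ with $\E_{|M}$ having a trivial summand, that is, to the stated dichotomy.

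The main obstacle is this last equivalence rather than the connectivity itself: once the hypotheses are matched, Tu--Steffen yields the conclusion at once, but pinning down that the only obstruction to $(n-3)$-ampleness of an Ulrich $\E$ comes from linear $(n-2)$-spaces carrying a trivial summand of $\E_{|M}$ requires the precise characterization of $q$-ampleness by restriction to subvarieties together with the rigidity of Ulrich bundles on linear sections. By comparison, the transfer of $q$-ampleness to $\E^{\oplus(r-1)}$ and the numerical threshold $n>n-1$ are routine.
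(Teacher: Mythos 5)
Your argument is correct and is essentially the paper's: the paper also deduces connectedness by applying Tu's $q$-ampleness connectedness theorem \cite[Thm.~6.4(a)]{t} to $\SHom(\O_X^{\oplus(r-1)},\E)=\E^{\oplus(r-1)}$, with exactly your numerics ($\dim Z = n-2 = q+1$ for $q=n-3$). The parenthetical equivalence in the Ulrich case, which you only sketch and rightly flag as the delicate point, is not proved in the paper either --- it is precisely the statement of \cite[Thm.~1]{lr2}, which the paper simply cites.
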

\begin{proof}
The equivalence is the content of \cite[Thm.~1]{lr2}. Connectedness follows from \cite[Thm.~6.4(a)]{t}. 
\end{proof}

\begin{lemma}
\label{h1=0}
Assume that $n \ge 3$. Then $Z$ is connected if one of the following holds:
\begin{itemize}
\item [(i)] $c_1(\E)^3 \ne 0$ and $H^1(\E(-D))=0$.
\item [(ii)] $\E$ is Ulrich and $\Pic(X) \cong \Z A$, with $A$ ample such that $h^0(A) \ge 2$. 
\item [(iii)] $\E$ is Ulrich, $D-K_X-(n+1)H$ is ample and $r \le n-1$, where $H$ is very ample (for example when $\E$ is special and $4 \le r \le n-1, r$ even).
\end{itemize}
Moreover, if $H^1(\O_X)=0$ and $Z$ is connected, then $H^1(\E(-D))=0$.
\end{lemma}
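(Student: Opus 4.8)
The plan is to reduce, in all three cases, the connectedness of $Z$ to two cohomological vanishings. Since $\varphi$ is injective and $Z=D_{r-2}(\varphi)$ is of pure codimension $2$, the Eagon--Northcott complex gives the exact sequence $0\to\O_X^{\oplus(r-1)}\to\E\to\I_{Z/X}(D)\to 0$ exactly as in \eqref{succ}; twisting by $-D$ produces $0\to\O_X(-D)^{\oplus(r-1)}\to\E(-D)\to\I_{Z/X}\to 0$. As $Z$ is normal, $h^0(\O_Z)$ equals the number of its connected components, and from $0\to\I_{Z/X}\to\O_X\to\O_Z\to 0$ (with $H^0(\I_{Z/X})=0$) one sees that $H^1(\I_{Z/X})=0$ forces $h^0(\O_Z)=1$, i.e. $Z$ connected. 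Combining this with the twisted sequence, in each case it suffices to prove $H^1(\E(-D))=0$ and $H^2(\O_X(-D))=0$.

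Part (i) is then immediate: since $c_1(\E)^3\neq0$, Lemma \ref{c3no}(ii) with $t=3$ gives $H^j(\O_X(-D))=0$ for $0\le j\le 2$, in particular $H^2(\O_X(-D))=0$, while $H^1(\E(-D))=0$ is the hypothesis. For the final assertion I would reverse the reduction: if $H^1(\O_X)=0$ and $Z$ is connected, then $h^0(\O_Z)=1$, so $0\to\I_{Z/X}\to\O_X\to\O_Z\to 0$ gives $H^1(\I_{Z/X})=0$; moreover $c_2(\E)\neq0$ forces $c_1(\E)^2\neq0$ by Lemma \ref{c3no}(i), so Lemma \ref{c3no}(ii) with $t=2$ yields $H^1(\O_X(-D))=0$, and the twisted sequence squeezes $H^1(\E(-D))$ between $H^1(\O_X(-D))^{\oplus(r-1)}=0$ and $H^1(\I_{Z/X})=0$, whence $H^1(\E(-D))=0$.

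For (ii) I would reduce to (i). Writing $H=hA$ and $D=\det\E=mA$ as multiples of the ample generator $A$, with $m\ge1$, I get $c_1(\E)^3=m^3A^3\neq0$ because $A^3H^{n-3}=h^{n-3}A^n>0$, so $H^2(\O_X(-D))=0$ as before. The content is $H^1(\E(-mA))=0$. Being Ulrich, $\E$ satisfies $H^0(\E(-H))=H^1(\E(-2H))=0$, so Lemma \ref{van}(ii) gives $H^1(\E(-jA))=0$ for all $j\ge 2h$; the case $h=1,\ m=1$ is handled directly by $H^1(\E(-H))=0$, and otherwise I must check $m\ge 2h$. Lemma \ref{ulr}(x) gives $m=\tfrac r2(\kappa+(n+1)h)$ with $K_X=\kappa A$, and when $h\ge 2$ the pair $(X,A)$ is neither $(\P^n,\O_{\P^n}(1))$ nor a quadric, so the Kobayashi--Ochiai bound forces $\kappa\ge 1-n$, from which a direct estimate gives $m\ge 2h$.

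Part (iii) is where positivity is genuinely used, and is the main obstacle. I would set $E:=\E^*\otimes\O_X(D)$, of rank $r$, and observe that $E\cong\bigl(\E^*(K_X+(n+1)H)\bigr)\otimes\O_X(D-K_X-(n+1)H)$, the first factor being the Ulrich dual of $\E$ (Ulrich by Lemma \ref{ulr}(ix), hence globally generated and nef) and the second an ample line bundle; thus $E$ is ample. Serre duality gives $H^1(\E(-D))^\vee\cong H^{n-1}(\omega_X\otimes E)$, which vanishes by Le Potier's vanishing theorem exactly because $r\le n-1$. The slick point is that $\det E=\O_X((r-1)D)$, so $D$ is ample (the determinant of an ample bundle being ample), and then Kodaira vanishing gives $H^2(\O_X(-D))^\vee\cong H^{n-2}(\omega_X(D))=0$ since $n\ge 3$; both target vanishings hold and $Z$ is connected. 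Throughout, the real work is securing $H^1(\E(-D))=0$ with precisely the right twist --- free in (i), delivered in (iii) by the Le Potier/ampleness package (with the key identity $\det E=(r-1)D$), and in (ii) requiring the numerical estimate $m\ge 2h$ via Kobayashi--Ochiai --- whereas $H^2(\O_X(-D))=0$ is comparatively soft in each case.
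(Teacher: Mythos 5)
Your proposal is correct and follows essentially the same route as the paper's proof: the reduction of connectedness to the two vanishings $H^1(\E(-D))=0$ and $H^2(\O_X(-D))=0$ via the Eagon--Northcott sequence, part (i) and the final assertion verbatim, part (ii) by reducing to (i) through Lemma \ref{van}(ii) and the computation of $\det\E$ from Lemma \ref{ulr}(x), and part (iii) by Serre duality plus Le Potier--type vanishing applied to the ample bundle $\E^*(D)$ (the paper phrases this as a nef Ulrich dual twisted by an ample line bundle and cites \cite[Ex.~7.3.17]{la2}). Two small remarks. In (ii), your justification of $\kappa\ge 1-n$ is not valid as stated: $h\ge 2$ does not exclude $(X,A)\cong(\P^n,\O_{\P^n}(1))$ or a quadric, since $H=hA$ could be a Veronese-type re-embedding; what is actually excluded (because $c_2(\E)\ne 0$) is only $(X,\O_X(1))=(\P^n,\O_{\P^n}(1))$, i.e.\ the case $h=1$ on $\P^n$. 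This is harmless, however: the universal Kobayashi--Ochiai bound $\kappa\ge -(n+1)$ already gives $m=\tfrac r2(\kappa+(n+1)h)\ge \tfrac r2(n+1)(h-1)\ge 4(h-1)\ge 2h$ for $h\ge 2$, $n\ge 3$, $r\ge 2$, which is exactly the paper's inequality \eqref{ind}. In (iii), you obtain $H^2(\O_X(-D))=0$ by observing that $\det(\E^*(D))=(r-1)D$ is ample and applying Kodaira vanishing, whereas the paper notes $D^n>0$ and reduces to (i); both work, and yours is marginally more self-contained.
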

\begin{proof}
If (i) holds, we have that $H^2(\O_X(-D))=0$ by Lemma \ref{c3no}(ii), hence $H^1(\I_{Z/X})=0$ by the Eagon-Northcott resolution
\begin{equation}
\label{ea}
0 \to \O_X(-D)^{\oplus (r-1)} \to \E(-D) \to \I_{Z/X} \to 0.
\end{equation}
Therefore, the exact sequence
\begin{equation}
\label{ze}
0 \to \I_{Z/X} \to \O_X \to \O_Z \to 0
\end{equation}
shows that $h^0(\O_Z)=h^0(\O_X)=1$ and $Z$ is connected. To see (ii), note that $(X,\O_X(1)) \ne (\P^n,\O_{\P^n}(1))$, for otherwise $\E=\O_{\P^n}^{\oplus r}$ by Lemma \ref{ulr}(viii) and then $c_2(\E)=0$, a contradiction. Setting $K_X=-i_XA$, we deduce, as is well-known, that $i_X \le n$. Next, we can write $D=aA$ with $a>0$ by Lemma \ref{ulr}(iii). Hence $c_1(\E)^n>0$. Let $H=hA$, so that, if $h=1$, we have that $H^1(\E(-D))=H^1(\E(-a))=0$ by Lemma \ref{ulr}(vi). Now assume that $h \ge 2$, so that 
\begin{equation}
\label{ind}
\left(n+1-\frac{4}{r}\right)h \ge i_X
\end{equation}
holds, since $i_X \le n$ and $r \ge 2$, because $c_2(\E) \ne 0$. It follows from Lemma \ref{ulr}(x) that $a=\frac{r}{2}((n+1)h-i_X)$ and \eqref{ind} implies that $a=\frac{r}{2}((n+1)h-i_X) \ge 2h$. But then $H^1(\E(-D))=0$ by Lemma \ref{van}(ii). Therefore (i) holds and $Z$ is connected by (i). If (iii) holds, we have that $D=K_X+(n+1)H+A$ for an ample $A$, hence, as $K_X+(n+1)H$ is nef, $D^n>0$. Let $\F=\E^*(K_X+(n+1)H)$ be the dual Ulrich bundle, as in Lemma \ref{ulr}(ix). We have, by Serre's duality
$$h^1(\E(-D))=h^{n-1}(\omega_X \otimes \E^*(D))=h^{n-1}(\omega_X \otimes \F(D-K_X-(n+1)H))=0$$
by \cite[Ex.~7.3.17]{la2}. Thus, (iii) follows from (i). Finally, if $H^1(\O_X)=0$ and $Z$ is connected, \eqref{ze} shows that $H^1(\I_{Z/X})=0$. Since $c_2(\E) \ne 0$, we have that $H^1(\O_X(-D))=0$ by Lemma \ref{c3no}(ii) and \eqref{ea} gives $H^1(\E(-D))=0$.
\end{proof}
 
In the following standard examples we also have connectedness.

\begin{lemma}
\label{exa}
Let $B$ be a smooth irreducible curve and let $\F$ be a very ample rank $n \ge 3$ vector bundle on $B$. Let $\pi: X = \P(\F) \to B$ and $H \in |\O_{\P(\F)}(1)|$. Let $M$ be a line bundle on $B$ and let $\G$ be a vector bundle on $B$ such that $H^i(M)=H^i(\G)=0$ for every $i \ge 0$. Let $\E$ be a rank $r$ vector bundle that is an extension of type
$$0 \to \Omega_{X/B}(2H+\pi^*M) \to \E \to \pi^*(\G(\det \F)) \to 0.$$
Then $\E$ is an Ulrich bundle and $H^1(\E(-D))=0$, where $\det \E = \O_X(D)$. In particular any Ulrich subvariety associated to $\E$ is connected.
\end{lemma}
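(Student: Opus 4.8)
The plan is to read off the Ulrich property of $\E$ from the two ``elementary'' pieces of its defining extension, and then to feed the extra vanishing $H^1(\E(-D))=0$ into Lemma \ref{h1=0}(i). Write $\cK=\Omega_{X/B}(2H+\pi^*M)$ and $\cQ=\pi^*(\G(\det\F))$, so that the hypothesis is the sequence $0\to\cK\to\E\to\cQ\to0$. If I show that both $\cK$ and $\cQ$ are Ulrich, then twisting by $\O_X(-p)$ and taking the long exact sequence in cohomology gives $H^i(\E(-p))=0$ for all $i$ and all $1\le p\le n$, i.e. $\E$ is Ulrich; it is then globally generated by Lemma \ref{ulr}(i). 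For $\cQ$ this is immediate: with $b=\dim B=1$, the triple $(X,\O_X(1),\cQ)=(\P(\F),\O_{\P(\F)}(1),\pi^*(\G(\det\F)))$ is exactly a linear Ulrich triple in the sense of Definition \ref{not4}, whose cohomological requirement $H^j(\G\otimes S^k\F^*)=0$ for $0\le k\le b-1=0$ reduces to the assumption $H^j(\G)=0$ for all $j$; hence $\cQ$ is Ulrich by \cite[Lemma 4.1]{lo}.

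The heart of the matter is the Ulrich property of $\cK$, which I would obtain by a relative cohomology computation along $\pi$. For $1\le p\le n$ one has $\cK(-p)=\Omega_{X/B}\otimes\O_X(2-p)\otimes\pi^*M$, so by the projection formula and the Leray spectral sequence over the curve $B$ it suffices to control $R^b\pi_*\big(\Omega_{X/B}(2-p)\big)$. Since the fibres are $\P^{n-1}$ and $\Omega_{X/B}$ restricts to $\Omega_{\P^{n-1}}$, Bott's formula shows these higher direct images vanish for every $b$ as long as $2-p\ne0$ in the relevant range $2-n\le 2-p\le 1$; the only surviving case is $p=2$, where $R^1\pi_*\Omega_{X/B}\cong\O_B$ (as follows by pushing forward the relative Euler sequence $0\to\Omega_{X/B}\to\pi^*\F(-1)\to\O_X\to0$, using $n\ge3$). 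In that single case Leray gives $H^i(\cK(-2))=H^{i-1}(B,M)$, which vanishes precisely because of the hypothesis $H^\bullet(M)=0$. This interplay — that the sole obstruction to $\cK$ being Ulrich is the class in $R^1\pi_*\Omega_{X/B}=\O_B$, and that it is cancelled exactly by $H^\bullet(M)=0$ — is what I expect to be the main point of the proof.

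With $\E$ Ulrich in hand, I would next produce $H^1(\E(-D))=0$ and $c_1(\E)^3\ne0$, the two inputs of Lemma \ref{h1=0}(i). Using $\omega_{X/B}=\O_X(-n)\otimes\pi^*\det\F$ one computes $\det\E=\O_X(n-2)\otimes\pi^*N$ with $N=(\det\F)^{e+1}\otimes M^{n-1}\otimes\det\G$, where $e=\rk\G$. Twisting the defining sequence by $-D$ and pushing forward along $\pi$, the $\cQ$-term contributes nothing since $R^b\pi_*\O_X(-(n-2))=0$ for all $b$ (because $-(n-1)\le-(n-2)\le-1$), while the $\cK$-term again contributes only through $R^1\pi_*\Omega_{X/B}=\O_B$, which now arises solely when $n=4$; there $H^1(\cK(-D))=H^0(B,MN^{-1})$, and one checks $\deg(MN^{-1})<0$, so this too vanishes and $H^1(\E(-D))=0$ is squeezed to $0$. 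The degree check, like the positivity check below, rests on the values forced through Riemann--Roch by the hypotheses, namely $\deg M=q-1$ and $\deg\det\G=e(q-1)$ with $q=g(B)$, together with $d=\deg\F\ge n$ when $B=\P^1$ (very ampleness forces every summand to have degree $\ge1$) and $q-1\ge0$ otherwise.

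Finally, for $c_1(\E)^3\ne0$ I would compute on $X$: writing $h=[H]$ and $f$ for the fibre class, $D=(n-2)h+cf$ with $c=\deg N$, and $f^2=0$ gives $D^3H^{n-3}=(n-2)^2\big[(n+1+3e)d+3(n-1+e)(q-1)\big]$, which is strictly positive by the same numerical inputs. Hence $c_1(\E)^3\ne0$ by Lemma \ref{gg}(i), and in particular $c_1(\E)^2\ne0$ by Lemma \ref{c3no}(i), so that $c_2(\E)\ne0$ (Theorem \ref{c2=0}, equivalently Remark \ref{usv3}) and the Ulrich subvarieties associated to $\E$ are nonempty, normal and of pure codimension $2$. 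Lemma \ref{h1=0}(i) then applies and shows that any such $Z$ is connected. The only delicate points are the boundary value $n=4$ in $H^1(\E(-D))=0$ and the rational-base case $q=0$ in the positivity of $D^3H^{n-3}$; both are resolved by the Riemann--Roch degrees above, so beyond the relative-cohomology mechanism of the second paragraph I anticipate no genuine obstacle, only careful numerical bookkeeping.
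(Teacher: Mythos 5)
Your proposal is correct and follows essentially the same route as the paper: decompose the extension into $\Omega_{X/B}(2H+\pi^*M)$ and $\pi^*(\G(\det\F))$, deduce that $\E$ is Ulrich, compute $D=(n-2)H+\pi^*N$, check $H^1$ of each piece twisted by $-D$ (with $n=4$ as the only delicate case, settled by the same Riemann--Roch degree estimate $\deg(M-N)<0$), and conclude via Lemma \ref{h1=0}(i). The only differences are cosmetic: you verify the Ulrichness of $\Omega_{X/B}(2H+\pi^*M)$ and the positivity of $c_1(\E)^3H^{n-3}$ by a direct Bott/Leray computation where the paper cites \cite[Lemma 4.1]{lm}, and you use $R^q\pi_*$ of the relative cotangent bundle directly where the paper manipulates the relative Euler sequence.
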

\begin{proof} 
Note that $\Omega_{X/B}(2H+\pi^*M)$ is Ulrich by \cite[Lemma 4.1]{lm} and $\pi^*(\G(\det \F))$ is Ulrich by \cite[Lemma 4.1]{lo}. Therefore also $\E$ is Ulrich. Also, $c_1(\E)^n>0$ by \cite[Lemma 4.1]{lm} and Lemma \ref{ulr}(iii). Now, the last assertion follows from the first and Lemma \ref{h1=0}(i). Next, we prove that $H^1(\E(-D))=0$. Note that $r \ge n-1$. We have that $D=(n-2)H+\pi^*N, \ \hbox{where} \ N=(n-1)M+\det \G +(r-n+2) \det \F$. Now
$$H^1((\pi^*(\G(\det \F)))(-D))=H^1((\pi^*(\G(\det \F-N)))(2-n)H))=0$$
since $R^j \pi_*(\O_{\P(\F)}(2-n))=0$ for every $j \ge 0$. Also, we have
$$\Omega_{X/B}(2H+\pi^*M)(-D))=\Omega_{X/B}((4-n)H+\pi^*(M-N))$$
and it remains to show that
\begin{equation}
\label{h1}
H^1(\Omega_{X/B}((4-n)H+\pi^*(M-N)))=0.
\end{equation}
If $n=3$ we have $R^j \pi_*(\Omega_{X/B}(H+\pi^*(M-N)))=0$ for every $j \ge 0$ by \cite[Lemma 7.3.11(i), (iii)]{la2}, hence \eqref{h1} follows. If $n \ge 4$ we use the exact sequence
\begin{equation}
\label{suc}
0 \to \Omega_{X/B}((4-n)H+\pi^*(M-N)) \to (\pi^* \F)((3-n)H+\pi^*(M-N)) \to (4-n)H+\pi^*(M-N) \to 0.
\end{equation}
Since $R^j \pi_*((\pi^* \F)((3-n)H+\pi^*(M-N)))=0$ for every $j \ge 0$, we get that 
$$H^1((\pi^* \F)((3-n)H+\pi^*(M-N)))=0.$$ 
Hence, using \eqref{suc}, to see \eqref{h1} and conclude the proof, it remains to show that 
\begin{equation}
\label{h0}
H^0((4-n)H+\pi^*(M-N))=0.
\end{equation}
If $n \ge 5$ we have that $H^0(-H+\pi^*(M-N))=H^0((\pi_*(-H))(M-N))=0$, that is \eqref{h0} holds.
If $n=4$, to see that \eqref{h0}, we will show that $\deg(M-N)<0$. Now
$$M-N=-2M-\det(\G)-(r-2)\det \F.$$
If $g$ is the genus of $B$, we have from $H^i(\G)=0$ for every $i \ge 0$ that $\deg \G=(r-3)(g-1)$, hence
$$\deg(M-N)=-(r-1)(g-1)-(r-2)\deg \F.$$
Since $r \ge 3$ and $\deg \F>0$, we see that $\deg(M-N)<0$ if $g \ge 1$. On the other hand, if $g=0$ we have, as $\F$ is very ample rank $4$ on $\P^1$, that $\deg \F \ge 4$, hence $\deg(M-N)=r-1-(r-2)\deg \F \le -3r+7<0$. This proves \eqref{h0} and the lemma.
\end{proof}

On the other hand, many times, Ulrich subvarieties will be disconnected. 
\begin{remark}
\label{718}
Note that, when $c_2(\E) \ne 0, c_1(\E)^3=0$ and $n \ge 4$, then all Ulrich subvarieties are disconnected. In fact, we can repeat the proof of \cite[Cor.~4]{ls} using now the fact that $c_1(\E)^3=0$ and that $3 \le \lfloor \frac{n}{2}+1 \rfloor$. It follows from that proof that $(X,\O_X(1),\E)$ is a linear Ulrich triple over a surface, because $c_2(\E) \ne 0$. Then Lemma \ref{lut2} below shows that all Ulrich subvarieties are disconnected. 
\end{remark}

\begin{lemma}
\label{lut2}
Let $(X, \O_X(1), \E)$ be a linear Ulrich triple of dimension $n \ge 3$ over a smooth surface $B$.  Then the Ulrich subvarieties associated to $\E$ are connected if and only if $(X, \O_X(1), \E) \cong (\P^1\times\P^2, \O_{\P^1}(1) \boxtimes \O_{\P^2}(1), q^*(\O_{\P^2}(1)^{\oplus 2})),$ where $q : \P^1\times\P^2 \to \P^2$ is the second projection.
\end{lemma}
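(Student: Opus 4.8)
The plan is to exploit the projective bundle structure of a linear Ulrich triple and to reduce the connectedness of the Ulrich subvarieties to the single numerical condition $c_2(\E_B)=1$ on the base surface. By Definition \ref{not4} we may write $(X,\O_X(1),\E)=(\P(\F),\O_{\P(\F)}(1),\pi^*(\G(\det\F)))$, where $\pi:X\to B$ is the bundle map onto the smooth surface $B$, $\F$ is a very ample bundle of rank $n-1\ge 2$, and $\G$ satisfies $H^j(\G\otimes S^k\F^*)=0$ for all $j\ge 0$ and $0\le k\le 1$, i.e. $H^j(\G)=H^j(\G\otimes\F^*)=0$ for all $j$. Set $\E_B=\G(\det\F)$, so that $\E=\pi^*\E_B$. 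Since both directions of the asserted equivalence will be shown to pass through $c_2(\E_B)=1$, the proof splits into a clean reduction and a delicate classification.

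For the reduction I would argue as follows. The map $\pi$ is flat with $\pi_*\O_X\cong\O_B$, and $\E=\pi^*\E_B$ is globally generated by Lemma \ref{ulr}(i). Hence Lemma \ref{usub} applies with $k=2$: a general Ulrich subvariety $Z=D_{r-2}(\varphi)$ equals the scheme-theoretic inverse image $\pi^{-1}(Z_B)$, where $Z_B=D_{r-2}(\varphi_B)$ is reduced of pure codimension $2$ in the surface $B$, hence a reduced finite set of $\deg Z_B=c_2(\E_B)$ points. Because $\O_Z\cong\pi^*\O_{Z_B}$ and the fibres of $\pi$ are connected, one gets $h^0(\O_Z)=h^0(\O_{Z_B})=c_2(\E_B)$, so $Z$ has exactly $c_2(\E_B)$ connected components; by Proposition \ref{conn} the same count holds for every Ulrich subvariety. (This is consistent with Remark \ref{numcc2}, where over $\P^2\times\P^2$ one finds $c_2(\O_{\P^2}(2)^{\oplus r})=2r(r-1)$ components.) Therefore the Ulrich subvarieties are connected if and only if $c_2(\E_B)=1$.

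It then remains to classify the triples with $c_2(\E_B)=1$. Here I would first reduce the rank: since $H^0(\E_B^*)=H^0(\E^*)=0$ (Lemma \ref{ulr}(iv), as $c_2(\E)\neq 0$) and $\E_B$ is globally generated, if $\rk\E_B\ge 3$ a general section is nowhere vanishing (its zero locus would have codimension $\ge 3>2$), giving $0\to\O_B\to\E_B\to\E_B'\to 0$ with $\E_B'$ globally generated of rank one less, the same Chern classes, and $H^0((\E_B')^*)=0$; iterating lands at rank $2$. A rank $2$ globally generated bundle with $c_2=1$ has a general section vanishing at a single reduced point $p$, so $0\to\O_B\to\E_B\to\I_{p/B}(\det\E_B)\to 0$ with $\I_{p/B}(\det\E_B)$ globally generated. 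The goal is to force $B=\P^2$, $\F=\O_{\P^2}(1)^{\oplus 2}$ (hence $n=3$), $\G=\O_{\P^2}(-1)^{\oplus 2}$, i.e. $\E_B=\O_{\P^2}(1)^{\oplus 2}$, so that $X=\P(\F)=\P^1\times\P^2$ with $\O_X(1)=\O_{\P^1}(1)\boxtimes\O_{\P^2}(1)$ and $\E=q^*(\O_{\P^2}(1)^{\oplus 2})$.

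The main obstacle is exactly this last classification, and the delicate point is that the condition $c_2(\E_B)=1$ alone does \emph{not} single out $\P^2$: for instance $(\P^1\times\P^1,\O(1,0)\oplus\O(0,1))$ is globally generated of rank $2$ with $c_2=1$. What rules such cases out is the full $\F^*$-twisted vanishing $H^j(\G\otimes\F^*)=0$ for \emph{all} $j$ (the condition $\chi(\G)=0$ is insufficient, as the $H^2$-term fails), combined with the positivity of $\det\F$ coming from very ampleness of $\F$; this is also what excludes the other candidate on $\P^2$, namely $T_{\P^2}(-1)$, for which $\chi(\G)\neq 0$. I expect the classification to proceed by feeding the Hartshorne--Serre description of the rank $2$ reduction into the cohomological constraints of Definition \ref{not4} and the structural results on linear Ulrich triples of \cite{ls,lms}, with the extra subtlety that the rank reduction no longer respects the form $\G(\det\F)$, so one must either transport the triple through the reduction or run the numerical classification directly on $\E_B=\G(\det\F)$. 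The converse direction is then a one-line check: for the triple in the statement $c_2(\E_B)=c_2(\O_{\P^2}(1)^{\oplus 2})=1$, so the Ulrich subvarieties are connected.
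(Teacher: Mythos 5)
Your reduction is sound and matches the paper's strategy: both you and the authors use Lemma \ref{usub} to identify a general Ulrich subvariety $Z$ with $\pi^{-1}(Z_B)$, so that the number of connected components of $Z$ is the number of points of the reduced zero-dimensional scheme $Z_B$, namely $c_2(\E_B)$, and Proposition \ref{conn} propagates this count to all Ulrich subvarieties. (Two small caveats: you should first rule out $c_2(\E_B)=0$, i.e.\ that the triple is not \emph{also} a linear Ulrich triple over a curve -- the paper devotes the first half of its converse argument to exactly this, since otherwise $Z=\emptyset$ and Lemma \ref{usub} does not even apply; and the cleanest way to force $r=2$ is not your section-by-section rank reduction, which as you note destroys the form $\G(\det\F)$, but simply Corollary \ref{singconn}: since $c_3(\E)=\pi^*c_3(\E_B)=0$, connectedness with $r\ge 3$ is impossible.)

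The genuine gap is the classification of the rank-two case with $c_2(\E_B)=1$, which you explicitly leave as an expectation (``I expect the classification to proceed by\dots''). This is the heart of the converse direction, and your concern that $c_2(\E_B)=1$ alone does not pin down $B=\P^2$ is well founded; but the resolution does not come from the twisted vanishings of Definition \ref{not4} or from Hartshorne--Serre. The paper's mechanism is different and uses the Ulrich condition on $X$ globally: Lemma \ref{zeta1} produces a smooth irreducible curve $C\in|\det\E_B|$ through the point $P=Z_B$ with $\O_C(P)$ globally generated, hence $C\cong\P^1$ and $H^1(\O_B)=0$; the count $h^0(\E)=rd=2d$ from Lemma \ref{ulr}(vii), fed through the two exact sequences $0\to\O_B\to\E_B\to\I_{P/B}(C)\to 0$ and $0\to\O_B\to\I_{P/B}(C)\to\O_{\P^1}(C^2-1)\to 0$, yields $c_1(\E_B)^2=C^2=2d-2$; semistability of Ulrich bundles (\cite[Thm.~2.9]{ch}) and Bogomolov's inequality then give $4c_2(\E_B)-c_1(\E_B)^2=6-2d\ge 0$, so $d\le 3$, and $\rho(X)\ge 2$ forces $d=3$ and $(X,\O_X(1))\cong(\P^1\times\P^2,\O_{\P^1}(1)\boxtimes\O_{\P^2}(1))$, after which $c_1(\E)^n=0$ and \cite[Cor.~4.9]{ls} identify $\E=q^*(\O_{\P^2}(1)^{\oplus 2})$. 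None of these steps appears in your proposal, so as written the converse direction is not proved.
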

\begin{proof}
Consider the case $(X, \O_X(1), \E) \cong (\P^1 \times \P^2, \O_{\P^1}(1) \boxtimes \O_{\P^2}(1), q^*(\O_{\P^2}(1)^{\oplus 2}))$, that is a linear Ulrich triple over $\P^2$ by \cite[Lemma 4.1]{lo} and $c_2(q^*(\O_{\P^2}(1)^{\oplus 2}))=q^*H_{\P^2}^2 \ne 0$. Hence, for a general Ulrich subvariety $Z$ associated to $\E=q^*(\O_{\P^2}(1)^{\oplus 2})$, we have that $Z \ne \emptyset$ by Proposition \ref{c2=0}. Therefore, Lemma \ref{usub} gives that $\O_Z \cong \pi^*\O_{Z_{\P^2}}$, where $Z_{\P^2}=D_{r-2}(\varphi_{\P^2})$ is $0$-dimensional. But then $[Z_{\P^2}]=c_2(\O_{\P^2}(1)^{\oplus 2})=H_{\P^2}^2$ and we get that $Z_{\P^2}=\{P\}$ for some point $P \in \P^2$, hence $Z$ is connected and so are all Ulrich subvarieties by Proposition \ref{conn}.

Vice versa, suppose that $(X, \O_X(1), \E)$ is a linear Ulrich triple of dimension $n \ge 3$ over a surface $B$ such that all Ulrich subvarieties associated to $\E=\pi^*(\G(\det \F))$ are connected, where $\pi : X \cong \P(\F) \to B$. We first prove that $(X, \O_X(1), \E)$ is not a linear Ulrich triple over a smooth curve $B_1$. In fact, assume that $(X, \O_X(1), \E) \cong (\P(\F_1), \O_{\P(\F_1)}(1), \pi_1^*(\G_1(\det \F_1))$, where $\pi_1 : X \cong \P(\F_1) \to B_1$. Now, for any fiber $F_1$ of $\pi_1$, we have a morphism $\pi_{|F_1} : \P^{n-1} \cong F_1 \to B$ that is not constant, since the fibers of $\pi$ are $(n-2)$-dimensional. Hence $\pi_{|F_1}$ is finite-to-one onto its image and we get that $n-1 \le \dim B=2$, so that $n=3$. Hence the fibers $F$ of $\pi$ are lines and the fibers $F_1$ of $\pi_1$ are planes in $\P^N=\P H^0(\O_X(1))$. Moreover it cannot be that $F \subset F_1$, for otherwise $\pi_{|F_1}$ would contract $F$. Therefore $F \cap F_1$ is a point and $\pi_{|F_1}$ is an isomorphism. Hence $B \cong \P^2$ and therefore $h^1(\O_{B_1})=h^1(\O_X)=h^1(\O_{\P^2})=0$, so that $B_1 \cong \P^1$. Then it follows from \cite[Thm.~A and Rmk.~1.6]{sa} that $(X, \O_X(1)) \cong (\P^1\times\P^2, \O_{\P^1}(1) \boxtimes \O_{\P^2}(1))$ and $\pi_1$ is the first projection, $\pi$ the second. Thus, we get that $\F \cong \O_{\P^2}(1)^{\oplus 2}$ and $\F_1 \cong \O_{\P^1}(1)^{\oplus 3}$. Then \cite[Lemma 4.1]{lo} implies that $\G \cong \O_{\P^2}(-1)^{\oplus r}$ and $\G_1 \cong \O_{\P^1}(-1)^{\oplus r}$. But then $\pi_1^*(\O_{\P^1}(2)^{\oplus r}) \cong \E \cong \pi^*(\O_{\P^2}(1)^{\oplus r})$, giving the contradiction
$$0=(\pi_1^*(\O_{\P^1}(2r))^2 \cong c_1(\E)^2 = \pi^*(\O_{\P^2}(r))^2=r^2.$$
This proves that $(X, \O_X(1), \E)$ is not a linear Ulrich triple over a smooth curve.

By Proposition \ref{c2=0} we get that $c_2(\E) \ne 0$ and, since $c_3(\E)=\pi^*c_3((\G(\det\F)))=0$, we find by Corollary \ref{singconn} that $\E$ and then $\G$ have rank two. Let $Z$ be an Ulrich subvariety arising from a general section of $\E$. It follows from Lemma \ref{usub} that $\O_Z \cong \pi^*\O_{Z_B}$, where $Z_B$ is the zero locus of a general section of $\G(\det\F)$. Since $\G(\det\F)$ is globally generated (see for example \cite[Exc.~5.1.29(b)]{liu}) we have that $Z_B$ is smooth and connected (since $Z$ is) and therefore $Z_B=\{P\}$ for some point $P \in B$. By Lemma \ref{zeta1}, there is a smooth irreducible curve $C \in |\det(\G(\det\F))|$ such that $P \in C$ and $\O_C(P)$ is globally generated, so that $C \cong \P^1$. Moreover, $H^1(\O_B(-C)) \cong H^1(\O_X(-\det \E))=0$ by Lemma \ref{c3no}(ii), hence the exact sequence
$$0 \to \O_B(-C) \to \O_B \to \O_C \to 0$$
shows that $H^1(\O_B) = 0$. Now, the exact sequence
$$0 \to \O_B \to \G(\det\F) \to \I_{\{P\}/B}(C) \to 0$$
gives, using Lemma \ref{ulr}(vii), 
$$h^0(\I_{\{P\}/B})=h^0(\G(\det\F))-1=h^0(\E)-1=2d-1.$$ 
Then, the exact sequence  
$$0 \to \O_B \to \I_{\{P\}/B}(C) \to \O_{\P^1}(C^2-1) \to 0$$
shows, since $C^2 \ge 0$, that 
$$c_1(\G(\det\F))^2=C^2=h^0(\O_{\P^1}(C^2-1))=h^0(\I_{\{P\}/B})-1=2d-2.$$ 
On the other hand, since $\E$ is semistable by \cite[Thm.~2.9]{ch}, it satisfies Bogomolov's inequality 
$0 \le (4c_2(\E)-c_1(\E)^2)H^{n-2}$, that is
$$0 \le \pi^*(4c_2(\G(\det\F))-c_1(\G(\det\F))^2)H^{n-2}=4c_2(\G(\det\F))-c_1(\G(\det\F))^2=6-2d$$ 
that is $d \le 3$. Since $\rho(X) \ge 2$ we deduce that $d=3$ and $(X, \O_X(1)) \cong (\P^1\times\P^2, \O_{\P^1}(1) \boxtimes \O_{\P^2}(1))$ (see for example \cite[Thm.~3.1]{h2}). Also, $c_1(\E)^n=\pi^*c_1(\G(\det\F))^n=0$ and we find that $\E=q^*(\O_{\P^2}(1)^{\oplus 2})$ by \cite[Cor.~4.9]{ls}, where $q$ is the second projection. This concludes the proof.
\end{proof}

\section{Surfaces}

In the case of surfaces, Ulrich subvarieties are $0$-dimensional and smooth, hence we need to understand when they can be a point. 

Before giving a series of examples, we recall the following fact.

\begin{remark}
\label{cub1}
Let $\Gamma \subset \P^3$ be a smooth cubic surface. Then there are $72$ classes of twisted cubics on $\Gamma$, listed in \cite[Ex.~3.5]{ch}. Note that, for any two twisted cubics $T, T'$ with $T \not\sim T'$, we have that $T \cdot T' \ge 2$.
In fact, assume that $m:=T \cdot T' \le 1$. Since $h^0(\O_{\Gamma}(T))=3$, the exact sequence
$$0 \to \O_{\Gamma}(T'-T) \to  \O_{\Gamma}(T') \to \O_{\P^1}(m) \to 0$$
shows that $h^0(\O_{\Gamma}(T'-T)) \ge 3-h^0(\O_{\P^1}(m)) \ge 1$. Hence $T'-T$ is effective and $H \cdot (T'-T)=0$, implying that $T \sim T'$.
\end{remark}

\begin{example}
\label{cub}
Let $\Gamma \subset \P^3$ be a smooth cubic surface. All rank $2$ Ulrich bundles $\E$ on $\Gamma$ with $c_2(\E)=1$ are of type $\E=\O_{\Gamma}(T)^{\oplus 2}$, where $T \subset \Gamma$ is a twisted cubic (as listed in  \cite[Ex.~3.5]{ch}). 

Indeed, it follows from \cite[Ex.~3.6]{ch} that $\E$ is an extension of type
$$0 \to \O_{\Gamma}(T) \to \E \to \O_{\Gamma}(T') \to 0$$
where $T$ and $T'$ are two twisted cubic curves contained in $\Gamma$. Then $1=c_2(\E)=T \cdot T'$ and Remark \ref{cub1} implies that $T \sim T'$. On the other hand $\Ext^1(\O_{\Gamma}(T'),\O_{\Gamma}(T))=H^1(\O_{\Gamma}(T-T'))=H^1(\O_{\Gamma})=0$ and therefore the above sequence splits, hence $\E=\O_{\Gamma}(T)^{\oplus 2}$.
\end{example}

Next, we recall the notation for Hirzebruch surfaces $X_e = \P(\O_{\P^1} \oplus \O_{\P^1}(-e))$ with $e \ge 0$, $f$ a ruling and $C_0$ an irreducible curve with $C_0^2=-e, C_0 \cdot f=1$. In particular, a smooth non-degenerate cubic $S \subset \P^4$ is isomorphic to the Hirzebruch surface $X_1$ embedded with $H=C_0+2f$.

\begin{example}
\label{cusc}
Let $\Sigma \subset \P^4$ be a smooth non-degenerate cubic surface. Then $\E = \O_{\Sigma}(C_0+f)^{\oplus 2}$ is a rank $2$ Ulrich bundle with $c_2(\E)=1$. 

In fact, as said above, we have that $(\Sigma,H) \cong (X_1,C_0+2f)$. Now, $H^0(\O_{X_1}(C_0+f-H))=H^0(\O_{X_1}(-f))=0$ and, by Serre duality, $H^2(\O_{X_1}(C_0+f-2H))=H^0(\O_{X_1}(-C_0))^*=0$. Therefore $H^0(\E(-H))=H^2(\E(-2H))=0$. Moreover $c_1(\E)=2C_0+2f$ and $c_2(\E)=(C_0+f)^2=1$ satisfy \cite[(2.2)]{ca}. Hence $\E$ is Ulrich by \cite[Prop.~2.2(4)]{ca}. 
\end{example}

As it turns out, the above examples are the only ones with connected Ulrich subvarieties, as we will see in the proof of Theorem \ref{superficie}. Before proving the theorem, we need two lemmas.

\begin{lemma} 
\label{sup2-bis}
Let $S \subset \P^N$ be a smooth irreducible surface of degree $d \ge 2$ and let $\E$ be a rank $r \ge 2$ Ulrich bundle on $S$. Let $Z$ be a nonempty Ulrich subvariety associated to $\E$. Then $Z$ is connected if and only if $c_2(\E)=1$. In the latter case, we have that $r=2, c_1(\E)^2=2d-2$ and $2 \le d \le 3$.
\end{lemma}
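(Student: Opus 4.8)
The statement splits into the equivalence ``$Z$ connected $\iff c_2(\E)=1$'' and the structural consequences when $c_2(\E)=1$. For the equivalence I would first record that, since $\dim S=2$ and $Z$ has pure codimension $2$, $Z$ is $0$-dimensional; being an Ulrich subvariety on a surface it is smooth by Definition \ref{usv}(b) (as $n\le 5$), hence reduced, so it is a finite set of $\deg Z$ distinct points. By Lemma \ref{gg} its class is $c_2(\E)$, so $\deg Z=c_2(\E)$, with $Z\ne\emptyset$ forcing $c_2(\E)\ne 0$. A reduced $0$-dimensional scheme is connected precisely when it is a single point, i.e. when $c_2(\E)=1$; this settles the first assertion (consistently with the constancy of the number of components from Proposition \ref{conn}).

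Now assume $c_2(\E)=1$, so $Z=\{P\}$ is a single reduced point. The plan is to extract three numerical facts and feed them into Bogomolov's inequality. First, since $\E$ is globally generated (Lemma \ref{ulr}(i)) of rank $r$ with $c_2(\E)=c_n(\E)=1$, Remark \ref{moretti} gives that $S$ is rational, whence $\chi(\O_S)=1$. Second, I apply Lemma \ref{zeta1}: it yields a smooth irreducible curve $Y\in|\det\E|$ containing $Z$, together with the sequence \eqref{incl2}, in which $\O_Y(Z)=\O_Y(P)$ is globally generated of degree $1$; a globally generated degree-$1$ line bundle on a smooth curve has $h^0\ge 2$, hence genus $0$ by Clifford, so $Y\cong\P^1$. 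Writing $D=\det\E$, adjunction on $S$ applied to $Y$ then gives $D^2+D\cdot K_S=2g(Y)-2=-2$.

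Next I would combine Riemann--Roch with the Ulrich normalization. Since $\E$ is Ulrich we have $\chi(\E)=h^0(\E)=rd$ (Lemma \ref{ulr}(vii) together with $0$-regularity), while Riemann--Roch on the surface reads $\chi(\E)=r\chi(\O_S)+\tfrac12 D(D-K_S)-c_2(\E)$. Substituting $\chi(\O_S)=1$, $c_2(\E)=1$ and $D\cdot K_S=-2-D^2$ collapses this to $c_1(\E)^2=D^2=r(d-1)$. Finally, Ulrich bundles are $\mu$-semistable, so Bogomolov's inequality $2rc_2(\E)-(r-1)c_1(\E)^2\ge 0$ applies (exactly as in the rank-$2$ use in Lemma \ref{lut2}); inserting $c_2(\E)=1$ and $c_1(\E)^2=r(d-1)$ gives $(r-1)(d-1)\le 2$. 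With $r\ge 2$ and $d\ge 2$ this leaves only $(r,d)\in\{(2,2),(2,3),(3,2)\}$.

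It remains to rule out $(r,d)=(3,2)$, which I expect to be the main obstacle, being the only step requiring a genuine geometric input rather than bookkeeping. Here $d=2$ forces $S$ to be a smooth quadric surface $Q_2\cong\P^1\times\P^1$ (the unique smooth surface of degree $2$), and every Ulrich bundle on a quadric is a direct sum of spinor bundles, which on $Q_2$ are the line bundles $\O(1,0)$ and $\O(0,1)$. For $\E=\O(1,0)^{\oplus a}\oplus\O(0,1)^{\oplus b}$ with $a+b=3$ one computes $c_1(\E)^2=2ab$, so $c_1(\E)^2=r(d-1)=3$ would force $2ab=3$, which is impossible. Hence $r=2$, and then $(r-1)(d-1)\le 2$ reads $d\le 3$, giving $2\le d\le 3$ and $c_1(\E)^2=r(d-1)=2d-2$, which completes the proof.
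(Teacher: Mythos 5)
Your proof is correct, and while the first half (connected $\iff$ a single point $\iff c_2(\E)=1$) and the overall skeleton of the second half (Lemma \ref{zeta1} producing $Y\cong\P^1$, then Bogomolov) coincide with the paper's argument, you reach $r=2$ and $c_1(\E)^2=2d-2$ by a genuinely different route. The paper gets $r=2$ immediately from Theorem \ref{connesse}: on a surface $c_3(\E)=0$ automatically and $s=h^0(\E^*)=0$, so a connected $Z$ forces $r-1\le 1$; it then computes $c_1(\E)^2=2d-2$ by a direct $h^0$-count through the sequences $0\to\O_S\to\E\to\I_{\{P\}/S}(C)\to 0$ and $0\to\O_S\to\I_{\{P\}/S}(C)\to\O_{\P^1}(C^2-1)\to 0$, needing only $H^1(\O_S)=0$, which it extracts from Kawamata--Viehweg applied to $-C$ (using $C^2=c_1(\E)^2>0$, which follows from Lemma \ref{c3no}(i)). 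You instead keep $r$ arbitrary, prove $c_1(\E)^2=r(d-1)$ via Riemann--Roch plus adjunction on $Y$, which forces you to import $\chi(\O_S)=1$ from the rationality of $S$ (Remark \ref{moretti}, hence Moretti's theorem), and then use the rank-$r$ Bogomolov inequality to reduce to $(r,d)\in\{(2,2),(2,3),(3,2)\}$, excluding $(3,2)$ by the classification of Ulrich bundles on the quadric (or, even more simply, by the evenness of the intersection form on $\P^1\times\P^1$, which makes $c_1(\E)^2=3$ impossible). Both arguments are valid; the paper's is more self-contained, avoiding Moretti's rationality result and the quadric classification, while yours shows the numerology $c_1(\E)^2=r(d-1)$ holds uniformly in $r$ and isolates the use of the rank bound in a single excluded case. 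Two small points worth making explicit: you should say that you pass to a general Ulrich subvariety before invoking Lemma \ref{zeta1} (harmless, since the conclusions concern only $(S,\E)$ and Proposition \ref{conn} fixes the number of components), and the identity $\chi(\E)=h^0(\E)=rd$ deserves the one-line justification that Ulrich bundles are $0$-regular, hence have no higher cohomology.
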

\begin{proof}
Since $\dim Z =0$, $Z$ is smooth by definition and $[Z]=c_2(\E)$, we have that $Z$ is connected if and only if $Z=\{P\}$ is a point, that is, if and only if $c_2(\E)=1$. For the rest of the proof we assume that $c_2(\E)=1$. Let $V \subset H^0(\E)$ be a general subspace of dimension $r-1$ and let $\varphi : V \otimes \O_S \to \E$. It follows from Remark \ref{usv} that $Z=D_{r-2}(\varphi)$ is an Ulrich subvariety associated to $\E$. In particular, $Z \ne \emptyset$ by Proposition \ref{c2=0} and since $[Z]=c_2(\E)$ and $Z$ is smooth, we have that $Z=\{P\}$ for some point $P \in S$. Now Theorem \ref{connesse} implies that $r=2$ and Lemma \ref{zeta1} gives a smooth irreducible curve $C \in |\det \E|$ with $P \in C$ and $\O_C(P)$ globally generated, so that $C \cong \P^1$. Note that $C^2=c_1(\E)^2 > 0$, for otherwise we have the contradiction $c_2(\E)=0$ by Lemma \ref{c3no}(i). Therefore $H^1(\O_S(-C))=0$ by Kawamata-Viehweg's vanishing and the exact sequence
$$0 \to \O_S(-C) \to \O_S \to \O_C \to 0$$
implies that $H^1(\O_S)=0$. From the exact sequence
$$0 \to \O_S \to \E \to \I_{\{P\}/S}(C) \to 0$$
we deduce, using Lemma \ref{ulr}(vii), that $h^0(\I_{\{P\}/S}(C))=h^0(\E)-1=2d-1$. Therefore the exact sequence
$$0 \to \O_S \to \I_{\{P\}/S}(C) \to \O_{\P^1}(C^2-1) \to 0$$
gives that 
$$c_1(\E)^2=C^2=h^0(\O_{\P^1}(C^2-1))=h^0(\I_{\{P\}/S}(C))-1=2d-2.$$
On the other hand, since $\E$ is semistable by \cite[Thm.~2.9]{ch}, it satisfies Bogomolov's inequality $4c_2(\E)-c_1(\E)^2 \ge 0$, that is $0 < 2d-2=c_1(\E)^2 \le 4$, so that $2 \le d \le 3$.
\end{proof}

\begin{lemma} 
\label{hir1-bis}
Let $(S,H)=(X_1,C_0+2f)$ be a Hirzebruch surface. Then, the only  rank $2$ Ulrich bundle $\E$ with $c_2(\E)=1$ is $\E = \O_{X_1}(C_0+f)^{\oplus 2}$. 
\end{lemma}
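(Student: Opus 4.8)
The plan is to first pin down the Chern classes of $\E$ and then to show that a suitable twist of $\E$ is trivial. Since $(S,H)=(X_1,C_0+2f)$ is a cubic surface, we have $d=H^2=(C_0+2f)^2=3$, so Lemma \ref{sup2-bis} already yields $c_1(\E)^2=2d-2=4$. On the other hand $K_S=-2C_0-3f$, so Lemma \ref{ulr}(x) gives $c_1(\E)\cdot H=(K_S+3H)\cdot H=(C_0+3f)\cdot(C_0+2f)=4$. Writing $c_1(\E)=aC_0+bf$ and using $C_0^2=-1,\ C_0\cdot f=1,\ f^2=0$, these two equalities read $a+b=4$ and $-a^2+2ab=4$, i.e. $(a-2)(3a-2)=0$; integrality forces $a=b=2$, whence $c_1(\E)=2(C_0+f)$.

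Next I would set $\L=\O_{X_1}(C_0+f)$ and $\E'=\E\otimes\L^{-1}$. The usual rank $2$ twisting formulas give $c_1(\E')=0$ and $c_2(\E')=c_2(\E)-c_1(\E)\cdot(C_0+f)+(C_0+f)^2=1-2+1=0$, so it suffices to prove $\E'\cong\O_{X_1}^{\oplus 2}$, which gives $\E\cong\O_{X_1}(C_0+f)^{\oplus 2}$, the bundle of Example \ref{cusc}. Since $X_1$ is rational we have $\chi(\O_{X_1})=1$, and Riemann--Roch yields $\chi(\E')=2\chi(\O_{X_1})+\tfrac12 c_1(\E')(c_1(\E')-K_S)-c_2(\E')=2$. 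As $\E$ is semistable by \cite[Thm.~2.9]{ch}, so is $\E'$, and $c_1(\E')=0$ gives $\E'^*\cong\E'$; hence $h^2(\E')=h^0(\E'(K_S))=0$, because $\E'(K_S)$ is semistable of slope $K_S\cdot H=-5<0$ and so has no sections. Therefore $h^0(\E')\ge\chi(\E')=2$, and $\E'$ admits a nonzero section.

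The decisive step is to show that such a section $s:\O_{X_1}\to\E'$ vanishes nowhere. Saturating its image to a line subbundle $\O_{X_1}(D)\hookrightarrow\E'$ with $D\ge 0$ effective, semistability of $\E'$ forces $D\cdot H\le\mu(\E')=0$, while $H$ ample and $D$ effective give $D\cdot H\ge 0$ with equality only for $D=0$; thus $s$ has no divisorial zeros. Its remaining zero scheme then has codimension $\ge 2$ and class $c_2(\E')=0$, so it is empty. Consequently $s$ is a subbundle inclusion fitting into $0\to\O_{X_1}\to\E'\to\N\to0$ with $\N$ a line bundle and $c_1(\N)=c_1(\E')=0$; since $\Pic(X_1)=\Z C_0\oplus\Z f$ is torsion-free we get $\N\cong\O_{X_1}$, and as $\Ext^1(\O_{X_1},\O_{X_1})=H^1(\O_{X_1})=0$ the sequence splits. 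Hence $\E'\cong\O_{X_1}^{\oplus 2}$ and $\E\cong\O_{X_1}(C_0+f)^{\oplus 2}$.

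The main obstacle I anticipate is precisely the nowhere-vanishing of the section: both the semistability argument excluding a divisorial zero locus and the vanishing $c_2(\E')=0$ handling the isolated-zeros case are essential, and this is where the positivity of Ulrich bundles (via semistability) genuinely enters. By contrast, the Chern-class bookkeeping that isolates $c_1(\E)=2(C_0+f)$ and the splitting of the final extension are routine once these inputs are available.
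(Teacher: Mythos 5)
Your proof is correct, but it takes a genuinely different route from the paper's in the decisive step. Both arguments begin by pinning down $\det\E$: the paper writes $D=\alpha C_0+\beta f$ and uses only that $D$ is nef together with $D^2=c_1(\E)^2=4$ from Lemma \ref{sup2-bis} to force $\alpha=\beta=2$, whereas you combine $c_1(\E)^2=4$ with the degree formula of Lemma \ref{ulr}(x); the two computations are equivalent in substance. The real divergence is afterwards: the paper simply invokes Antonelli's classification of Ulrich bundles on Hirzebruch surfaces, \cite[Thm.~1.1(1)]{a}, to conclude $\E=\O_{X_1}(C_0+f)^{\oplus 2}$ once $c_1(\E)=2(C_0+f)$ is known. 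You instead give a self-contained argument: twisting by $\O_{X_1}(-C_0-f)$ to reach $c_1=c_2=0$, using semistability from \cite[Thm.~2.9]{ch} and Serre duality to kill $h^2$, producing a section via Riemann--Roch, and showing it is nowhere vanishing (no divisorial zeros by semistability against the ample $H$, no isolated zeros since $c_2(\E')=0$), so that the resulting extension of $\O_{X_1}$ by $\O_{X_1}$ splits. Your route costs more work but avoids the external classification result entirely and makes transparent exactly where the Ulrich hypothesis enters (through semistability); the paper's route is shorter at the price of citing \cite{a}. All the individual steps you carry out check out, including the intersection-theoretic bookkeeping on $X_1$.
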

\begin{proof}
We know by Example \ref{cusc} that $\E = \O_{X_1}(C_0+f)^{\oplus 2}$ is Ulrich on $(X_1,C_0+2f)$. Now, assume that $\E$ is a rank $2$ Ulrich bundle $\E$ with $c_2(\E)=1$ and let $\det \E = \O_X(D)$. Then $D=\alpha C_0+\beta f$ and it follows from Lemma \ref{sup2-bis} that $D^2=4$, hence $\alpha \ge 1$ since $D$ is nef by Lemma \ref{ulr}(iii) and $4 = D^2 = \alpha(2\beta-\alpha)$ implies that $\alpha=\beta=2$. It follows from \cite[Thm.~1.1(1)]{a} that $\E = \O_{X_1}(C_0+f)^{\oplus 2}$.
\end{proof}

We now prove Theorem \ref{superficie}.

\renewcommand{\proofname}{Proof of Theorem \ref{superficie}}
\begin{proof}
We know by Lemma \ref{sup2-bis} that $Z$ is connected if and only if $c_2(\E)=1$. In the cases (i)-(iii) we therefore have that $Z$ is connected by Lemma \ref{cn=1-tris}(i) and Examples \ref{cub}, \ref{cusc}. Vice versa, assume that $Z$ is connected, so that $c_2(\E)=1$ and $r=2, 2 \le d \le 3$ by Lemma \ref{sup2-bis}. If $d=2$ we conclude by Lemma \ref{cn=1-tris}(i) that we are in case (i). If $d=3$ we have that $N \le 4$. If $N=3$ we get by Example \ref{cub} that we are in case (ii), while if $N=4$ we know by Lemma \ref{hir1-bis} that we are in case (iii). 
\end{proof}
\renewcommand{\proofname}{Proof}

\section{Threefolds}

In the case $\E$ is an Ulrich bundle on a threefold, we have a lot of information about connectedness of Ulrich subvarieties.

First, we can characterize precisely the non-big case.
  
\begin{prop}
\label{non big}
Let $X \subset \P^N$ be a smooth irreducible threefold and let $\E$ be a non-big Ulrich bundle on $X$ with $c_2(\E) \ne 0$. Then:
\begin{itemize}
\item[(i)] If $c_1(\E)^3>0$, every Ulrich subvariety associated to $\E$ is connected.
\item[(ii)] If $c_1(\E)^3=0$, the Ulrich subvarieties associated to $\E$ are connected if and only if $(X, \O_X(1), \E) = (\P^1 \times \P^2, \O_{\P^1}(1) \boxtimes \O_{\P^2}(1), q^*(\O_{\P^2}(1)^{\oplus 2}))$, where $q : \P^1 \times \P^2 \to \P^2$ is the second projection.
\end{itemize}
\end{prop}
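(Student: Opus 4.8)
The plan is to reduce everything to the geometry of the smooth surface $Y$ cut out inside $X$. Since $n=3$, Lemma \ref{zeta1} applied to a general Ulrich subvariety $Z=D_{r-2}(\varphi)$ produces a smooth irreducible surface $Y\in|\det\E|$ with $Z\subset Y$, a globally generated line bundle $\O_Y(Z)$ (generated by at most $r$ sections), and the numerical identity $c_3(\E)=Z^2$ from \eqref{c33}, where $Z^2$ is computed on $Y$. By Proposition \ref{conn} all Ulrich subvarieties have the same number of connected components, so throughout it suffices to treat a single general $Z$.

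For part (i) set $\O_X(D)=\det\E$, so that $D^3=c_1(\E)^3>0$ and $D$ is big and nef; Kawamata–Viehweg then gives $H^1(\O_X(-D))=H^2(\O_X(-D))=0$, and restricting to $Y$ shows that $L_Y:=\O_X(D)|_Y$ is big and nef on $Y$. When $r=2$ one has $c_3(\E)=0$ automatically, hence $Z^2=0$; since $\O_Y(Z)$ is globally generated by at most two sections, these realize $Z$ as a fibre of a morphism $\psi:Y\to\P^1$, and the irreducibility of $Y$ forces $h^0(\O_Y)=1$, so $\psi_*\O_Y=\O_{\P^1}$ and the fibre $Z$ is connected. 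When $r\ge3$, Corollary \ref{singconn} reduces the entire statement to proving $c_3(\E)\neq0$, so no cohomology vanishing is even needed.

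The main obstacle is precisely this last implication: that non-bigness together with $c_1(\E)^3>0$ forces $c_3(\E)\neq0$ when $r\ge3$. I would work on $Y$, writing $z=c_1(\O_Y(Z))$ and $\ell=c_1(\L)$ for the two globally generated, hence nef, classes with $L_Y=z+\ell$. Because $\dim X=3$ one has $c_2(\E)^2=0$, so $c_3(\E)=z^2$ and, by the Schur-polynomial computation of Lemma \ref{seg}, $s_3(\E^*)=c_1(\E)^3-2c_1(\E)c_2(\E)+c_3(\E)=(L_Y-z)^2=\ell^2$; thus non-bigness of $\E$ reads exactly $\ell^2=0$, while $L_Y^2=c_1(\E)^3>0$. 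I would then rule out $z^2=0$: applying the Hodge index theorem against the big and nef class $L_Y$, together with the global generation of $z$ and $\ell$ (which, once a self-intersection vanishes, pins down a fibration of $Y$ over a curve), the simultaneous vanishing $z^2=\ell^2=0$ must be shown to contradict the Ulrich structure of $\E$ rather than merely its global generation. This exclusion is the delicate heart of part (i), and is where a genuinely Ulrich-theoretic input is required, since the Hodge index inequality by itself leaves $z^2=0$ numerically possible.

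For part (ii) we have $c_1(\E)^3=0$ while $c_2(\E)\neq0$, so Lemma \ref{c3no}(i) gives $c_1(\E)^2\neq0$ and hence $\nu(\det\E)=2$. I would then repeat, one dimension higher, the argument used to prove Theorem \ref{c2=0}: the Grassmannian map $\Phi:X\to{\mathbb G}(r-1,\P H^0(\E))$ now has one-dimensional fibres by \cite[Thm.~2]{ls}, and \cite[Lemmas 2.10 and 2.12]{lms} identify $(X,\O_X(1),\E)$ as a linear Ulrich triple over a smooth surface $B$. The conclusion is then immediate from Lemma \ref{lut2}, which states that the Ulrich subvarieties of such a triple are connected precisely in the case $(\P^1\times\P^2,\O_{\P^1}(1)\boxtimes\O_{\P^2}(1),q^*(\O_{\P^2}(1)^{\oplus2}))$, exactly as claimed.
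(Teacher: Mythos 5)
Your reduction for part (ii) is essentially sound and close to the paper's: the paper simply quotes the classification of non-big Ulrich bundles on threefolds (\cite[Thm.~2]{lm}) to conclude that $(X,\O_X(1),\E)$ is a linear Ulrich triple over a surface and then invokes Lemma \ref{lut2}, whereas you re-derive the triple structure via $\nu(\det\E)=2$ and \cite{ls}, \cite{lms}; either way the endpoint is the same. Part (i), however, has two genuine gaps. First, in the $r=2$ case your key step is false: from $Z^2=0$ and $\O_Y(Z)$ spanned by two sections you get a morphism $\psi:Y\to\P^1$ with $Z$ contained in a fibre, but $h^0(\O_Y)=1$ does \emph{not} imply $\psi_*\O_Y\cong\O_{\P^1}$ --- the Stein factorization of $\psi$ can pass through a nontrivial finite cover of $\P^1$ (e.g.\ a curve of positive genus, which still has $h^0(\O)=1$), and then the fibres of $\psi$ are disconnected. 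This is not a hypothetical worry: Example \ref{ese} with $n=3$, $r=2$ produces an Ulrich bundle with $c_1(\E)^3>0$, $c_3(\E)=0$, $Z^2=0$ on $Y$, $\O_Y(Z)$ generated by two sections, and $Z$ \emph{disconnected}. Your $r=2$ argument never uses non-bigness, so it would apply verbatim to that example and prove a false statement. Second, for $r\ge 3$ you correctly reduce to showing $c_3(\E)\neq 0$ via Corollary \ref{singconn}, but you then state explicitly that you cannot carry out this exclusion; that is precisely the substantive content of part (i) in this range, so the proof is incomplete by your own admission.

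The missing ingredient in both places is the classification input. The paper's proof of (i) does not try to deduce $c_3(\E)\neq0$ or connectedness from numerical data on $Y$ at all: it uses \cite[Thm.~2]{lm} to pin down the non-big Ulrich bundles with $c_1(\E)^3>0$ on a threefold as either the extensions of Lemma \ref{exa} on a $\P^2$-bundle over a curve or the spinor bundle on $Q_3$, and in each case verifies the cohomological vanishing $H^1(\E(-\det\E))=0$ directly, so that connectedness follows from Lemma \ref{h1=0}(i) via the Eagon--Northcott resolution (this works uniformly in $r$ and sidesteps the $c_3$ question). If you want to salvage your surface-theoretic approach, you must locate where non-bigness ($\ell^2=0$ in your notation) interacts with the Ulrich condition to force either $z^2>0$ or the vanishing $H^1(\E(-\det\E))=0$; the Hodge index theorem alone, as you note, does not suffice.
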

\begin{proof}
Let $\det \E = \O_X(D)$. If $c_1(\E)^3>0$, it follows from \cite[Thm.~2]{lm} that either $\E$ is as in Lemma \ref{exa} and we are done, or $(X,\O_X(1),\E)=(Q,\O_Q(1), \mathcal S)$ where $Q=Q_3$ and $\mathcal S$ is the spinor bundle. In the latter case, we have that $D=H$, hence $H^1(\E(-D))=H^1(\E(-1))=0$, hence any Ulrich subvariety is connected by Lemma \ref{h1=0}. This proves (i). In case (ii), we have by \cite[Thm.~2]{lm} and $c_2(\E) \ne 0$ that $(X, \O_X(1), \E)$ is a linear Ulrich triple over a surface and we conclude by Lemma \ref{lut2}.
\end{proof}

In the big case, even though our results are not conclusive, they strongly restrict the possibilities, as follows.

\begin{prop}
\label{big}
Let $X \subset \P^N$ be a smooth irreducible threefold, let $\E$ be a big rank $r \ge 2$ Ulrich bundle on $X$ and let $Z$ be an Ulrich subvariety associated to $\E$. Then $Z$ is nonempty and we have:
\begin{itemize}
\item[(i)] If $\E$ is V-big and $r \ge 3$, then $Z$ is connected.
\item[(ii)] If $\E$ is not V-big, then $(X, \O_X(1))$ is one of the following:
\begin{itemize}
\item[(a)] A linear $\P^2$-bundle over a smooth curve.
\item[(b)] A del Pezzo $3$-fold of degree $d$ with $3 \le d \le 7$.
\item[(c)] A quadric fibration over a smooth curve.
\item[(d)] A linear $\P^1$-bundle over a smooth surface.
\end{itemize}
\end{itemize}
Moreover, in case (b), then $Z$ is connected if $3 \le d \le 5$ and if $r=2$ and $6 \le d \le 7$.
\end{prop}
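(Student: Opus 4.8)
The plan is to dispatch nonemptiness and part (i) immediately, and then concentrate on the classification in (ii) and the connectedness assertion in case (b).

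For nonemptiness, since $\E$ is big we have $c_1(\E)^3>0$ by \cite[Rmk.~2.2]{lm}, hence $c_1(\E)^2\ne 0$, so Remark \ref{usv3} shows every Ulrich subvariety $Z$ is nonempty; equivalently $c_2(\E)\ne 0$ by Theorem \ref{c2=0}. Part (i) is then a direct citation: V-bigness is exactly $\B_+(\E)\ne X$, and with $n=3$ and $r\ge 3$ one has $2\le\min\{r-1,n-1\}$, so Corollary \ref{singconn}(iv) (equivalently Theorem \ref{connesse}(iv) applied with $k=2$) yields that $Z$ is connected.

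The heart of the matter is the classification in (ii). Assuming $\E$ is not V-big, so $\B_+(\E)=X$, the first step is \cite[Thm.~2]{bu}, which gives $\B_+(\E)=\bigcup_L L$ and hence exhibits $X$ as covered by lines $L$ with $\E_{|L}$ not ample; since $\E$ is globally generated each such $\E_{|L}$ acquires a trivial summand. I would then combine this covering family with the bigness of the \emph{bundle} $\E$ to control the second adjoint bundle $K_X+2H$: the expectation is that an $\E$-non-ample covering family of lines, for a big $\E$, forces the Mori--Fujita reduction of $(X,\O_X(1))$ to be non-general, yielding the scrolls over a curve (a), the del Pezzo threefolds (b), the quadric fibrations over a curve (c) and the scrolls over a surface (d), together with the two extra polarizations $(\P^3,\O(1))$ and $(Q_3,\O(1))$. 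Those last two are removed using bigness again: on $(\P^3,\O(1))$ every Ulrich bundle is $\O_{\P^3}^{\oplus r}$ by Lemma \ref{ulr}(viii), which is not big, and on $(Q_3,\O(1))$ every Ulrich bundle is a sum of spinor bundles, which is nef but not big --- this is the non-big case already isolated in Proposition \ref{non big}(i). The bound $3\le d\le 7$ in (b) should emerge from the same analysis, the extremal degrees $d\in\{1,2,8\}$ supporting only Ulrich bundles that are either not big or V-big. The genuine obstacle lies precisely here: showing that bigness of $\E$, rather than mere global generation, is what excludes the index-one Fano threefolds (for which $K_X+2H$ is still nef and big) that are nevertheless covered by lines.

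For the connectedness in case (b) I would invoke the positivity criteria of Lemma \ref{h1=0}. When $3\le d\le 5$ the del Pezzo threefold has Picard number one, so $\Pic(X)\cong\Z H$ with $h^0(H)\ge 2$, and Lemma \ref{h1=0}(ii) gives connectedness of every $Z$ at once. When $6\le d\le 7$, where $\rho(X)\ge 2$, and $r=2$, I would instead use Lemma \ref{h1=0}(iii): on a del Pezzo threefold $K_X=-2H$, so $D-K_X-(n+1)H=c_1(\E)-2H$, and it remains only to check that $c_1(\E)-2H$ is ample for a rank-$2$ Ulrich bundle on the two relevant models (the blow-up of $\P^3$ at a point in degree $7$, and $\P^1\times\P^1\times\P^1$ or $\P(T_{\P^2})$ in degree $6$). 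This last verification I would carry out on each explicit model using $c_1(\E)\cdot H^2=2d$ from Lemma \ref{ulr}(x) together with the nefness of $\det\E$; the higher-rank subcases in degrees $6,7$ are left open, in keeping with the non-conclusive nature of the statement.
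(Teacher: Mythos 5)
Your treatment of nonemptiness, of part (i), and of case (b) with $3\le d\le 5$ matches the paper (Remark \ref{usv3}, Theorem \ref{connesse}(iv), and Lemma \ref{h1=0}(ii) respectively), and your route to the classification in (ii) is in spirit the paper's: after \cite[Thm.~2]{bu} shows $X$ is covered by lines, the paper simply cites the Lanteri--Palleschi classification \cite[Thm.~1.4]{lp} (with \cite[Thm.~0.2]{sv} for (d)). Two remarks on (ii): the ``genuine obstacle'' you flag is not one, since a rational curve through a general point of a smooth variety has anticanonical degree at least $2$, so index-one Fano threefolds are never covered by lines and are excluded by the cited classification, not by bigness of $\E$; and the bound $3\le d\le 7$ comes from very ampleness of the fundamental divisor on del Pezzo threefolds (degrees $1,2$ are not very ample, degree $8$ is $(\P^3,\O(2))$ which contains no lines), again not from bigness of $\E$.

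The genuine gap is your argument for case (b) with $r=2$ and $6\le d\le 7$. You propose to apply Lemma \ref{h1=0}(iii) by checking that $D-K_X-(n+1)H=c_1(\E)-2H$ is ample. But Lemma \ref{ulr}(x) gives $c_1(\E)\cdot H^2=\frac{r}{2}(K_X+4H)\cdot H^2=2H^3=2d$ for $r=2$ on a del Pezzo threefold, so $(c_1(\E)-2H)\cdot H^2=2d-2d=0$: the divisor you need to be ample has degree zero against $H^2$ and therefore is never ample. (More generally, for any rank $2$ Ulrich bundle the divisor $D-K_X-(n+1)H$ has $H^{n-1}$-degree zero, so Lemma \ref{h1=0}(iii) is vacuous in rank $2$.) The verification you propose to ``carry out on each explicit model'' would thus fail on every model, and no argument is left for these degrees. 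The paper handles this case entirely differently: it runs through the three del Pezzo threefolds of degree $6$ and $7$, namely $\P(T_{\P^2})$, $\P^1\times\P^1\times\P^1$ and the blow-up of $\P^3$ at a point, and uses the classification of rank $2$ aCM/Ulrich bundles there from \cite{cfm1}, \cite{cfm2}, \cite{cfm3}, either quoting connectedness of the zero loci directly or computing that $H^1(\E(-D))\cong H^1(\E^*)=0$ for the decomposable cases so that Lemma \ref{h1=0}(i) applies. You would need to supply an argument of this kind to close the gap.
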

\begin{proof}
Ulrich subvarieties associated to $\E$ are nonempty by Remark \ref{usv3} and Lemma \ref{ulr}(viii). Also, $c_1(\E)^3 > 0$ by \cite[Rmk.~2.2]{lm}. First, (i) follows from Theorem \ref{connesse}(iv). If $\E$ is not V-big, that is $\B_+(\E)=X$, then $X$ is covered by lines by \cite[Thm.~2]{bu} and it follows from \cite[Thm.~1.4]{lp} (for (d) use also \cite[Thm.~0.2]{sv}) that $(X, \O_X(1))$ is as in (a)-(d) above. In case (b), consider the classification of del Pezzo threefolds (see for example \cite[pages 860-861]{lp}, \cite[Table, page 710]{f1}). If $3 \le d \le 5$, we have that $\Pic(X) \cong \Z H$, hence $Z$ is connected by Lemma \ref{h1=0}(ii). To do the case $r=2$ and $6 \le d \le 7$, we will use Lemma \ref{ulr}(vi). Suppose that $X \cong \P(T_{\P^2})$. Then $Z$ is connected by \cite[Main Thm.~for F]{cfm1} if $\E$ is indecomposable. If $\E$ is direct sum of two Ulrich line bundles, it is easy to show, using \cite[Cor.~2.7]{cfm1}, that the only possibility is when $c_1(\E)=2H$ and then $Z$ is connected by Lemma \ref{h1=0}(i). When $X$ is the blow-up of $\P^3$ at a point, it follows from \cite[Prop.~2.7 and Thm.~A]{cfm3} that $Z$ is connected. Finally, when $X \cong \P^1 \times \P^1 \times  \P^1$, denote by $H_i$ the pull-back of $\O_{\P^1}(1)$ by the $i$-th projection, $1 \le i \le 3$. We have that either $\E$ is indecomposable and $Z$ is connected by \cite[Thm.~B]{cfm2}, unless we are in \cite[Thm.~B, case (5)]{cfm2} or $\E$ is decomposable. Now, in case (5), $c_1(\E)H^2=10$, while Lemma \ref{ulr}(x), gives that $c_1(\E)H^2=12$, hence this case does not occur for Ulrich bundles. If $\E$ is decomposable, then it is direct sum of two Ulrich line bundles and we easily deduce, from \cite[Lemma 2.4]{cfm2}, that each direct summand must be of type $2H_i+H_j$, for $i \ne j$. It follows that $H^1(-2H_i-H_j)=0$ and hence $H^1(\E(-D)) \cong H^1(\E^*)=0$. Hence $Z$ is connected by Lemma \ref{h1=0}(i).
\end{proof}

We remark that examples as in (a) with $Z$ disconnected do exist, see Example \ref{ese}. We do not know if there are others.
 
\section{Some examples}
\label{dieci}
 
We give some explicit examples, with various vanishings of Chern classes.

The following is an example of a big rank $r \ge 2$ Ulrich bundle $\E$ with $c_2(\E) \ne 0, c_2(\E)^2=c_3(\E)=0$ and disconnected Ulrich subvarieties (even in rank $2$).
\begin{example}
\label{ese}
Let $B$ be a smooth irreducible curve of genus $g$, let $\F$ be a rank $n \ge 2$ very ample bundle on $B$, let $X=\P(\F)$ with bundle map $\pi: X \to B$ and let $H = \xi$ be the tautological line bundle on $X$. Note that $\xi^n \ge 2$, otherwise $X=\P^n$, a contradiction since $\rho(X)=2$. Let $M$ (respectively $\G$) be a line bundle (resp. a rank $r-1$ vector bundle, with $r \ge 2$) on $B$ such that $H^i(M)=H^i(\G)=0$ for $i \ge 0$. Let $\L = \xi+\pi^*M$. Then $\L$ is an Ulrich line bundle for $(X,\O_X(1))$: In fact $\L(-pH)=(1-p)\xi+\pi^*M$. Since $R^j \pi_*((1-p)\xi)=0$ for $j \ge 1, 1 \le p \le n$ and for $j=0, p \ge 2$, we get, for $i \ge 0$, that $H^i(\L(-pH))=0$ for $2 \le p \le n$ and $H^i(\L(-H))=H^i(M)=0$. Let $\E$ be any bundle sitting in an extension of type
$$0 \to \L \to \E \to \pi^*(\G(\det \F)) \to 0.$$
It follows from \cite[Lemma 4.1]{lo} that $\E$ is an Ulrich bundle for $(X,\O_X(1))$. Setting 
$$N=c_1(\G(\det \F))=c_1(\G)+(r-1)c_1(\F)$$ 
we have 
$$c_1(\E)=\L+\pi^*N, c_2(\E)=c_1(\L)c_1(\pi^*(\G(\det \F))=(\xi+\pi^*M)\pi^*N=\xi \pi^*N.$$
Since $\L$ and $\pi^*(\G(\det \F))$ are Ulrich, we have that $\L$ and $\pi^*N$ are globally generated. Moreover, $\pi^*N$ is not trivial by Lemma \ref{ulr}(iii). Therefore $c_2(\E)=\xi \pi^*N \ne 0$. Also, we have that $\xi^n+\xi^{n-1}\pi^*M>0$: In fact, if $g \ge 1$, we have that $\pi^*M$ is nef, hence $\xi^n+\xi^{n-1}\pi^*M \ge \xi^n>0$. If $g=0$, we have that $M=\O_{\P^1}(-1)$, hence $\xi^n+\xi^{n-1}\pi^*M=\xi^n-1>0$. Therefore
\begin{equation}
\label{c1n}
c_1(\E)^n=\L^n+\L^{n-1}\pi^*N=\xi^n+\xi^{n-1}\pi^*M+\L^{n-1}\pi^*N \ge \xi^n+\xi^{n-1}\pi^*M>0.
\end{equation} 
Moreover $c_2(\E)^2=\xi^2 \pi^*N^2=0$, and, for $i \ge 3$,
$$\begin{aligned}[t] 
c_i(\E) &= \sum\limits_{j=0}^i c_j(\L)c_{i-j}(\pi^*(\G(\det \F))=c_i(\pi^*(\G(\det \F))+c_{i-1}(\pi^*(\G(\det \F))c_1(\L)=\\
& = \pi^*c_i(\G(\det \F))+\pi^*c_{i-1}(\G(\det \F))c_1(\L)=0.
\end{aligned}$$
Now, assume that $n \ge 3$ and let $Z$ be any Ulrich subvariety associated to $\E$. Then $Z$ is nonempty by Proposition \ref{c2=0}. Let $D=\L+\pi^*N$ and consider the exact sequence
$$0 \to \O_X(-D)^{\oplus (r-1)} \to \E(-D) \to \I_{Z/X} \to 0.$$
Since $D$ is big by \eqref{c1n} and nef by Lemma \ref{ulr}(iii), we have that $H^1(\O_X(-D))=H^2(\O_X(-D))=0$ by Kawamata-Viehweg. Thus,
$$h^1(\I_{Z/X})=h^1(\E(-D))=h^1(\pi^*(\G(\det \F))(-D))+h^1(\L(-D)).$$
Now, $h^1(\pi^*(\G(\det \F))(-D))=h^1(\pi^*(\G(\det \F-N-M))(-\xi))=0$, since $R^j \pi_*(-\xi)=0$ for $j \ge 0$. Also,
$0=\chi(\G)=\deg \G+(r-1)(1-g)$, hence $\deg \G = (r-1)(g-1)$ and $\deg N = (r-1)(\deg \F + g-1)>0$, since $\F$ is very ample.
Therefore $h^0(-N)=0$ and $h^1(-N)=\deg N+g-1=(r-1)\deg \F + r(g-1)$. It follows that
$$h^1(\L(-D))=h^1(\pi^*(-N))=h^1(-N)=(r-1)\deg \F + r(g-1)$$
so that
$$h^1(\I_{Z/X})=(r-1)\deg \F + r(g-1).$$
Now the exact sequence
$$0 \to \I_{Z/X} \to \O_X \to \O_Z \to 0$$
and $h^1(\O_X)=h^1(\O_B)=g$ show that
$$h^0(\O_Z) \ge h^1(\I_{Z/X})+1-g=(r-1)(\deg \F +g-1) \ge 2$$
hence $Z$ is disconnected. Finally we show that $\E$ is big. In fact, using Lemma \ref{seg}, we have
$$s_n(\E^*)=[\xi+\pi^*(M+N)]^n-(n-1)[\xi+\pi^*(M+N)]^{n-2}\xi \pi^*N=\xi^n+\xi^{n-1}(\pi^*N+n\pi^*M)=$$
$$=\deg \F + \deg N + n \deg M=r\deg \F + \deg \G + n \deg M= r\deg \F + (r+n-1)(g-1)>0$$
both if $g \ge 1$ and if $g=0$ since in this case $\deg \F \ge n$. Thus $\E$ is big but not V-big if $r \ge 3$ by Theorem \ref{connesse}(iv). 

For an explicit instance of the above, let $X=\P^1 \times \P^{n-1} \subset \P^N$ 
embedded with the Segre embedding and let $\E = \O_X(n-1,0)^{\oplus (r-1)} \oplus \O_X(0,1)$.
\end{example}

The following is an example of a rank $r \in \{2, 3\}$ globally generated bundle $\E$ with $c_3(\E)=0, c_2(\E)^2 \ne 0, H^0(\E^*)=0$ and disconnected degeneracy locus $D_{r-2}(\varphi)$.

\begin{example}
\label{eses}
Let $Q \subset \P^4$ be a smooth quadric, let $W=\P^2 \times Q$ and consider the Segre embedding $W \subset \P^{14}$ given by $L =\O_{\P^2}(1) \boxtimes \O_Q(1)$. Note that $K_W = -3L$, so that $W$ is a Mukai $5$-fold that is scheme-theoretically cut out by quadrics in $\P^{14}$ (see for example \cite[Rmk.~26]{r}. Let $X \in |L|$ be a general hyperplane section of $W$ and let $H=L_{|X}$. It follows from \cite[Ex.~14.1.5]{bs} that $X$ contains exactly two fibers, that are a linear $\P^2$ in $\P^{14}$, say $F_i=\P^2 \times \{z_i\}, i=1, 2$, with $N_{F_i/X} \cong \Omega_{\P^2}(1)$. Note that $W \cong \P(\O_Q(1)^{\oplus 3})$ and $X=Z(\sigma)$, with $\sigma \in H^0(\O_Q(1)^{\oplus 3})$ a general section. In particular we can assume that $z_1, z_2$ are general points of $Q$. Let $Z = F_1 \sqcup F_2 \subset X$. We first observe that $\I_{Z/X}(H)$ is globally generated. In fact, we have a surjection $\I_{Z/W}(L) \to \I_{Z/X}(H)$, hence it is enough to prove that $\I_{Z/W}(L)$ is globally generated. On the other hand, we have that the linear span $\langle Z \rangle$ is a linear $\P^5$ in $\P^{14}$ and of course $\I_{\langle Z \rangle/\P^{14}}(1)$ is globally generated. Since we have a surjection $\I_{\langle Z \rangle/\P^{14}}(1) \to \I_{\langle Z \rangle \cap W/W}(L)$, we just need to prove that $\langle Z \rangle \cap W=Z$. To this end, let $w \in \langle Z \rangle \cap W$. Then $w \in \langle f_1, f_2 \rangle$, with $f_1, f_2 \in Z$. If $f_1, f_2 \in F_1$ (or in $F_2$), then $w \in \langle f_1, f_2 \rangle
\subset F_1 \subset Z$. If $f_1 \in F_1$ and $f_2 \in F_2$, then we observe that the line $\langle f_1, f_2 \rangle$ is not contained in $W$. In fact, it is well-known that the only lines in $W$ are of type $R \times \{p\}$, with $R \subset \P^2$ a line, or $\{p\} \times R$, with $R \subset Q$ a line. Both cases are excluded since $f_i=(x_i,z_i)$ with $x_1 \ne x_2, z_1 \ne z_2$. Therefore $\langle f_1, f_2 \rangle \not\subset W$ and since $W$ is scheme-theoretically cut out by quadrics, we can find a quadric $Q' \subset \P^{14}$ such that $W \subset Q'$ and $\langle f_1, f_2 \rangle \not\subset Q'$. But then $w \in \langle f_1, f_2 \rangle \cap W \subset \langle f_1, f_2 \rangle \cap Q' = \{f_1, f_2\} \subset Z$. Thus, we have proved that $\I_{Z/X}(H)$ is globally generated. Now we have
$$(\Lambda^2 N_{Z/X})(-H_{|Z}) = \O_Z(K_X-K_Z-H) \cong \O_Z(-3H-K_Z) \cong \O_Z$$
since, on each $F_i \cong \P^2$ we have that $\O_{F_i} \otimes \O_Z(-3H-K_Z) \cong \O_{\P^2}$. Moreover, $H^2(\O_X(-H))=0$ by Kodaira vanishing and therefore the Hartshorne-Serre correspondence gives a rank two vector bundle $\F$ on $X$, with $\det \F = H$, sitting in an exact sequence
\begin{equation}
\label{hs}
0 \to \O_X \to \F \to \I_{Z/X}(H) \to 0.
\end{equation}
Also, note that $H^1(\O_X)=0$ by the exact sequence
$$0 \to \O_W(-L) \to \O_W \to \O_X \to 0$$
and the facts that $H^1(\O_W)=0$ by K\"unneth, since $H^1(\O_{\P^2})=H^1(\O_Q)=0$ and $H^2(\O_W(-L))=0$ by Kodaira vanishing. Hence $\F$ is globally generated. Moreover $\F^* \cong \F(-H)$ and twisting \eqref{hs} we get
$$0 \to \O_X(-H) \to \F^* \to \I_{Z/X} \to 0$$
and Kodaira vanishing gives $H^0(\F^*)=0, h^1(\F^*)=h^1(\I_{Z/X})=h^0(\O_Z)-h^0(\O_X)=1$. Also, we have 
$c_2(\F)=[Z]=[F_1]+[F_2]$ and therefore $c_2(\F)^2=[F_1]^2+[F_2]^2=2$, since the self-intersection formula \cite[Appendix A, C.7]{h1} shows that $[F_i]^2=c_2(N_{F_i/X})=c_2(\Omega_{\P^2}(1))=1$. Finally, if $r=2$ we set $\E=\F$. If $r=3$, let $0 \ne e \in H^1(\F^*)$ and consider the associated extension
$$0 \to \O_X \to \E \to \F \to 0.$$
Since $\O_X$ and $\F$ are locally free, we have that so is $\E$: In fact $\SExt^i_{\O_X}(\E,\O_X)=0$ for $i >0$ because the same holds for $\O_X$ and $\F$. Moreover, $\E$ is globally generated since $H^1(\O_X)=0$ and
$c_3(\E)=c_3(\F)=0, c_2(\E)^2=c_2(\F)^2=2$. Moreover, dualizing the above exact sequence we get
$$0 \to \F^* \to \E^* \to \O_X \to 0$$
with cohomology sequence, using $H^0(\F^*)=0, H^1(\F^*)=\C e$,
$$0 \to H^0(\E^*) \to H^0(\O_X) \mapright{\delta} H^1(\F^*)$$
where, by construction, $\delta(1)=e$, so that $\delta$ is an isomorphism and $H^0(\E^*)=0$. The fact that $D_{r-2}(\varphi)$ is disconnected for any $\varphi : \O_X^{\oplus (r-1)} \to \E$ such that $D_{r-2}(\varphi)$ is reduced of pure codimension $2$, follows from Theorem \ref{connesse}. As a matter of fact, one of these degeneracy loci is just $Z$.
\end{example}

The following is an example of a rank $4$ Ulrich bundle $\E$ with $c_3(\E) \ne 0, c_2(\E)^2 \ne 0, c_4(\E)=0, H^0(\E^*)=0$ and disconnected degeneracy locus $D_{r-3}(\varphi)$.
\begin{example}
\label{eses2}
Let $Q \subset \P^5$ be a smooth quadric and let $\E= \mathcal S' \oplus \mathcal S''$. It is easily checked, using \cite[Rmk.~2.9]{o}, that $c_2(\E)=2(e_2+e_2')$, where $e_2, e_2'$ generate $H^4(Q,\Z)$. Also, $c_3(\E)=2H^3, c_4(\E)=0$ and $c_2(\E)^2=8$. Now, for any $\varphi : \O_X^{\oplus 3} \to \E$ such that $D_1(\varphi)$ is reduced of pure codimension $3$, we have that $D_1(\varphi)$ has exactly two connected components: it has at least two by Theorem \ref{connesse} and no more since $[D_1(\varphi)]=c_3(\E)=2H^3$. 
\end{example}

\section{Results in rank $2$}
\label{undici}

We can give a precise result when $H^1(\O_X)=0$. In particular, the next lemma shows that, in rank $2$, connectedness of degeneracy loci (or of Ulrich subvarieties), in the case $k=2$, is not governed by Chern classes as in the case of higher rank.

\begin{lemma}
\label{conn2}
Let $X$ be a smooth variety with $H^1(\O_X)=0$. Let $\E$ be a globally generated vector bundle of rank $2$ on $X$ with $c_2(\E) \ne 0$ and $H^0(\E^*)=0$ (which holds, in particular, if $\E$ is Ulrich). Let $h=h^1(\E^*)$ and let $\sigma \in H^0(\E)$ be a general section. Then we have:    
\begin{itemize}
\item[(i)] $Z(\sigma)$ has $h+1$ connected components. In particular $Z(\sigma)$ is connected if and only if $H^1(\E^*)=0$.
\item[(ii)] If $H^1(\E^*) \ne 0$, there is a globally generated bundle $\tilde \E$ of rank $h+2$ on $X$ with $H^0(\tilde\E^*)=H^1(\tilde\E^*)=0$, such that $\E$ arises as a quotient, with $Z(\sigma)=D_h(\psi)$,
$$0 \to \O_X^{\oplus h} \mapright{\psi} \tilde\E \to \E \to 0.$$
\end{itemize}
Vice versa, given a globally generated bundle $\tilde \E$ as in (ii), then $Z(\sigma)$ has $h+1$ connected components.
\end{lemma}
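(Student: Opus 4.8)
The plan is to read this as the rank-two, $s=h^0(\E^*)=0$ specialization of Lemma \ref{numcc}, and to extract all three assertions from the exact sequences already produced in the proofs of Lemmas \ref{connesse1} and \ref{numcc}. First I would fix the geometry: since $\E$ has rank two we have $\E^*\cong\E(-D)$, where $\O_X(D)=\det\E$; since $c_2(\E)\ne0$, a general section $\sigma$ has $Z(\sigma)$ nonempty (Lemma \ref{gg}) and smooth of pure codimension two, with $[Z(\sigma)]=c_2(\E)$ (Lemma \ref{red}, applied to the general section $\sigma$). The Koszul complex of $\sigma$ reads $0\to\O_X\to\E\to\I_{Z(\sigma)/X}(D)\to0$, and twisting by $-D$ gives
$$0\to\O_X(-D)\to\E^*\to\I_{Z(\sigma)/X}\to0.$$
This is precisely sequence \eqref{e5} in the present situation, with $\H^*\cong\O_X(-D)$ and $\G\cong\I_{Z(\sigma)/X}$ (the isomorphism $\G\cong\I_{Z(\sigma)/X}$ being \eqref{e6}, which here is just the structure sequence $0\to\G\to\O_X\to\O_{Z(\sigma)}\to0$).

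For (i), the number of connected components of the smooth scheme $Z(\sigma)$ equals $h^0(\O_{Z(\sigma)})$, and from $0\to\I_{Z(\sigma)/X}\to\O_X\to\O_{Z(\sigma)}\to0$ together with $H^0(\O_X)=\C$ and $H^1(\O_X)=0$ one gets $h^0(\O_{Z(\sigma)})=1+h^1(\I_{Z(\sigma)/X})$. Everything thus reduces to the identity $h^1(\I_{Z(\sigma)/X})=h^1(\E^*)=h$, which is the content of Lemma \ref{numcc}(c). I would re-prove it by feeding the twisted Koszul sequence into cohomology: using $H^0(\E^*)=0$, $H^0(\I_{Z(\sigma)/X})=0$ (as $Z(\sigma)\ne\emptyset$) and the vanishings $H^j(\O_X(-D))=0$ for $0\le j\le2$ from Lemma \ref{c3no}(ii), the first two vanishings already yield the injection $H^1(\E^*)\hookrightarrow H^1(\I_{Z(\sigma)/X})$, and the $j=2$ vanishing annihilates the connecting map $H^1(\I_{Z(\sigma)/X})\to H^2(\O_X(-D))$ and promotes it to an isomorphism. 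Hence $Z(\sigma)$ has $h+1$ components and is connected exactly when $h=0$.

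For (ii) I would run the construction of Lemma \ref{numcc}(b) verbatim in rank two: setting $\Xi=H^1(\E^*)$, the identity in $\Hom(\Xi,\Xi)\cong\Ext^1(\E,\Xi^*\otimes\O_X)$ gives a non-split extension $0\to\O_X^{\oplus h}\xrightarrow{\psi}\tilde\E\to\E\to0$; then $\tilde\E$ is globally generated because $\E$ is and $H^1(\O_X)=0$, and dualizing shows that the relevant connecting homomorphism is $\Id_\Xi$, so $H^0(\tilde\E^*)=H^1(\tilde\E^*)=0$. The identification $Z(\sigma)=D_h(\psi)$ is local and rank-theoretic: adjoining the summand $\O_X^{\oplus h}$, whose image never drops rank, does not move the locus where the combined map loses rank, since both loci are defined by the same minors (Remark \ref{degeneracy}). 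For the converse, I would apply the already-proven count to $\tilde\E$ itself: it has rank $h+2$ and $h^0(\tilde\E^*)=h^1(\tilde\E^*)=0$, so its degeneracy locus (which equals $Z(\sigma)$) falls under the $h=0$ case and has $(h+2)-0-1=h+1$ components.

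The hard part is the exact count rather than the lower bound: passing from $H^1(\E^*)\hookrightarrow H^1(\I_{Z(\sigma)/X})$ to an isomorphism needs $H^2(\O_X(-\det\E))=0$, and this is exactly where the positivity of $\det\E$ recorded in Lemma \ref{c3no}(ii) is indispensable; the bare hypotheses $c_2(\E)\ne0$ and $H^0(\E^*)=0$ give only $H^1(\O_X(-\det\E))=0$, hence only the inequality $\ge h+1$. The second technical point is checking $H^1(\tilde\E^*)=0$ for the auxiliary bundle, i.e.\ that the connecting map of the dualized defining sequence of $\tilde\E$ is the identity on $\Xi$; this vanishing is precisely what lets the converse collapse to the clean $h=0$ case.
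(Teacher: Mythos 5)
Your proposal is correct and follows essentially the same route as the paper: the paper's proof simply cites Lemma \ref{numcc}(c) for the identification $h^1(\E^*)=h^1(\I_{Z(\sigma)/X})$, Lemma \ref{numcc}(b) for the construction of $\tilde\E$, and Lemma \ref{numcc}(a) for the converse, and your argument is just an inlined version of those three steps (twisted Koszul sequence plus the ideal-sheaf sequence for (i), the extension classified by $\Id_{H^1(\E^*)}$ for (ii), the $h=0$ count applied to $\tilde\E$ for the last claim). Your closing caveat is well taken and is not a defect of your write-up relative to the paper: the isomorphism $H^1(\E^*)\cong H^1(\I_{Z(\sigma)/X})$ does require $H^2(\O_X(-\det\E))=0$, and the paper obtains it by quoting Lemma \ref{numcc}(c), whose standing hypothesis $c_1(\E)^3\ne 0$ is not repeated in the statement of Lemma \ref{conn2}; that this hypothesis is genuinely needed is shown by $X=\P^2$, $\E=T_{\P^2}$, where $h=h^1(\Omega_{\P^2})=1$ but a general vector field vanishes at three distinct points, so $Z(\sigma)$ has three components rather than $h+1=2$.
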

\begin{proof} Note that $Z(\sigma) \ne \emptyset$ by Lemma \ref{gg} and $h=h^1(\E^*)=h^1(\I_{Z(\sigma)/X})$ by Lemma \ref{numcc}(c). On the other hand, as $H^1(\O_X)=0$, we have the exact sequence 
$$0 \to H^0(\O_X) \to H^0(\O_{Z(\sigma)}) \to H^1(\I_{Z(\sigma)/X})\to 0$$
showing that $h^0(\O_{Z(\sigma)})=h+1$, that is (i). Now (ii) follows from Lemma \ref{numcc}(b). Vice versa, given a globally generated bundle $\tilde \E$ as in (ii), then $Z(\sigma)$ has $h+1$ connected components by Lemma \ref{numcc}(a).
\end{proof}

We also have the following.

\begin{lemma}
\label{conn3}
Let $\E$ be a rank $2$ globally generated bundle on $X \subset \P^N$ with $c_2(\E) \ne 0$ and $n \ge 3$. Let $\sigma \in H^0(\E)$ be a general section and set $Z=Z(\sigma)$. Then: 
\begin{itemize}
\item [(i)] If $c_1(\E)^3 \ne 0$ and $H^1(\E^*)=0$, then $Z$ is connected. 
\item [(ii)] Let $Y$ be as in Lemma \ref{zeta1} and assume that $Y$ is smooth (which holds, for example, if $n \le 3$ or if $c_2(\E)^2 = 0$ by Porteous's formula), $h^0(\O_Y(Z))=2$, $c_1(\E)^3 \ne 0, H^0(\E^*)=0$ and $H^1(\O_X)=0$. Then $Z$ is connected.
\item [(iii)] If $n \ge 4$, $\E$ is Ulrich and $c_1(\E)^3=0$, then $Z$ is disconnected. 
\item [(iv)] If $n \ge 4, c_3(\E)=0$, $\E$ is Ulrich and $X$ is subcanonical, then $Z$ is connected unless $(X, \O_X(1), \E)=(\P^2 \times \P^2, \O_{\P^2}(1) \boxtimes \O_{\P^2}(1), \pi^*(\O_{\P^2}(2))^{\oplus 2})$, where $\pi : \P^2 \times \P^2 \to \P^2$ is a projection, and, in the latter case, $Z$ has exactly $4$ connected components. 
\end{itemize}
\end{lemma}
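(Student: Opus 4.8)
The plan is to reduce each part to results already in place, exploiting that $Z=Z(\sigma)$ is the rank-two degeneracy locus $D_0(\sigma)=D_{r-2}(\varphi)$ with $\varphi=\sigma\colon\O_X\to\E$. For general $\sigma$ it is nonempty of pure codimension $2$ by Lemma \ref{gg} (since $c_2(\E)\ne0$) and smooth, hence normal, by Lemma \ref{red}; so it is an admissible $Z$ for Lemma \ref{h1=0}, and, when $\E$ is Ulrich, a general Ulrich subvariety in the sense of Definition \ref{usv}. Throughout I write $\det\E=\O_X(D)$ and use the rank-two identification $\E^*\cong\E(-D)$, so that $H^1(\E^*)=H^1(\E(-D))$.

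For (i), I would simply invoke Lemma \ref{h1=0}(i): its hypotheses are $c_1(\E)^3\ne0$ and $H^1(\E(-D))=0$, and the latter is exactly $H^1(\E^*)=0$ by the above identification. Concretely, the Koszul sequence $0\to\O_X(-D)\to\E^*\to\I_{Z/X}\to0$ together with $H^2(\O_X(-D))=0$ (which follows from $c_1(\E)^3\ne0$ via Lemma \ref{c3no}(ii)) gives $H^1(\I_{Z/X})=0$, whence $h^0(\O_Z)=1$ from $0\to\I_{Z/X}\to\O_X\to\O_Z\to0$. Note that $H^1(\O_X)=0$ is not needed here.

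For (ii), the point is to deduce $H^1(\E^*)=0$ from the stated hypotheses and then fall back on (i). Since $Y$ is smooth, Lemma \ref{zeta1} supplies the exact sequence $0\to\E^*\to\O_X^{\oplus2}\to\O_Y(Z)\to0$. Taking cohomology and using $H^0(\E^*)=0$, $h^0(\O_X^{\oplus2})=2$, $h^0(\O_Y(Z))=2$, the map $H^0(\O_X^{\oplus2})\to H^0(\O_Y(Z))$ is an injection between two-dimensional spaces, hence an isomorphism; as $H^1(\O_X)=0$ kills $H^1(\O_X^{\oplus2})$, the connecting sequence forces $H^1(\E^*)=0$. With $c_1(\E)^3\ne0$ already assumed, part (i) then yields connectedness of $Z$.

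For (iii) and (iv), I would specialize the Ulrich statements proved earlier to $r=2$. Part (iii) is precisely the content of Remark \ref{718}: under $c_2(\E)\ne0$, $c_1(\E)^3=0$, $n\ge4$ and $\E$ Ulrich, every Ulrich subvariety, in particular our general $Z$, is disconnected. Part (iv) is the $r=2$ case of Remark \ref{numcc2}: there all Ulrich subvarieties have exactly $r-1$ connected components, i.e. are connected when $r=2$, unless $(X,\O_X(1),\E)=(\P^2\times\P^2,\O_{\P^2}(1)\boxtimes\O_{\P^2}(1),\pi^*(\O_{\P^2}(2))^{\oplus2})$, in which case the number is $2r(r-1)=4$; since all reduced pure-codimension-two degeneracy loci share the same number of components by Proposition \ref{conn}, this count transfers to $Z$. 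The main thing to watch is bookkeeping rather than a genuine obstacle: at each step one must check that $Z(\sigma)$ for general $\sigma$ really satisfies the running hypotheses of the cited results (normality and pure codimension two, and in (iii),(iv) the Ulrich-subvariety conditions of Definition \ref{usv}), which is guaranteed by Lemmas \ref{gg} and \ref{red}, and, in (ii), that the sequence of Lemma \ref{zeta1} is available, which is exactly why smoothness of $Y$ is assumed.
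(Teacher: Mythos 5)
Your proposal is correct and follows essentially the same route as the paper: (i) is Lemma \ref{h1=0}(i) via the rank-two identification $\E^*\cong\E(-D)$, (ii) reduces to (i) by extracting $H^1(\E^*)=0$ from the cohomology of the sequence of Lemma \ref{zeta1}, and (iii), (iv) are Remarks \ref{718} and \ref{numcc2} respectively. Your write-up merely makes explicit the cohomology computation in (ii) that the paper leaves implicit.
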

\begin{proof}
If we set $\det \E = \O_X(D)$, then, since $\E$ has rank $2$, we have that $\E(-D) \cong \E^*$. Now (i) follows from Lemma \ref{h1=0}(i). Under hypothesis (ii), the exact sequence  \eqref{incl2} becomes
$$0 \to \E^* \to \O_X^{\oplus 2} \to \O_Y(Z) \to 0$$
and it follows that $H^1( \E^*)=0$, so that $Z$ is connected by (i). This proves (ii). (iii) is just Remark \ref{718}. Finally, (iv) follows from Remark \ref{numcc2}.
\end{proof}
Note that (iii) and (iv) above are not in conflict, even though if $c_1(\E)^3=0$, then $c_3(\E)=0$ by Lemma \ref{c3no}(i). In fact, as proved in Remark \ref{numcc2}, if $c_1(\E)^3=0$ and $X$ is subcanonical, then $(X, \O_X(1), \E)=(\P^2 \times \P^2, \O_{\P^2}(1) \boxtimes \O_{\P^2}(1), \pi^*(\O_{\P^2}(2))^{\oplus 2})$.

On the other hand, when $c_1(\E)^3 \ne 0$, we have that $Z$ can be both connected (for example on a linear Ulrich triple over a threefold) or disconnected (even when $c_1(\E)^n > 0$), as in Example \ref{ese}.

\end{document}